\tikzset{dummy/.style= {circle,fill,draw,inner sep=0pt,minimum size=1.2mm}}
\tikzset{vertex/.style={fill, circle, minimum size=.1cm, inner sep=0pt}}
\def\tikzcd@sep#1#2#3{
  \pgfkeysifdefined{/tikz/commutative diagrams/#1 sep/#2}%
    {\pgfkeysalso{/tikz/#1 sep={\ifx\\#3\\1*\else1.7*\fi\pgfkeysvalueof{/tikz/commutative diagrams/#1 sep/#2},#3}}}%
    {\pgfkeysalso{/tikz/#1 sep={#2,#3}}}}
\numberwithin{equation}{section} 
\numberwithin{figure}{section}
\newcommand{\newrefformat}[2]{}
\crefname{lemma}{Lemma}{Lemmas}
\crefname{theorem}{Theorem}{Theorems}
\crefname{definition}{Definition}{Definitions}
\crefname{proposition}{Proposition}{Propositions}
\crefname{remark}{Remark}{Remarks}
\crefname{corollary}{Corollary}{Corollaries}
\crefname{equation}{Equation}{Equations}
\crefname{construction}{Construction}{Constructions}
\crefname{ex}{Example}{Examples}
\crefname{appsec}{Appendix}{Appendices}
\crefname{subsection}{Subsection}{Subsections}
\theoremstyle{plain}
\newtheorem{theorem}[equation]{Theorem}
\newtheorem{corollary}[equation]{Corollary}
\newtheorem{proposition}[equation]{Proposition}
\newtheorem{lemma}[equation]{Lemma}
\newtheorem{introtheorem}{Theorem}
\crefname{introtheorem}{Theorem}{Theorems}
\theoremstyle{definition}
\newtheorem{definition}[equation]{Definition}
\newtheorem{example}[equation]{Example}
\newtheorem{remark}[equation]{Remark}
\newtheorem*{notation}{Notation}
\newcommand{\RR}{\mathbb{R}}
\newcommand{\ZZ}{\mathbb{Z}}
\newcommand{\CC}{\mathbb{C}}
\newcommand{\abs}[1]{\lvert#1\rvert}
\newcommand{\dcat}[1]{\mathbb{ #1}}
\newcommand{\cat}[1]{\mathscr{#1}}
\newcommand{\Cat}{\mathrm{Cat}}
\newcommand{\Seg}{\mathrm{Seg}}
\newcommand{\Fin}{\mathrm{Fin}}
\newcommand{\Set}{\mathrm{Set}}
\newcommand{\Fun}{\operatorname{Fun}}
\renewcommand{\hom}{\operatorname{Hom}}
\newcommand{\Sq}{\operatorname{Sq}}
\newcommand{\id}{\operatorname{id}}
\newcommand{\ob}{\operatorname{ob}}
\newcommand{\op}{\operatorname{op}}
\newcommand{\quot}{\twoheadrightarrow}
\newcommand{\cof}{\rightarrowtail}
\renewcommand{\phi}{\varphi}
\newcommand{\bndry}{\partial}
\newcommand{\tinysquare}
{~\begin{tikzpicture}[scale=0.35]
    \fill (0,0) circle (3pt);
    \fill (1,0) circle (3pt);
    \fill (1,1) circle (3pt);
    \fill (0,1) circle (3pt);
    
    \draw (0.2,1) --(0.8,1);
    \draw (0.2,0) --(0.8,0);
    \draw (0,0.2) --(0,0.8);
    \draw (1,0.2) --(1,0.8);

    \draw (0.35,0.35) --(0.35,0.65);
    \draw (0.35,0.35) --(0.65,0.35);
    \draw (0.35,0.65) --(0.65,0.65);
    \draw (0.65,0.35) --(0.65,0.65);
\end{tikzpicture}~}
\newcommand{\tinyspan}{~\begin{tikzpicture}[scale=0.35]
    \fill (0,0) circle (3pt);
    \fill (1,1) circle (3pt);
    \fill (0,1) circle (3pt);
    
    \draw (0.2,1) --(0.8,1);
    \draw (0,0.2) --(0,0.8);
\end{tikzpicture}~}
\newcommand{\tinycospan}{~\begin{tikzpicture}[scale=0.35]
    \fill (1,0) circle (3pt);
    \fill (0,0) circle (3pt);
    \fill (1,1) circle (3pt);
    
    \draw (0.2,0) --(0.8,0);
    \draw (1,0.2) --(1,0.8);
\end{tikzpicture}~}
\newcommand{\mcnote}[1]{\todo[color=red!40,linecolor=red!40!black,size=\tiny]{MC: #1}}
\newcommand{\mcnoteil}[1]{\ \todo[inline,color=red!40,linecolor=red!40!black,size=\normalsize]{MC: #1}}
\title{Squares $K$-theory and 2-Segal spaces}
\author[M.E.\ Calle and M.\ Sarazola]{Maxine E.\ Calle and Maru Sarazola}
\address{Department of Mathematics, University of Pennsylvania, Philadelphia, PA 19104}
\email{callem@sas.upenn.edu}
\address{School of Mathematics, University of Minnesota, Minneapolis MN, 55455}
\email{maru@umn.edu}
\date{\today}
\keywords{2-Segal spaces, squares K-theory, Waldhausen $S_\bullet$-construction, double categories}
\subjclass[2020]{19D99, 
18N10, 
55U10, 
18G90. 
}
\begin{document}

\begin{abstract}
We define an $S_\bullet$-construction for squares categories, and introduce a class of squares categories we call \textit{proto-Waldhausen} which capture the properties required for the $S_\bullet$-construction to model the $K$-theory space. 
The primary question we investigate is when the $S_\bullet$-construction of a squares category produces a $2$-Segal space. We show that the answer to this question is affirmative when the squares category satisfies certain ``stability'' conditions. 
\end{abstract}

\maketitle

\tableofcontents

\section{Introduction}

Given a category $\cat C$, its nerve $N_*\cat C$ is a simplicial set whose $n$-simplices are length-$n$ sequences of composable morphisms. The nerve has the extra structure of a \textit{$1$-Segal set}, meaning that the \textit{Segal maps}\[
N_n\cat C\rightarrow N_1\cat C\times_{N_0\cat C}\dots \times_{N_0\cat C} N_1\cat C
\] which send an $n$-simplex to its spine are isomorphisms for $n\geq 2$. In fact, this property completely characterizes nerves: a simplicial set is $1$-Segal if and only if it is isomorphic to the nerve of some category. In particular, any $1$-Segal set $X$ has a composition map given by\[
X_1\times_{X_0} X_1\xleftarrow{\cong} X_2 \xrightarrow{d_1} X_1
\] and relationships between higher simplicies encode the fact that this composition is associative.

One can consider a higher-dimensional version of the Segal condition on a simplicial set $X$, by instead requiring that the maps
\[
X_n\rightarrow X_2\times_{X_1}\dots \times_{X_1} X_2
\]
be isomorphisms for all $n\geq 3$. This gives rise to the notion of \textit{$2$-Segal sets} (also known as \textit{decomposition spaces}), introduced by Dyckerhoff and Kapranov \cite{dyckerhoff/kapranov:19} and G\'alvez-Carrillo, Kock, and Tonks \cite{GCKT:18}. A $2$-Segal set may not have a well-defined composition, but in some sense remembers the higher-dimensional data of associativity. This structure can be thought of as an associative multi-valued composition (as explained in \cite{BOORS:2016}) or as an $A_\infty$-algebra structure in a category of spans of sets \cite{Stern:19}. In fact, this notion can be defined more generally in the setting of simplicial \emph{objects} in any category $\mathcal{C}$. When $\mathcal{C}$ is the category of topological spaces, the natural requirement is for the Segal maps to be weak homotopy equivalences, and we obtain the notion of \emph{2-Segal space}.

Just like all $1$-Segal sets can be obtained as nerves, $2$-Segal sets are the natural output of the $S_\bullet$-construction, which was introduced by Waldhausen \cite{waldhausen:1983} in the context of higher algebraic $K$-theory. Indeed, in \cite{BOORS:2016}, Bergner, Osorno, Ozornova, Rovelli, and Scheimbauer show that every $2$-Segal set arises from a version of the $S_\bullet$-construction, by replacing the classical $K$-theoretical inputs with a different categorical structure called \textit{stable augmented double categories}. A similar result also holds for the case of 2-Segal spaces, where it was long understood that the $S_\bullet$-construction of certain kinds of input categories, such as the \textit{proto-exact categories} of \cite{dyckerhoff/kapranov:19}, has the structure of a $2$-Segal space. Although not every Waldhausen category produces a $2$-Segal space, every $2$-Segal space can be obtained as the $S_\bullet$-construction of a topological generalization of stable augmented double categories \cite{BOORS:2021}.

In this paper, we continue the study of the strong connection between $2$-Segal objects and higher algebraic $K$-theory. The stable augmented double categories of \cite{BOORS:2016} are examples of  \textit{squares categories}, a framework introduced in \cite{CKMZ:squares}
 to carry out new investigations
related to the $K$-theory of varieties \cite{campbell, camzak:24} and scissors congruence problems
\cite{zakharevich:12, malkiewich, hoekzema/merling/murray/rovi/semikina:2021, camzak:22}. The central question of this paper is: when does the $K$-theory construction of a squares category produce a Segal object? 

The $K$-theory of a squares category is given by a double-nerve construction, also called the $T_\bullet$-construction. The name is inspired by the \textit{Thomason construction} for Waldhausen categories (see \cite[Section 1.3]{waldhausen:1983}) which provides another model for the $K$-theory of a Waldhausen category equivalent to the one constructed via $S_\bullet$. Although $T_\bullet \dcat C$ describes a $1$-Segal object in $\Cat$ (see \cref{rmk:Tdot1segalincat}), in most cases it does not produce a $2$-Segal space.

In this paper, we introduce a $S_\bullet$-construction for squares categories, denoted $S^{\square}_\bullet$, which can produce $2$-Segal spaces. Our first result is give sufficient conditions for the $S^{\square}_\bullet$-construction to coincide with the $T_\bullet$-construction.

\begin{introtheorem}[\cref{thm:square and S comparison} and \cref{rmk:WITissquares}]
    There is a functor\[
    S^{\square}_\bullet \colon 
    \mathrm{SqCat}\rightarrow\mathrm{sCat}
    \] from the category of squares categories to the category of simplicial categories such that:\begin{itemize}
        \item[(a)] When the input is a proto-Waldhausen category in the sense of \cref{defn:protoWald cat}, then there is an equivalence of spaces \[
        \Omega\abs{S^{\square}_\bullet\dcat C} \simeq K^{\square}(\dcat C)
        \] with the square $K$-theory defined in \cite{CKMZ:squares}.
        \item[(b)] When the input is a pointed stable double category in the sense of \cite{BOORS:2016}, the composition of functors\[
     \mathrm{SqCat}\xrightarrow{S^{\square}_\bullet} \mathrm{sCat} \xrightarrow{\ob} \mathrm{sSet}
        \]is the $S_\bullet$-construction of \cite{BOORS:2016}.
    \end{itemize}
\end{introtheorem}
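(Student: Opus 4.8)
The plan is to prove the three assertions in turn: the construction and functoriality of $S^{\square}_\bullet$, the comparison (a) with square $K$-theory, and the comparison (b) with the construction of \cite{BOORS:2016}. To construct the functor, I would follow the template of Waldhausen's $S_\bullet$: set $S^{\square}_n\dcat C$ to be the category of diagrams in $\dcat C$ indexed by the arrow poset $\Ar[n]=\sets{(i,j)}{0\le i\le j\le n}$, where the two classes of morphisms of $\dcat C$ realize the inclusion and quotient directions of the grid, subject to the conditions that every diagonal entry $(i,i)$ is a chosen zero object and that every elementary square of the diagram is a distinguished square of $\dcat C$; morphisms of $S^{\square}_n\dcat C$ are the natural transformations valued in the appropriate distinguished class. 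The cosimplicial structure of $[n]\mapsto\Ar[n]$ makes $n\mapsto S^{\square}_n\dcat C$ a simplicial category, and postcomposition with a squares functor supplies functoriality in $\dcat C$, yielding $S^{\square}_\bullet\colon\mathrm{SqCat}\to\mathrm{sCat}$. This step is essentially formal once the indexing data and the distinguished-square condition are fixed.

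For part (a), I would recall that $K^{\square}(\dcat C)$ is defined in \cite{CKMZ:squares} through the $T_\bullet$-construction, so the task becomes comparing the $S^{\square}_\bullet$-model with the $T_\bullet$-model after applying $\Omega\abs{-}$. The plan is to produce a forgetful functor from $S^{\square}_n\dcat C$ to the underlying sequence of composable morphisms in the inclusion direction (the squares analog of the top-row flag of cofibrations $A_{0,1}\to\cdots\to A_{0,n}$), dropping the chosen fillings, and then argue that the axioms of \cref{defn:protoWald cat} make the fibers of this functor — the categories of choices of distinguished squares completing a prescribed boundary — contractible. Combined with the identification of the underlying-sequence data with $T_\bullet\dcat C$ (this being the squares analog of the classical Thomason-versus-$S_\bullet$ comparison), and threading through the nerve, geometric realization, and loop functors, this would give the equivalence $\Omega\abs{S^{\square}_\bullet\dcat C}\simeq K^{\square}(\dcat C)$.

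For part (b), the argument is a direct matching of definitions. For a pointed stable double category $\dcat C$ in the sense of \cite{BOORS:2016}, I would unwind $\ob(S^{\square}_n\dcat C)$ and verify that an object is precisely an $n$-simplex of their $S_\bullet$-construction: the grid of distinguished squares with vanishing diagonal is exactly their indexing datum, and the stability hypothesis forces the relevant squares to be simultaneously cartesian and cocartesian, matching their convention. Checking that this bijection is natural in $[n]$ and in $\dcat C$ then identifies the two simplicial sets, so that $\ob\circ S^{\square}_\bullet$ recovers their construction on the nose.

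The main obstacle is the contractibility of the choice fibers in part (a). In contrast to the set-level identification of part (b), this requires a genuinely homotopical argument: one must show that the proto-Waldhausen axioms guarantee enough existence and essential uniqueness of cofibers that the fiber categories are not merely connected but contractible. This is the squares analog of Waldhausen's delicate comparison between $S_\bullet$ and the iterated nerve, and it is precisely where the exact formulation of \cref{defn:protoWald cat} must do the real work.
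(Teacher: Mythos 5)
Your construction of $S^{\square}_\bullet$ and your part (b) essentially coincide with the paper's: $S^{\square}_n\dcat C$ is the category of $\Ar[n]$-shaped diagrams of distinguished squares with $O$ on the diagonal, and part (b) is a direct definitional match with the construction of \cite{BOORS:2016} (two small caveats: the paper must, and does, commit to \emph{vertical} natural transformations as the morphisms, since there is no ambient $1$-category in which to speak of plain naturality; and in a stable double category there is no ambient notion of cartesian or cocartesian square --- stability means unique completion of spans and cospans).

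The genuine gap is in part (a), at the step you dismiss parenthetically as ``the identification of the underlying-sequence data with $T_\bullet\dcat C$.'' That identification is false as stated, and it is exactly where the theorem lives. The top row of an object of $S^{\square}_n\dcat C$ is a flag $O\cof A_{01}\cof\cdots\cof A_{0n}$ \emph{starting at the basepoint}, and every morphism of $S^{\square}_n\dcat C$ restricts on top rows to a transformation whose components are automatically vertical weak equivalences (\cref{rmk:weakequivsforfree}). By contrast, $T_n\dcat C$ has \emph{arbitrary} flags $C_0\cof\cdots\cof C_n$ as objects and \emph{arbitrary} vertical transformations as morphisms, so your forgetful functor does not land in $T_n\dcat C$, and the category it does land in is not equivalent to $T_n\dcat C$. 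Worse, the simplicial structures disagree: $d_0$ on $S^{\square}_\bullet\dcat C$ deletes a row and column, which on top rows is a quotient (span-completion) operation depending on the discarded cofiber data, not the nerve-type face of $T_\bullet\dcat C$ that merely forgets $C_0$. Consequently there is no simplicial functor in either direction between $S^{\square}_\bullet\dcat C$ and $T_\bullet\dcat C$ realizing the equivalence, and levelwise fiber-contractibility arguments have nothing to assemble along.

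The missing idea is an intermediate object: the paper introduces the simplicial category $T^+_\bullet\dcat C$ of staircase diagrams whose top row is an arbitrary flag $C_0\cof\cdots\cof C_n$ equipped with cofiber data below it, together with the two \emph{simplicial} forgetful maps $T_\bullet\dcat C\leftarrow T^+_\bullet\dcat C\to S^{\square}_\bullet\dcat C$ (keep only the top row; delete the top row). Each leg is shown to be a levelwise homotopy equivalence after realization --- not via Quillen's Theorem A, but via explicit section functors (span completions from condition (i) of \cref{defn:protoWald cat} for the first leg; duplication of the second row for the other) together with natural transformations to the identity (condition (ii)), which realize to homotopies. Crucially, these sections are \emph{not} simplicial maps (span completions are only functorial up to the choices involved, cf.\ \cref{rmk:sectionsnotsimplicial}); only the forgetful directions are simplicial, and that suffices. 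Without this zigzag, or some equivalent d\'ecalage/path-space device, your outline for (a) assumes the Waldhausen--Thomason comparison rather than proving it.
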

This $S^{\square}_\bullet$-construction is also closely related to that of another double-categorical structure for $K$-theory: ECGW-categories. Every ECGW-category $\dcat C$ can be considered as a squares category $U\dcat C$, and the simplicial category $S^{\square}_\bullet U\dcat C$ defined in this paper is the underlying vertical category of the simplicial double category $S_\bullet \dcat C$ of \cite{SS-CGW}. As similar investigations are the subject of ongoing work by other authors, we do not pursue this connection further here.

Inspired by the definitions from \cite{BOORS:2016}, we then introduce an additional stability condition for squares categories (\cref{defn:stable squares cat}). The resulting \textit{stable squares categories} behave similarly to the stable (pointed) double categories of \cite{BOORS:2016}, but with weak equivalences as in a Waldhausen category. Every stable pointed double category in the sense of \cite{BOORS:2016} is a stable squares category whose weak equivalences are equalities (\cref{prop:stable dblcat is stable sqcat}). 

A stable squares category $\dcat C$ is \textit{isostable} when these weak equivalences are invertible and $\dcat C$ satisfies an additional double-categorical condition that ensures $S^{\square}_\bullet \dcat C$ is a simplicial groupoid (see \cref{defn:isostable} and \cref{lem:isostable gives gpd}). Examples include polytopes and isometries and finitely-generated projective $R$-modules and isomorphisms. Mirroring the setting for proto-exact categories \cite[Section 2.4]{dyckerhoff/kapranov:19}, we show that the $S^{\square}_\bullet$-construction of an isostable squares category produces a $2$-Segal space.

\begin{introtheorem}[\cref{thm:2Segal from squares}]
    If $\cat C$ is an isostable squares category, then $[n]\mapsto BS^{\square}_n\cat C$ is a $2$-Segal space.
\end{introtheorem}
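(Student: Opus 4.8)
The plan is to exploit the fact that, by \cref{lem:isostable gives gpd}, each $S^\square_n\cat C$ is a groupoid, so the simplicial space $[n]\mapsto BS^\square_n\cat C$ is levelwise the nerve of a groupoid. For such simplicial spaces the $2$-Segal condition becomes purely $1$-categorical: a homotopy pullback of groupoids is modeled by the iso-comma (pseudo-pullback) groupoid, and a functor of groupoids is an equivalence precisely when it is essentially surjective and fully faithful. Recall that a simplicial space is $2$-Segal exactly when, for every $n\ge 3$ and every diagonal $\{i,j\}$ with $0\le i<j\le n$ and $j\ge i+2$, cutting the polygon $\{0,\dots,n\}$ along $\{i,j\}$ yields a homotopy pullback, i.e. the map $X_n\to X_{\{0,\dots,i,j,\dots,n\}}\times^{h}_{X_{\{i,j\}}} X_{\{i,\dots,j\}}$ is a weak equivalence; by \cite{dyckerhoff/kapranov:19} it suffices to treat the lower diagonals $\{0,k\}$ and the upper diagonals $\{k,n\}$. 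So the theorem reduces to showing that, for each such diagonal, the comparison functor
\[
S^\square_n\cat C \longrightarrow S^\square_{\{0,\dots,i,j,\dots,n\}}\cat C \times^{h}_{S^\square_{\{i,j\}}\cat C} S^\square_{\{i,\dots,j\}}\cat C
\]
is an equivalence of groupoids.

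Second, I would unwind the two sides explicitly. An object of $S^\square_n\cat C$ is a staircase diagram built from the distinguished squares of $\cat C$, indexed by the pairs $0\le a\le b\le n$, with degenerate diagonal entries, mirroring the usual Waldhausen $S_\bullet$-construction. Restriction along the inclusions of the two sub-polygons determined by $\{i,j\}$ keeps the sub-staircases on the index sets $\{i,\dots,j\}$ and $\{0,\dots,i,j,\dots,n\}$, and the pullback records these two sub-staircases together with an isomorphism identifying their common entry over the edge $\{i,j\}$. The comparison functor then forgets precisely the entries strictly separated by the diagonal; for the lower diagonal $\{0,k\}$ these are the $X_{ab}$ with $1\le a\le k-1$ and $k+1\le b\le n$ (and symmetrically for the upper diagonal).

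Third, I would prove this functor is an equivalence by reconstructing the forgotten entries from the distinguished squares. Essential surjectivity amounts to filling the forgotten part of the staircase: each missing $X_{ab}$ is the value forced by a distinguished square built from the retained data, and the stability axioms of \cref{defn:stable squares cat} guarantee that such fillings exist. Fully faithfulness amounts to the fact that these fillings, and the isomorphisms between them, are determined up to unique isomorphism by the retained data --- again a consequence of stability together with the invertibility of weak equivalences built into \cref{defn:isostable}. I would organize this as an induction on the number of vertices reinserted across the diagonal, so that the whole statement reduces to a single elementary reinsertion step governed by one application of the stability axiom.

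The main obstacle is this last step: verifying that the reconstruction functor is simultaneously essentially surjective and fully faithful. Essential surjectivity is where the existence half of the stable squares axioms must be used to produce the squares filling the staircase, and fully faithfulness is where their essential uniqueness is needed; making these precise requires checking that the distinguished squares of $\cat C$ compose and cancel within the staircase in the way demanded by the $2$-Segal decomposition, and that the iso-comma description of the homotopy pullback interacts correctly with these constructions. The groupoid hypothesis from \cref{lem:isostable gives gpd} is what lets me replace ``homotopy pullback of spaces'' by the strict iso-comma groupoid throughout, keeping the entire argument $1$-categorical.
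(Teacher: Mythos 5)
Your overall strategy---reduce to simplicial groupoids via \cref{lem:isostable gives gpd}, model homotopy pullbacks of classifying spaces by strict pseudo-pullbacks (iso-commas) of groupoids, and check only the outer diagonals---is the same skeleton as the paper's proof, which uses the projective $2$-limits of \cite[Proposition 1.3.8]{dyckerhoff/kapranov:19} and the sharper reduction of \cref{rmk:check 2Segal} to just the two cuts $(0,2)$ and $(n-2,n)$. The organizational difference is that the paper never analyzes the comparison functor on $S^{\square}_n\dcat C$ directly: it first proves that a staircase is determined up to isomorphism by its top row (\cref{lem:forget cokernels}) and, dually, by its rightmost column (\cref{lem:forget kernels}), and then verifies the cut conditions for the much simpler simplicial groupoids $\cat H_\bullet\dcat C$ and $\cat V_\bullet\dcat C$ (\cref{lem:lem for H} and \cref{lem:lem for V}), transferring the conclusion back by two-out-of-three. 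The two auxiliary categories are forced on you: the top-row functor commutes strictly only with the faces retaining the vertex $0$ (so it can handle the cut $(0,2)$), while the right-column functor commutes only with the faces retaining the vertex $n$ (so it handles $(n-2,n)$).

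The genuine gap is in your third step. First, in a stable squares category fillings are \emph{not} ``forced'' or ``determined up to unique isomorphism'': unlike in the stable double categories of \cite{BOORS:2016}, a span does not determine a square; conditions (i)--(ii) of \cref{defn:stable squares cat} only provide a chosen section $s$ together with a comparison $w\colon si^*\Rightarrow\id$ whose components become invertible under isostability, so full faithfulness cannot be argued from uniqueness of fillings. Second, and more seriously, ``fill in the forgotten entries while keeping the retained ones fixed'' does not produce an object of $S^{\square}_n\dcat C$: for the lower cut $(0,k)$, span completion constructs candidate entries $A'_{a,b}$ for $1\le a\le k-1$ and $b\ge k+1$, but the axioms supply no vertical morphisms $A'_{k-1,b}\quot A_{k,b}$ into the retained entries of the other factor, nor the distinguished squares joining the reconstructed rows to the retained ones; the pullback datum only records the composite $A_{0,b}\quot A_{k,b}$, and factoring it through the new rows requires the functoriality of $s$ and the comparison $w$---in effect the entire argument of \cref{lem:forget cokernels}, which rebuilds \emph{every} row below the top one and then compares the result to the original staircase by an invertible vertical natural transformation. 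The upper cuts hide a further asymmetry your outline elides: there the comparison $u$ being a pointwise weak equivalence is an \emph{axiom} (condition (iv) of \cref{defn:stable squares cat}), not a consequence, and even then a pointwise-invertible morphism of cospans does not abstractly induce an isomorphism on cospan completions---one needs the extra condition of \cref{defn:isostable} to promote it to an isomorphism of cospans, which $t$ then preserves (see the proof of \cref{lem:forget kernels} and \cref{rmk:isostable}). Once these two lemmas are supplied, your direct argument and the paper's proof coincide in substance.
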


In particular, isostable squares categories are proto-Waldhausen, and so the $S^{\square}_\bullet$-construction provides an alternative model for their squares $K$-theory. We can then say that there is a model for the algebraic $K$-theory of a stable squares category with isomorphisms which produces a 2-Segal space. 



\subsection*{Acknowledgements} We are delighted to acknowledge that this work began at the ``Higher Segal Spaces and their Applications to Algebraic $K$-Theory, Hall Algebras, and Combinatorics'' workshop at the Banff International Research Station, where we first started to wonder about the connection between $2$-Segal spaces and squares $K$-theory. We would like to thank Julie Bergner and Cary Malkiewich for helpful conversations and feedback, as well as the anonymous referee for their comments which greatly improved the paper. The first-named author was partially supported by NSF grant DGE-1845298 and the second-named author was partially supported by NSF grant DMS-2506116.


\section{An $S_\bullet$-construction for squares categories}

Waldhausen's $S_\bullet$-construction \cite{waldhausen:1983} takes as input a \textit{Waldhausen category}, which is a pointed category with a notion of cofibration ($\cof$) and weak equivalence ($\xrightarrow{\sim}$) satisfying certain compatibility axioms. In particular, pushouts along cofibrations exist, meaning that certain spans can be completed to squares\[
\begin{tikzcd}
    A \ar[r, >->] \ar[d] & B\\
    C &
\end{tikzcd} \leadsto \begin{tikzcd}
    A \ar[r, >->] \ar[d] & B\ar[d] \\
    C \ar[r, >->] & C\cup_A B
\end{tikzcd}. 
\] Taking $C=*$ to be the zero object, this condition implies that every cofibration $A\cof B$ admits a quotient $B/A = *\cup_A B$. The $S_\bullet$-construction of a Waldhausen category $\cat C$ produces a simplicial category whose objects are sequences of cofibrations $*\cof C_1\cof \dots \cof C_n$ along with compatible choices of quotients $C_{ij}$ for each $C_i\cof C_j$. Morphisms are natural transformations that are pointwise valued in weak equivalences, and $K(\cat C)$ is defined to be the based loop space of the realization of this simplicial category. A characterizing feature of the $S_\bullet$-construction is that it splits the cofiber sequences in $\cat C$; this feature can be seen directly at the level of $K_0(\cat C)= \pi_0(K(\cat C))$ where $[B] = [A] + [B/A]$ whenever $A\cof B$, and more abstractly via Waldhausen's additivity theorem \cite{waldhausen:1983} and the universal property captured in \cite{BGT:13}.

There is another model for the $K$-theory of a Waldhausen category $\cat C$, called the \textit{Thomason construction} (see \cite[Section 1.3]{waldhausen:1983}), which is constructed by associating $\cat C$ to a certain double category whose horizontal morphisms are cofibrations, vertical morphisms are any morphisms in $\cat C$, and squares are pushouts up to weak equivalence, i.e. commutative diagrams \[
\begin{tikzcd}
    A \ar[r, >->] \ar[d] & B\ar[d] \\
    C \ar[r, >->] & D
\end{tikzcd}
\] such that the induced map $C\cup_A B\to D$ is a weak equivalence. In the Thomason construction, the three term relation $[B] = [A] + [B/A]$ is replaced with a four term relation $[D] = [B] + [C] - [A]$. 

The idea of using a $K$-theory construction that encodes four term relations (or an inclusion-exclusion principle more generally) is the impetus for the introduction of \textit{squares categories} \cite{CKMZ:squares} and their $K$-theory. A squares category is a type of double category, meaning that there are two different kinds of morphisms, horizontal ($\cof$) and vertical ($\quot$), as well as squares \[\begin{tikzcd}
    A \ar[r, >->] \ar[d, ->>] \ar[rd, phantom, "\square"] & B\ar[d, ->>] \\
    C \ar[r, >->] & D
\end{tikzcd}
\]which are $2$-cells that encode the interaction between the two different kinds of morphisms. The $K$-theory of a squares category is the loop space of the realization of its double nerve, based at some distinguished object $O$. Under mild assumptions, the connected components of this space naturally encode the four-term relation $[D] = [B] + [C] - [A]$, just as in the Thomason construction. 

Four-term relations arise naturally in many examples of interest in scissors congruence $K$-theory, such as total scissors congruence of polytopes, cut-and-paste groups of manifolds \cite{hoekzema/merling/murray/rovi/semikina:2021}, and certain versions of the Grothendieck spectrum of varieties. Many cases of interest arise from an ambient $1$-category $\cat C$ by selecting certain maps in $\cat C$ to go in the horizontal and vertical directions and specifying the data of certain squares; this notion is called the \textit{squares category generated by $\cat C$} in \cite{CKMZ:squares} and a \textit{category with squares} in \cite{hoekzema/merling/murray/rovi/semikina:2021}. One benefit of the formalism of double categories is to account for other examples (e.g. when the vertical morphisms are a subset of morphisms in $\cat C^{\op}$).

In this section, we define a version of $S_\bullet$-construction for squares categories (\cref{defn:S-dot of cat w squares}), which we denote $S^{\square}_\bullet$. Our definition is a double-categorical version of  Waldhausen's construction, but also encompasses Campbell's variation for subtractive Waldhausen categories \cite{campbell}. In general, the $S^{\square}_\bullet$-construction of a squares category will not model its $K^{\square}$-theory, unless there is an underlying ``three-term''-ness; we identify sufficient conditions for the $K^{\square}$-theory of a squares category to be modeled by the $S^{\square}_\bullet$-construction, following the Waldhausen-Thomason comparison (\cref{thm:square and S comparison}). These kinds of squares categories, which we call \textit{pseudo-Waldhausen categories} (\cref{defn:protoWald cat}), are the basis for the stable squares categories that we introduce in \cref{sec:segal connection}.

\subsection{Squares categories}
We first recall the definition of a squares category, along with the corresponding $K$-theory construction. More details, as well as examples, can be found in \cite{CKMZ:squares}. Squares categories rely on the notion of a double category, so we start with a brief recollection of these 2-dimensional structures. However, we will assume a certain familiarity with double categories, and direct the reader to \cite[Chapter 3]{grandis} for detailed definitions.

\begin{definition}
    A \emph{double category} is an internal category to $\Cat$, the category of small categories and functors.  More explicitly, a double category $\mathbb{A}$ consists of objects $A, B, A', B',\dots$, horizontal morphisms $A\cof B$, vertical morphisms $A\quot A'$, and squares
    \[
    \begin{tikzcd}
        A \ar[r, >->] \ar[d, ->>] \ar[rd, phantom, "\Downarrow"]& B \ar[d, ->>]\\
        A' \ar[r, >->]& B'
    \end{tikzcd}
    \] together with associative and unital compositions for horizontal morphisms, vertical morphisms, and squares. A \textit{double functor} $F \colon\mathbb{A}\to\mathbb{B}$ is an assignment on objects, horizontal morphisms, vertical morphisms, and squares that preserves all compositions and identities strictly.
\end{definition}

Note that each double category $\dcat A$ has an \emph{underlying horizontal category} $\cat H_{\dcat A}$ obtained by considering the objects and horizontal morphisms in $\dcat A$, and an \emph{underlying vertical category} $\cat V_{\dcat A}$ obtained by considering the objects and vertical morphisms in $\dcat A$.

\begin{definition}
    A double category is \emph{flat} if the squares are uniquely determined by their boundary.
\end{definition}

\begin{notation}
    Since all double categories in this paper are flat, we will use the symbol
    \[
    \begin{tikzcd}
        A \ar[r, >->] \ar[d, ->>]\ar[rd, phantom, "\square"] & B \ar[d, ->>]\\
        A' \ar[r, >->]& B'
    \end{tikzcd}
    \] to indicate that the given boundary determines a square in the double category. This is in line with the fact that being a square is property rather than data, and is the notation used in \cite{CKMZ:squares}.
\end{notation}

We now recall the notions of horizontal and vertical natural transformations, specialized to the case of flat double categories. A general definition requires additional coherences which are automatic in the flat case, and can be found in \cite[\S 3.2.7]{grandis}.

\begin{definition}\label{def:naturaltr}
Let $F,G\colon\dcat A\to\dcat B$ be two double functors between flat double categories. A \emph{vertical natural transformation} $\tau\colon F\Rightarrow G$ is the data of a vertical morphism  $\tau_A\colon F(A)\twoheadrightarrow G(A)$ in $\dcat B$ for each object $A\in\dcat A$, such that for each horizontal morphism $f\colon A\cof A'$ in $\dcat A$ the boundary below left determines a square in $\dcat B$,
\[
    \begin{tikzcd}
        F(A) \ar[r, >->,"Ff"] \ar[d, ->>,"\tau_A"']\ar[rd, phantom, "\square"] & F(A') \ar[d, ->>,"\tau_{A'}"]\\
        G(A) \ar[r, >->,"Gf"']& G(A)'
    \end{tikzcd}
    \hspace{3cm}
     \begin{tikzcd}
        F(A) \ar[r, ->>,"Fu"] \ar[d, ->>,"\tau_A"'] & F(A') \ar[d, ->>,"\tau_{A'}"]\\
        G(A) \ar[r, ->>,"Gu"']& G(A)'
    \end{tikzcd}
    \]
and such that for each vertical morphism $u\colon A\twoheadrightarrow A'$ in $\dcat A$ the diagram above right commutes in the underlying vertical category of $\dcat B$.

We denote by $\Fun^v(\dcat A,\dcat B)$ the category of double functors from $\dcat A$ to $\dcat B$ and vertical natural transformations. Dually, one can define a \emph{horizontal natural transformation}, and obtain a category $\Fun^h(\dcat A,\dcat B)$.
\end{definition}

\begin{definition}
    A double category $\dcat C$ is \textit{pointed} if there is a distinguished object $O$ which is initial in $\cat H_{\dcat C}$ and terminal in $\cat V_{\dcat C}$. A morphism of pointed double categories is a double functor that preserves the distinguished object.
\end{definition}

We can now recall the notion of a squares category. 

\begin{definition}\label{defn:squarescat}
    A \emph{squares category} is a flat, pointed double category and a functor of squares categories is a pointed double functor.
\end{definition}

\begin{remark}\label{rmk:O choices}
    The reader familiar with the original definition introduced in \cite{CKMZ:squares} may recall that they require the basepoint $O$ to be initial in both the horizontal and vertical directions. The two definitions of squares categories are not equivalent categorically, but they are in a $K$-theoretical sense. Indeed, as we will see below, the $K$-theory space of a squares category is obtained through a nerve construction which is invariant with respect to reversing the direction of the horizontal or vertical maps. Hence, any squares category in the sense of \cite{CKMZ:squares} can be turned into a squares category in the sense of \cref{defn:squarescat} by taking the opposite vertical category (and vice versa), and both structures will produce the same $K$-theory. 
\end{remark}

\begin{definition}
    Given a squares category $\dcat C$, we denote by $T_n \dcat C$ the category whose objects are sequences of composable horizontal morphisms in $\dcat C$
    \[C_{0}\cof C_{1}\cof C_{2}\cof \dots\cof C_{n}\]
    and whose morphisms are sequences of horizontally composable squares in $\dcat C$ as follows
    \[\begin{tikzcd}
        C_{0}\rar[>->] \dar[->>]\ar[rd, phantom, "\square"]  & C_{1}\rar[>->] \dar[->>]\ar[rd, phantom, "\square"]  & C_{2}\rar[>->] \dar[->>] \ar[rd, phantom, "\square"]& \cdots  \rar[>->]\ar[rd, phantom, "\square"]& C_{n}\dar[->>]\\
        C'_{0}\rar[>->]  & C'_{1}\rar[>->]& C'_{2}\rar[>->]& \cdots  \rar[>->]& C'_{n}
    \end{tikzcd}.\]
    These assemble into a simplicial category $T_\bullet\dcat C$, where all faces and degeneracies behave like the ones in a nerve construction.
\end{definition}

\begin{definition}\label{defn:T-dot}
The \textit{$K$-theory space} of a squares category $\dcat C$ is \[
K^\square(\dcat C) = \Omega_O \abs{T_\bullet(\dcat C)}
,
\] and the \textit{$K$-groups} of $\dcat C$ are the homotopy groups of $K^\square(\dcat C)$ \[K_i^\square (\dcat C) = \pi_i(K^\square(\dcat C)).\]
\end{definition}

\begin{remark}
The construction of $T_\bullet\dcat C$ presented above is precisely the \emph{horizontal nerve} of the underlying double category $\dcat C$. In \cite{CKMZ:squares}, the authors define the $K$-theory space of a squares category using the \emph{double nerve} of $\dcat C$ instead: the bisimplicial set whose $(m,n)$-simplices are grids of squares in $\dcat C$ of the form \[
 \begin{tikzcd}
 C_{00} \ar[r, >->] \ar[d, ->>] \ar[rd, phantom, "\square"] & \dots \ar[r, >->] & C_{0n} \ar[d, ->>]\\
 \vdots \ar[d, ->>] & \ddots \ar[rd, phantom, "\square"] & \vdots\ar[d, ->>]\\
 C_{m0} \ar[r, >->]& \dots\ar[r, >->] & C_{mn}
 \end{tikzcd}
 \] All nerves of double categories produce the same space up to homeomorphism, so these constructions are equivalent at the $K$-theory level; our choice to present the space via a simplicial category instead of a bisimplicial set is motivated by our ultimate goal of comparing it to an $S_\bullet$-construction.
\end{remark}

\begin{remark}\label{rmk:Tdot1segalincat}
Note that the simplicial category $[n]\mapsto T_n\dcat C$ is a $1$-Segal object in $\Cat$ with the canonical model structure. However, the simplicial space $[n]\mapsto BT_n\dcat C$ may not be $1$-Segal as taking classifying spaces will generally not preserve homotopy pullbacks. Additionally, we note that the double nerve is neither a $1$-Segal set nor a $2$-Segal set in general.
\end{remark}

Under an additional condition we recover the expected description of $K_0^\square(\dcat C)$ as a Grothendieck group.

\begin{theorem}[{\cite[Theorem 3.1]{CKMZ:squares}}]
    Suppose a squares category $\dcat C$ is such that for any two objects $A,B$ there is an object $X$ and squares \[
    \begin{tikzcd}
        A\ar[r, >->]\ar[d, ->>] \ar[rd, phantom, "\square"] & X \ar[d, ->>] \\
        O \ar[r, >->] & B
    \end{tikzcd} ~\text{ and }~ \begin{tikzcd}
        B\ar[r, >->]\ar[d, ->>] \ar[rd, phantom, "\square"] & X \ar[d, ->>] \\
        O \ar[r, >->] & A
    \end{tikzcd}.
    \] Then $K_0^{\square}(\dcat C)\cong \ZZ[\ob \dcat C]/\sim$ where $[O]\sim 0$ and $[A] + [D] = [B] + [C]$ for every square\[
    \begin{tikzcd}
        A\ar[r, >->]\ar[d, ->>] \ar[rd, phantom, "\square"] & B \ar[d, ->>] \\
        C \ar[r, >->] & D
    \end{tikzcd}.
    \]
\end{theorem}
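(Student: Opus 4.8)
The plan is to compute $K_0^\square(\dcat C)$ directly as a fundamental group. Since $K^\square(\dcat C) = \Omega_O\abs{T_\bullet\dcat C}$, we have $K_0^\square(\dcat C) = \pi_0(K^\square(\dcat C)) = \pi_1(\abs{T_\bullet\dcat C}, O)$, and I would extract a cellular presentation of this group by viewing the realization as the geometric realization of the bisimplicial set $N_\bullet T_\bullet\dcat C$, whose nondegenerate cells are indexed by nondegenerate bisimplices. The $0$-cells are the objects of $\dcat C$; the $1$-cells are the nonidentity vertical morphisms (from $N_1 T_0$) together with the nonidentity horizontal morphisms (from $N_0 T_1$); and the relevant $2$-cells come from composable pairs of vertical maps (type $(2,0)$), composable pairs of horizontal maps (type $(0,2)$), and squares (type $(1,1)$).

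The first key step is to collapse the contractible subcomplex $BT_0\dcat C$. Because $O$ is terminal in the vertical category $\cat V_{\dcat C} = T_0\dcat C$, the classifying space $BT_0\dcat C$ is contractible, and it is a subcomplex of $\abs{T_\bullet\dcat C}$ (the bisimplices of $n$-degree $0$ are closed under all face and degeneracy maps). Collapsing it is a homotopy equivalence which identifies every object to $O$, trivializes every vertical edge, and absorbs all $(2,0)$-cells. What remains is a presentation of $\pi_1$ whose generators are the horizontal edges $\bar f$ (now loops at $O$) for $f\colon C_0\cof C_1$, subject only to the relations coming from the surviving $2$-cells: each $(0,2)$-cell $C_0\cof C_1\cof C_2$ yields $\overline{gf} = \bar g\,\bar f$, and each $(1,1)$-cell --- a square with top $f$, bottom $g$, and (now trivialized) vertical sides --- yields $\bar f = \bar g$.

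Next I would reparametrize the generators by objects. Since $O$ is initial in the horizontal category, each object $A$ admits a unique horizontal map $\iota_A\colon O\cof A$; set $[A] := [\bar\iota_A]$. Initiality gives $f\iota_{C_0} = \iota_{C_1}$ for any $f\colon C_0\cof C_1$, so the composition relation forces $[\bar f] = [C_1][C_0]^{-1}$. In particular the $[A]$ generate, the $(0,2)$-relations become automatic, the identity $\iota_O = \id_O$ gives $[O]=0$, and each square relation $\bar f=\bar g$ rewrites as $[B][A]^{-1} = [D][C]^{-1}$. At this stage the group is presented by generators $[A]$ with relations $[O]=0$ and (multiplicatively) $[B][A]^{-1}=[D][C]^{-1}$ for every square, but is not yet visibly abelian.

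The final step --- and the only place the hypothesis is used --- is to deduce commutativity. Given $A,B$, apply the assumption to obtain an object $X$ together with the two squares; reading off the square relation for each (whose bottom edges are $\iota_B$ and $\iota_A$) gives $[X] = [B][A]$ from the first and $[X]=[A][B]$ from the second, whence $[A][B]=[B][A]$. Thus the group is abelian, the multiplicative square relation becomes $[A]+[D]=[B]+[C]$, and the presentation is exactly $\ZZ[\ob\dcat C]/\sim$. I expect the main obstacle to be the bookkeeping in the cellular $\pi_1$ computation: correctly orienting the boundaries of the $(0,2)$- and $(1,1)$-cells in the realization of a bisimplicial set, and checking that collapsing $BT_0\dcat C$ leaves precisely the two stated families of relations. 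The genuine conceptual content is concentrated in the last step, where the existence of the ``difference object'' $X$ is exactly what upgrades the a priori nonabelian quotient to the abelian Grothendieck group.
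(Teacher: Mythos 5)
This statement is never proved in the paper you were given: it is quoted, with attribution, as Theorem 3.1 of \cite{CKMZ:squares}, so there is no in-paper argument to compare your proposal against. What follows is therefore an assessment of your proposal on its own terms.

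Your argument is correct, and it is the natural route to the result. The three ingredients you wave at as ``bookkeeping'' are genuinely standard but should be cited or proved explicitly: (a) the realization of a bisimplicial set carries a CW structure with one $(p+q)$-cell for each bisimplex that is nondegenerate in both directions (this follows from the bisimplicial Eilenberg--Zilber lemma, the cell being $\Delta^p\times\Delta^q\cong D^{p+q}$ --- note in particular that a $(1,1)$-bisimplex contributes a single square-shaped $2$-cell, not two triangles); (b) $BT_0\dcat C$ is a subcomplex in this structure, since its cells are exactly the nondegenerate $(p,0)$-bisimplices, and it is contractible because $O$ is terminal in $\cat V_{\dcat C}$, so collapsing it is a homotopy equivalence; (c) the fundamental group of a CW complex with a single $0$-cell is the free group on its $1$-cells modulo the attaching words of its $2$-cells, with higher cells irrelevant. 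Granting these, your presentation is right: generators the nonidentity horizontal morphisms, relations $\overline{gf}=\bar g\bar f$ from the $(0,2)$-cells and $\bar f=\bar g$ from the $(1,1)$-cells, with degenerate edges read as constant loops. The Tietze elimination using initiality of $O$ in $\cat H_{\dcat C}$ (so $f\iota_{C_0}=\iota_{C_1}$, hence $\bar f=[C_1][C_0]^{-1}$, which also makes the $(0,2)$-relations trivial after rewriting) is clean, and your last step is exactly where the hypothesis must enter: the two given squares force $[X]=[B][A]$ and $[X]=[A][B]$, so the a priori nonabelian group is abelian, and the $(1,1)$-relations become the four-term relations, yielding $\ZZ[\ob\dcat C]/\sim$. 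One small point worth stating rather than leaving implicit: the relation $[O]=1$ comes from $\iota_O=\id_O$ together with the degenerate-edge convention, not from any $2$-cell.
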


With an eye towards developing a $S_\bullet$-construction for squares categories, we emphasize that every Waldhausen category naturally admits the structure of a squares category.
Other examples of squares categories are given in \cref{sec:examples}.

\begin{example}[{\cite[Example 1.8]{CKMZ:squares}}]\label{ex:waldhausen}
    Any Waldhausen category $\cat C$ gives rise to a squares category, in which the objects are the objects of $\cat C$, the horizontal morphisms are the cofibrations in $\cat C$, the vertical morphisms are all the morphisms in $\cat C$, and where a commutative diagram
    \[
    \begin{tikzcd}
        A\ar[r, >->]\ar[d, ->>]  & B \ar[d, ->>] \\
        C \ar[r, >->] & D
    \end{tikzcd}
    \]
is a square in the double category if the induced map from the pushout $B\sqcup_A C\to D$ is a weak equivalence. The basepoint $O$ is the zero object of $\cat C$.

The $K$-theory of this squares category is precisely the Thomason model for the $K$-theory of the Waldhausen category $\cat C$, which agrees with the space obtained through the classical $S_\bullet$-construction as explained in \cite[Section 1.3]{waldhausen:1983}.
\end{example}

\subsection{The $S^{\square}_\bullet$-construction}\label{section:Sdot}

Every squares category admits a version of the $S_\bullet$-construction, obtained by simply considering diagrams of the appropriate shape. However, in general this construction may not model their $K$-theory space as given in \cref{defn:T-dot}. In this section we explain how to extract a class of weak equivalences from the data present in a squares category, and we define an $S^{\square}_\bullet$-construction analogous to the one for Waldhausen categories. Similar constructions have been defined in other, related settings \cite{BOORS:2016, camzak:22, dyckerhoff/kapranov:19, SS-CGW}. In later subsections we will discuss the requirements needed on a squares category to ensure the two simplicial constructions agree after realization.

\begin{notation}
For any $n\geq 0$, let $S^{\square}_n$ denote the (flat) double category free on the data
\[
\begin{tikzcd}
        A_{00} \ar[r, >->] & A_{01} \ar[r, >->]\ar[d, ->>] \ar[rd, phantom, "\square"] & A_{02} \ar[r, >->] \ar[d, ->>] \ar[rd, phantom, "\square"] & \cdots \ar[r, >->]  \ar[rd, phantom, "\square"] & A_{0n-1} \ar[r, >->] \ar[d, ->>] \ar[rd, phantom, "\square"] & A_{0n} \ar[d, ->>] \\
    & A_{11} \ar[r, >->] & A_{12} \ar[r, >->] \ar[d, ->>] \ar[rd, phantom, "\square"] & \cdots \ar[r, >->] \ar[rd, phantom, "\square"] & A_{1n-1} \ar[r, >->] \ar[d, ->>] \ar[rd, phantom, "\square"] & A_{1n} \ar[d, ->>]\\
    && A_{22} \ar[r, >->] & \cdots \ar[r, >->]  & A_{2n-1} \ar[r, >->] \ar[d, ->>] \ar[rd, phantom, "\square"] & A_{2n} \ar[d, ->>]\\
    &&& & \ddots & \vdots \ar[d, ->>]\\
    &&&& & A_{nn}
    \end{tikzcd}
    \] A free double category is analogous to the usual categorical notion, and we refer the reader to \cite[Section 3]{fiore/paoli/pronk} for a formal discussion.
\end{notation}

\begin{definition}\label{defn:S-dot of cat w squares}
    Given a squares category $\dcat C$ and $n\geq 0$, we define $S^{\square}_n\dcat C$ as the subcategory of $\Fun^v(S^{\square}_n, \dcat C)$ whose objects are the functors $F\colon S^{\square}_n\to\dcat C$ such that $F(A_{ii})=O$ for all $i$, and morphisms are vertical natural transformations. Then $S^{\square}_\bullet \dcat C$ forms a simplicial category, where the face map $d_i$ removes the $i$th row and column (and composes when appropriate) and degeneracy maps simply insert identity maps and identity squares.
\end{definition}

It is straightforward to check that this definition extends to a functor between squares categories and simplicial categories.

\begin{proposition}
    The $S^{\square}_\bullet$-construction defines a functor $S^{\square}_\bullet\colon {\rm SqCat}\to s\Cat$.
\end{proposition}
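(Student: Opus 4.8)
The plan is to realize the entire construction as arising from a single cosimplicial diagram of \emph{shapes}, so that both the simplicial identities and the functoriality in $\dcat C$ reduce to formal manipulations of pre- and post-composition. First I would record that the assignment $[n]\mapsto S^{\square}_n$ underlies a cosimplicial object in the category of flat double categories. Concretely, $S^{\square}_n$ has objects the pairs $(i,j)$ with $0\le i\le j\le n$, a unique horizontal morphism raising the second index, a unique vertical morphism raising the first index, and every commuting boundary as a (unique, by flatness) square. The standard coface and codegeneracy maps $\delta^i\colon[n-1]\to[n]$ and $\sigma^i\colon[n+1]\to[n]$ in $\Delta$ act on these indices and induce double functors $\delta^i\colon S^{\square}_{n-1}\to S^{\square}_n$ and $\sigma^i\colon S^{\square}_{n+1}\to S^{\square}_n$, which satisfy the cosimplicial identities because they already do on the indexing posets. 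Crucially, each of these double functors sends diagonal objects $(i,i)$ to diagonal objects, so precomposition preserves the defining condition $F(A_{ii})=O$ from \cref{defn:S-dot of cat w squares}.

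Next I would check that precomposition with these structure maps recovers the face and degeneracy operations of \cref{defn:S-dot of cat w squares}. Since $\Fun^v(-,\dcat C)$ is contravariant in its first argument, precomposing a diagram with $\delta^i$ deletes the $i$th row and column---composing the two resulting horizontal (respectively vertical) morphisms whenever the deleted index was interior, which is exactly the ``compose when appropriate'' clause---while precomposing with $\sigma^i$ inserts a repeated object joined by identity morphisms and identity squares. One must then confirm that these operations carry the subcategory $S^{\square}_n\dcat C\subseteq\Fun^v(S^{\square}_n,\dcat C)$ into the correct subcategory for the new index. On objects this is the diagonal-preservation noted above; on morphisms it follows because precomposition does not disturb the square-- and commutativity-conditions of \cref{def:naturaltr}, the condition at a morphism that becomes a composite being obtained by pasting the conditions at its factors. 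As the cosimplicial identities hold at the level of shapes, the simplicial identities hold for $S^{\square}_\bullet\dcat C$, which is therefore a simplicial category.

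For functoriality in the input I would use post-composition. A functor of squares categories $F\colon\dcat C\to\dcat D$ is a pointed double functor, so $F_*\colon\Fun^v(S^{\square}_n,\dcat C)\to\Fun^v(S^{\square}_n,\dcat D)$, $\Phi\mapsto F\circ\Phi$, preserves the diagonal condition because $F(O)=O$, and hence restricts to a functor $S^{\square}_n\dcat C\to S^{\square}_n\dcat D$; since $F$ preserves squares and vertical composites, it sends vertical natural transformations to vertical natural transformations. Because pre- and post-composition always commute, $F_*$ is compatible with every face and degeneracy map, so $S^{\square}_\bullet F$ is a morphism of simplicial categories. Finally $S^{\square}_\bullet(\id_{\dcat C})=\id$ and $S^{\square}_\bullet(G\circ F)=S^{\square}_\bullet G\circ S^{\square}_\bullet F$, since $(G\circ F)_*=G_*\circ F_*$, establishing the functor $S^{\square}_\bullet\colon {\rm SqCat}\to s\Cat$.

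The only genuinely non-formal point---and the step I expect to be the main obstacle---is checking that the coface and codegeneracy double functors are well defined and that the induced inner face maps behave as claimed: one must verify that deleting an interior row and column, composing the two adjacent horizontal (respectively vertical) morphisms, and pasting the two flanking squares again produces a valid diagram of the shape $S^{\square}_{n-1}$. This is the usual bookkeeping subtlety of any $S_\bullet$-type construction, but here it is controlled entirely by the associativity and unitality of horizontal, vertical, and square composition in the (flat) double category, so no new phenomena arise and the verification is routine.
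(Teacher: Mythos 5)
Your proof is correct. There is actually no proof in the paper to compare against: the proposition is stated immediately after \cref{defn:S-dot of cat w squares} with only the remark that the extension to a functor is ``straightforward to check.'' Your write-up supplies exactly the verification the authors leave implicit, and it does so in what is arguably the intended way: since the paper already introduces the shape double categories $S^{\square}_n$ and defines $S^{\square}_n\dcat C$ inside $\Fun^v(S^{\square}_n,\dcat C)$, promoting $[n]\mapsto S^{\square}_n$ to a cosimplicial object in flat double categories, realizing the face and degeneracy maps as precomposition along the coface/codegeneracy double functors, and obtaining functoriality in $\dcat C$ by postcomposition is the formalization that makes the paper's informal description (``removes the $i$th row and column (and composes when appropriate)'', ``insert identity maps and identity squares'') literally true, while rendering the simplicial identities and the naturality in $\dcat C$ purely formal. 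The two substantive points specific to this setting --- that the coface/codegeneracy double functors carry diagonal objects to diagonal objects, so the condition $F(A_{ii})=O$ is preserved, and that whiskering preserves the vertical-naturality conditions of \cref{def:naturaltr}, using that those conditions paste along composites --- are both handled correctly, as is the use of pointedness ($F(O)=O$) for postcomposition. One small remark: the ``main obstacle'' you flag in your final paragraph dissolves inside your own framework. Once $\delta^i\colon S^{\square}_{n-1}\to S^{\square}_n$ is known to be a well-defined double functor --- which requires only that the image of each generating square of $S^{\square}_{n-1}$ is a rectangle in $S^{\square}_n$ fillable by pasting generating squares, automatic here and unique by flatness --- the precomposite $F\circ\delta^i$ is a double functor and hence a valid diagram of shape $S^{\square}_{n-1}$; no separate check that composing morphisms and pasting the flanking squares yields a valid diagram is needed.
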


\begin{remark}\label{rmk:augmented Sdot}
In some examples, it seems natural to relax the condition in \cref{defn:S-dot of cat w squares} that $F(A_{ii}) = O$ and instead ask that $F(A_{ii})\in \cat A$ for all $i$, where $\cat A$ is some distinguished collection of objects. One could define a notion of \textit{augmented squares category}, similar to an \textit{augmented double category} described in \cite[Definition 3.6 and Proposition 3.8]{BOORS:2016}, and define an $S_\bullet$-construction for such squares categories. 

Although we do not pursue this idea in this paper, one can see how it is the natural framework to capture certain examples. For instance, one can encode the $K$-theory of graphs considered in \cite{BCGP:24} using an augmented squares category, by letting horizontal morphisms be subgraph inclusions which are bijections on vertices, vertical morphisms be maps which contract some collection of edges and are bijections on connected components, and squares be pushouts. The different notions of weak equivalence outlined in \cite[Definition 5.4]{BCGP:24} can be incorporated into a squares category by asking for squares to be pushouts up to the corresponding notion of weak equivalence. This squares category does not have a distinguished initial/terminal object, but instead, a collection of such objects (the discrete sets). 
\end{remark}

\begin{example}\label{ex:S2}
    Let us depict explicitly what the objects and morphisms in the category $S^{\square}_2\dcat C$ look like. An object consists of a diagram in $\CC$ of the form
    \[
\begin{tikzcd}
        O \ar[r, >->] & A \ar[r, >->]\ar[d, ->>] \ar[rd, phantom, "\square"] & B  \ar[d, ->>] \\
    & O \ar[r, >->] & C  \ar[d, ->>] \\
    && O
    \end{tikzcd}
    \]
    A morphism consists of a diagram
       \[
\begin{tikzcd}[row sep=tiny, column sep=tiny]
& O \ar[rr,>->] & & A'\ar[rr, >->]\ar[dd, ->>]  & & B'\ar[dd, ->>]\\
        O\ar[ur,equal] \ar[rr, >->] & & A\ar[ur,->>] \ar[rr, >->, crossing over]\ar[dd, ->>]  & & B \ar[ur,->>] &  \\
       & & & O \ar[rr, >->] & & C'  \ar[dd, ->>]  \\
    & & O\ar[ur,equal] \ar[rr, >->] & & C \ar[ur,->>] \ar[dd, ->>]\ar[from=uu,->>, crossing over] & \\
       & & & & & O \\
    & & & & O\ar[ur,equal] &
    \end{tikzcd}
    \]  where any square whose boundary has two horizontal and two vertical morphisms is a square in $\dcat C$, and any square whose boundary consists of four vertical morphisms is a commutative diagram in the underlying vertical category of $\dcat C$.
\end{example}

\begin{remark}
    Unlike the classical $S_\bullet$-construction for Waldhausen categories, there is a choice required in our definition of $S^{\square}_\bullet\dcat C$: since we do not assume that there is an ambient (1-)category from which $\dcat C$ arises, we can no longer speak of ``natural transformations'', and instead must decide to use either horizontal or vertical natural transformations. The theory could be analogously developed for horizontal ones; our vertical bias is due to the fact that this is what fits many motivating examples. We will see a similar bias again in \cref{defn:protoWald cat}.
    Alternatively, one could construct a simplicial double category whose horizontal (resp.\ vertical) morphisms are the horizontal (resp.\ vertical) natural transformations; this would allow the $S_\bullet^{\square}$-construction to be iterable, and indeed is the approach taken in \cite{SS-CGW}, but is more technically involved than what we require here.
\end{remark}

\begin{remark}
    The definition above is what motivates our requirement in \cref{defn:squarescat} that $O$ be terminal in vertical morphisms instead of initial. In many examples of interest the distinguished object is initial in vertical morphisms, but as previously mentioned in \cref{rmk:O choices}, this problem can be circumvented by taking the opposites of vertical arrows.
\end{remark}

In the classical $S_\bullet$-construction for Waldhausen categories, one must restrict the natural transformations to those that are pointwise weak equivalences in order to construct the $K$-theory space. To make an analogous restriction possible in our setting, we now identify a notion of horizontal and vertical weak equivalence, encoded by the squares.

\begin{definition}\label{defn:weakequiv}
Let $\dcat C$ be a squares category.    A horizontal morphism $A\cof B$ is a \textit{horizontal weak equivalence} if it participates in a square\[
    \begin{tikzcd}
        A \ar[r, >->] \ar[d, ->>]\ar[rd, phantom, "\square"] & B \ar[d, ->>] \\
        O \ar[r, equal] & O
    \end{tikzcd}
    \]  Similarly, define \textit{vertical weak equivalences} to be the vertical morphisms $A\quot B$ that participate in a square\[
    \begin{tikzcd}
        O \ar[r, >->] \ar[d, equal]\ar[rd, phantom, "\square"] & A \ar[d, ->>] \\
        O \ar[r, >->] & B
    \end{tikzcd}
    \]
    Denote these collections of morphisms by $w^h\dcat C\subseteq \cat H_{\dcat C}$ and $w^v\dcat C\subseteq \cat V_{\dcat C}$, respectively.
\end{definition}

\begin{remark}\label{rmk:weakequivsforfree}
By definition, the pointwise vertical maps appearing in a morphism in the category $S^{\square}_\bullet\dcat C$ are all in $w^v\dcat C$. To illustrate this in the case of $n=2$ of \cref{ex:S2}, we see that $A\twoheadrightarrow A'$ and $C\twoheadrightarrow C'$ are weak equivalences directly by definition, and the composite \[\begin{tikzcd}
              O \ar[r, >->] \ar[d, equal] \ar[rd, phantom, "\square"] &  A \ar[r, >->]\ar[d, ->>] \ar[rd, phantom, "\square"] & B \ar[d, ->>] \\
           O \ar[r, >->] & A' \ar[r, >->] & B'
\end{tikzcd}
    \] gives the square required for $B\twoheadrightarrow B'$ to be a weak equivalence.
\end{remark}

We now detail how this notion of weak equivalence compares to the classical one for the case of Waldhausen categories, whose structure as a squares category was detailed in \cref{ex:waldhausen}. Other examples are discussed in \cref{sec:examples}.

\begin{example}\label{ex:waldhausenweakequivs}
If $\cat C$ is a Waldhausen category viewed as a squares category as in \cref{ex:waldhausen}, then $w^v\dcat C$ are the weak equivalences in $\cat C$. To see this, note that a vertical morphism (i.e. any morphism $f\colon A\to B$ in $\cat C$) is a vertical weak equivalence precisely if the induced map out of the pushout $A\cong A\sqcup_O O\to B$ is a weak equivalence in $\cat C$, but this map is $f$ itself.

Similarly, one can check that the maps in $w^h\dcat C$ are the cofibrations $f\colon A\cof B$ such that the unique map from the cofiber $B/A\to O$ is a weak equivalence. The Gluing lemma ensures that any trivial cofibration is in $w^h\dcat C$. Conversely, if $\cat C$ satisfies the extension axiom and is such that whenever $X\to 0$ is a weak equivalence, then $0\to X$ is also a weak equivalence (which holds, for instance, if $\cat C$ satisfies the saturation axiom), then any map in $w^h\dcat C$ is a trivial cofibration.
\end{example}

\begin{remark}\label{rmk:weaklyexact}
Note that if $\cat C$ is a Waldhausen category viewed as a squares category, then our definition of $S^{\square}_\bullet\cat C$ does not quite match the simplicial category $wS_\bullet\cat C$ introduced by Waldhausen. The issue is with the objects, which in the classical setting consist of diagrams with pushout squares, but in the double categorical setting have pushouts \emph{up to weak equivalence}. Interestingly, the morphisms are defined in the same manner for both constructions: that is, a natural transformation that is a pointwise weak equivalence encodes the same data as a vertical natural transformation valued in $w^v\dcat C$. Indeed, by the gluing lemma, any commutative square of the form \[
    \begin{tikzcd}
        A \ar[r, >->] \ar[d, "\sim"'] & B \ar[d, "\sim"] \\
        A' \ar[r, >->] & B'
    \end{tikzcd}
    \] will be such that the induced map $A'\sqcup_A B\to B'$ is a weak equivalence and hence it is a square in the double category.
Despite this discrepancy, in \cref{cor:WaldhausenisProtoWaldhausen} we show that $S^{\square}_\bullet\cat C$ does give an alternate model for the $K$-theory of any Waldhausen category; hence, it agrees with Waldhausen's construction after realization.

A feature of the $S^{\square}_\bullet$-construction is that a squares functor $\cat C\to \cat D$ between Waldhausen categories will induce a morphism of simplicial categories $S^{\square}_\bullet \cat C\to S^{\square}_\bullet \cat D$. Notably, the same statement does not hold for Waldhausen's definition of $S_\bullet$. Indeed, although every exact functor of Waldhausen categories induces a squares functor on their associated squares categories (\cref{ex:waldhausen}), it is not the case that every squares functor induces an exact functor, as it may not preserve pushouts along cofibrations in general. Instead, a squares functor $F\colon \cat C\to \cat D$ will only be ``weakly exact'' in the sense that there is a (canonical) weak equivalence $FB\cup_{FA} FC\xrightarrow{\sim} F(B\cup_A C)$ for any span $C\leftarrow A\cof B$. Our definition of $S^{\square}_\bullet$ is functorial in weakly exact functors.
    \end{remark}

\begin{remark}
    Our definition of the $S^{\square}_\bullet$-construction is very similar to the $S'_\bullet$-construction of Blumberg--Mandell. In \cite[Theorem 2.9]{blumberg/mandell:08}, they show that the $S'_\bullet$-construction models the $K$-theory of a large class of Waldhausen categories. Although their construction is morally similar to ours (using homotopy pushouts rather than actual pushouts), we do not in general expect the two to be the same since the $S'_\bullet$-construction uses a more general notion of cofibration.
\end{remark}

We conclude this subsection by showing that, just like in the classical setting, when the weak equivalences are isomorphisms our construction above does not introduce any additional data.

\begin{proposition}\label{prop:S-bullet with isos}
  Let $\dcat C$ be a squares category such that $w^v\dcat C=iso(\cat V_{\dcat C})$. Then $\abs{S^{\square}_\bullet \dcat C}\simeq \abs{\ob S^{\square}_\bullet \dcat C}$.
\end{proposition}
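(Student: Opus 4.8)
The plan is to exploit the hypothesis to recognize each $S^{\square}_n\dcat C$ as a groupoid, and then to show that passing from objects to these groupoids does not change the \emph{total} realization. First I would record that, by \cref{rmk:weakequivsforfree}, every morphism of $S^{\square}_n\dcat C$ is a vertical natural transformation whose components lie in $w^v\dcat C$; under the hypothesis $w^v\dcat C=\operatorname{iso}(\cat V_{\dcat C})$ these components are isomorphisms, so (using flatness to invert the defining squares) $S^{\square}_n\dcat C$ is a groupoid for every $n$. Thus $S^{\square}_\bullet\dcat C$ is a simplicial groupoid, and the inclusion of objects as a discrete subcategory gives a comparison map $\ob S^{\square}_\bullet\dcat C\to S^{\square}_\bullet\dcat C$ whose realization I must show is an equivalence.

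The main step is a bisimplicial reduction. Model $\abs{S^{\square}_\bullet\dcat C}$ as the realization of the bisimplicial set $(n,m)\mapsto N_m(S^{\square}_n\dcat C)$, and view $\ob S^{\square}_\bullet\dcat C$ as the sub-bisimplicial set that is constant in the nerve direction $m$. Realizing the simplicial ($n$) direction first produces a simplicial space $W_\bullet$ with $W_m=\abs{N_m S^{\square}_\bullet\dcat C}$ and $W_0=\abs{\ob S^{\square}_\bullet\dcat C}$, and $\abs{W_\bullet}\simeq\abs{S^{\square}_\bullet\dcat C}$. By the realization lemma it suffices to prove that $W_\bullet$ is homotopy constant, i.e.\ that the augmentation $W_\bullet\to W_0$ is an equivalence. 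Since $N_\bullet S^{\square}_n\dcat C$ is the nerve of a groupoid, $W_\bullet$ inherits a Segal structure, so this in turn reduces to checking that the two face maps $d_0,d_1\colon W_1\rightrightarrows W_0$ are weak equivalences; as $d_0s_0=d_1s_0=\id$, it is enough to show that the degeneracy $s_0\colon\abs{\ob S^{\square}_\bullet\dcat C}\to\abs{N_1 S^{\square}_\bullet\dcat C}$ is an equivalence.

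To identify $N_1 S^{\square}_\bullet\dcat C$, I would introduce the ``vertical arrow'' squares category $\dcat C^{\to}$ whose objects are the vertical isomorphisms of $\dcat C$ and whose horizontal and vertical morphisms and squares are the commuting squares of $\dcat C$ relating two such isomorphisms; then $N_1 S^{\square}_\bullet\dcat C\cong\ob S^{\square}_\bullet\dcat C^{\to}$, with $s_0$ corresponding to the unit (identity-isomorphism) section and $d_1$ to the source functor $\dcat C^{\to}\to\dcat C$. Because every object of $\dcat C^{\to}$ is an isomorphism, the source functor is an equivalence of squares categories with the identity-section as inverse, and the crux is to promote this to a weak equivalence $\abs{\ob S^{\square}_\bullet\dcat C^{\to}}\simeq\abs{\ob S^{\square}_\bullet\dcat C}$ of realizations.

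I expect this last point to be the main obstacle, and it is exactly where the hypothesis is essential. One cannot argue levelwise: at a fixed simplicial degree the groupoid $S^{\square}_n\dcat C$ genuinely has nontrivial automorphisms --- for $\Fin\Set$ (\cref{ex:FinSet}) it is the groupoid of (flags of) finite sets and bijections, so $BS^{\square}_n\dcat C$ carries the homotopy of the symmetric groups and is far from discrete. Hence the automorphisms must be absorbed by the interaction of the groupoid structure with the simplicial ($n$) structure, and the argument must use both invertibility and the full simplicial direction. I would complete it either by an explicit deformation of $\abs{N_1 S^{\square}_\bullet\dcat C}$ onto $\abs{\ob S^{\square}_\bullet\dcat C}$ built from the functorial squares $(\id,\tau)$ attached to each morphism $\tau$, or by verifying directly that the source functor of $\dcat C^{\to}$ induces a weak equivalence on $\ob S^{\square}_\bullet$ via the same bisimplicial techniques used above.
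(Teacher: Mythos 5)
Your overall skeleton is in fact the same as the paper's: the proof there also works with the bisimplicial object $(n,m)\mapsto N^w_m S^{\square}_n\dcat C$, notes that its $m=0$ level is exactly $\ob S^{\square}_\bullet\dcat C$, and concludes by the analogue of Waldhausen's Corollary of Lemma 1.4.1 once the face and degeneracy maps in the nerve direction are known to induce homotopy equivalences after realizing the $n$-direction. However, your write-up has two genuine gaps. The first is the opening claim that $S^{\square}_n\dcat C$ is a groupoid ``using flatness to invert the defining squares.'' Flatness only says that a square is determined by its boundary; it produces no square on the inverted boundary. This is precisely the subtlety the paper isolates in \cref{rmk:isostable}: a vertical natural transformation of double functors whose components are all isomorphisms need not admit an inverse natural transformation, and the extra invertibility-of-squares hypothesis of \cref{defn:isostable} is introduced later exactly because it does not come for free. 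So this step fails as justified, and your Segal reduction leans on it; moreover that reduction is itself not formal. Realization in the $n$-direction does preserve the strict pullbacks $W_m\cong W_1\times_{W_0}\cdots\times_{W_0}W_1$, but these need not be homotopy pullbacks (the face maps are not fibrations), so knowing that $d_0,d_1\colon W_1\to W_0$ are weak equivalences does not by itself propagate to all structure maps of $W_\bullet$; compare the caution in \cref{rmk:Tdot1segalincat}.

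The second, more serious, gap is that the proposal stops exactly where the content of the proposition lies. Your reduction to the statement that the source functor $\dcat C^{\to}\to\dcat C$ induces an equivalence on $\abs{\ob S^{\square}_\bullet(-)}$ parallels Waldhausen's own argument (where $N_1$ of the nerve direction is identified with the $S_\bullet$-construction of a category of isomorphisms), but that statement is not a formal consequence of having an ``equivalence of squares categories'': $\ob S^{\square}_\bullet$ is a simplicial \emph{set} of objects, so natural transformations do not act on it and cannot be converted into homotopies by generalities about classifying spaces. One must construct an explicit simplicial homotopy in the $n$-direction, mixing the two staircase diagrams along a flag using the components and naturality squares of the transformation, and check that the mixed boundaries are still squares of $\dcat C$; this construction is the actual work, and it is what the paper's citation of Waldhausen's Corollary of Lemma 1.4.1 stands in for. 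Since you explicitly defer this step to two untested one-sentence strategies, the proposal as written does not establish the proposition.
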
\begin{proof}
This is the analogue of \cite[Corollary of Lemma 1.4.1]{waldhausen:1983}. Note that $N_0^wS^{\square}_n\dcat C = \ob S^{\square}_n\dcat C$ and all of the face and degeneracy maps in the nerve $N^w_*S^{\square}_n\dcat C$ are homotopy equivalences (since all the morphisms in the category $S^{\square}_n\dcat C$ are isomorphisms). Hence $\abs{S^{\square}_\bullet \dcat C}\simeq \abs{N_0^wS^{\square}_\bullet \dcat C} \simeq \abs{\ob S^{\square}_\bullet \dcat C}$.
\end{proof}

\subsection{Proto-Waldhausen categories}
In this subsection, we describe sufficient conditions for the $T_\bullet$-construction and $S^{\square}_\bullet$-construction to coincide for a given squares category. Inspired by the proto-exact categories of \cite{dyckerhoff/kapranov:19}, we introduce a notion of \textit{proto-Waldhausen category}. Essentially, this will be a double categorical version of a Waldhausen category, where we enforce conditions on the squares so they behave like pushout squares.

Given a squares category $\dcat C$ and any (flat) double category $\dcat I$, we can define a double category $\Fun(\dcat I,\dcat C)$ whose objects are the double functors, horizontal (resp.\ vertical) morphisms are the horizontal (resp.\ vertical) natural transformations of \cref{def:naturaltr}, and where squares are defined pointwise. Moreover,  $\Fun(\dcat I, \dcat C)$ inherits a squares category structure with pointwise basepoint. We will particularly care about the cases where $\dcat I$ is the flat double category generated by a single square, or a single ``span'', or a single ``cospan'' as illustrated below
\[
    \begin{tikzcd}
\scriptstyle\bullet \ar[r, >->] \ar[d, ->>] \ar[rd, phantom, "\square"] & \scriptstyle\bullet \ar[d, ->>]\\
\scriptstyle\bullet \ar[r, >->, swap]& \scriptstyle\bullet
\end{tikzcd}
\hspace{1cm} ; \hspace{1cm}
    \begin{tikzcd}
\scriptstyle\bullet \ar[r, >->] \ar[d, ->>]  & \scriptstyle\bullet \\
\scriptstyle\bullet &
\end{tikzcd}
\hspace{1cm} ; \hspace{1cm}
    \begin{tikzcd}
 & \scriptstyle\bullet \ar[d, ->>]\\
\scriptstyle\bullet \ar[r, >->, swap]& \scriptstyle\bullet
\end{tikzcd};
    \] we denote these diagrams by $\tinysquare$, $\tinyspan$ and $\tinycospan$, respectively.

\begin{definition}\label{defn:protoWald cat}
A \textit{proto-Waldhausen category} is a squares category $\dcat C$ satisfying the following conditions:
\begin{itemize}
        \item[(i)]\label{protoi} 
        The functor $i^*\colon \Fun^v\left(\tinysquare, \dcat C\right)\to \Fun^v\left(\tinyspan, \dcat C\right)$ induced by the inclusion $i\colon\tinyspan\hookrightarrow\tinysquare$ admits a section functor $s$.
        \item[(ii)]\label{protoii} There is a natural transformation $w\colon si^*\Rightarrow \id$ whose component on a square is\[
\begin{tikzcd}[row sep=tiny, column sep=tiny]
  & A\ar[rr,>->]\ar[dd,->>] && B\ar[dd,->>]\\
  A\ar[ur,equal, gray]\ar[rr,>->, crossing over]\ar[dd,->>] && B\ar[ur,equal, gray] &\\
  & C\ar[rr,>->] && D\\
   C\ar[ur,equal, gray]\ar[rr,>->] && D_0\ar[ur,->>,gray,"w_D"']\ar[from=uu,->>, crossing over] &\\
\end{tikzcd} \hspace{1cm}
\begin{tikzcd}
  & \id\\
  si^*\ar[ur,->>,gray,"w"]\\
\end{tikzcd}.
\] Recall that, by definition of morphisms in $\Fun^v\left(\tinysquare,\dcat C\right)$, all squares in the above diagram are either commutative squares in the underlying vertical category or distinguished squares, as appropriate.
        \end{itemize}
\end{definition}

The definition above is meant to invoke the idea of squares being pushouts up to weak equivalence, as in the Thomason construction of a Waldhausen category. To make this more clear, and to elucidate the definitions, let us unpack explicitly the conditions in \cref{defn:protoWald cat}.

\begin{remark}  Condition (i) can be interpreted as follows. First, suppose we have an object in $ \Fun^v\left(\tinyspan, \dcat C\right)$, i.e. a span as depicted below left. Then, the action of the section $s$ on objects allows us to complete this span to a distinguished square as below right\[
\begin{tikzcd}
    A \ar[r, >->] \ar[d, ->>] & B\\
    C & 
\end{tikzcd} ~\mapsto~ \begin{tikzcd}
    A \ar[r, >->] \ar[d, ->>] \ar[rd, phantom, "\square"] & B \ar[d, ->>] \\
    C \ar[r, >->] & D_0
\end{tikzcd}.
\] Given a vertical natural transformation between such spans,\[
\begin{tikzcd}[row sep=tiny, column sep=tiny]
  & A'\ar[rr,>->]\ar[dd,->>] && B'\\
  A\ar[dd,->>]\ar[ur,->>]\ar[rr,>->, crossing over] && B\ar[ur,->>] &\\
  & C' && \\
   C\ar[ur,->>] && &\\
\end{tikzcd},
\] the section $s$ additionally supplies a vertical map $D_0\quot D_0'$ to form a diagram as below\[
\begin{tikzcd}[row sep=tiny, column sep=tiny]
  & A'\ar[rr,>->]\ar[dd,->>] && B'\ar[dd,->>]\\
  A\ar[ur,->>]\ar[rr,>->, crossing over]\ar[dd,->>] && B\ar[ur,->>] &\\
  & C'\ar[rr,>->] && D'_0\\
   C\ar[ur,->>]\ar[rr,>->] && D_0\ar[ur,->>,dashed]\ar[from=uu,->>, crossing over] &\\
\end{tikzcd}.
\] This diagram is such that all of the squares are either commutative or distinguished as appropriate. This assignment is functorial in the direction of the vertical natural transformation.

 Given a distinguished square, we can apply $i^*$ to obtain a span and then $s$ to obtain a new, possibly different distinguished square with the same span as the original one, as depicted below\[
\begin{tikzcd}
    A \ar[r, >->] \ar[d, ->>] \ar[rd, phantom, "\square"] & B \ar[d, ->>] \\
    C \ar[r, >->] & D
\end{tikzcd} ~\mapsto~ \begin{tikzcd}
    A \ar[r, >->] \ar[d, ->>] & B\\
    C & 
\end{tikzcd} ~\mapsto~ \begin{tikzcd}
    A \ar[r, >->] \ar[d, ->>] \ar[rd, phantom, "\square"] & B \ar[d, ->>] \\
    C \ar[r, >->] & D_0
\end{tikzcd}.
\] Condition (ii) says that there is a vertical natural transformation between these two distinguished squares, as depicted in \cref{defn:protoWald cat}(ii). In particular, note that the induced vertical morphism $w_D\colon D_0\twoheadrightarrow D$ is always a vertical weak equivalence, since we have a composite of the distinguished squares
\[\begin{tikzcd}
              O \ar[r, >->] \ar[d, equal] \ar[rd, phantom, "\square"] &  C \ar[r, >->]\ar[d, equal] \ar[rd, phantom, "\square"] & D_0 \ar[d, ->>, "w_D"] \\
           O \ar[r, >->] & C \ar[r, >->] & D
\end{tikzcd}.
    \]
The naturality requirement in condition (ii) states that, for every given diagram as below left whose squares are either commutative or distinguished as appropriate \[
\begin{tikzcd}[row sep=tiny, column sep=tiny]
  & A'\ar[rr,>->]\ar[dd,->>] && B'\ar[dd,->>]\\
  A\ar[ur,->>]\ar[rr,>->, crossing over]\ar[dd,->>] && B\ar[ur,->>] &\\
  & C'\ar[rr,>->] && D'\\
   C\ar[ur,->>]\ar[rr,>->] && D\ar[ur,->>,dashed]\ar[from=uu,->>, crossing over] &\\
\end{tikzcd} \hspace{2cm}
\begin{tikzcd}
  D_0\rar[->>]\dar[->>,"w_D"'] & D'_0\dar[->>,"w_{D'}"]\\
  D\rar[->>] & D'
\end{tikzcd}
\] the resulting diagram depicted above right commutes in $\cat V_{\dcat C}$.
        \end{remark}

As expected, every Waldhausen category is proto-Waldhausen, using the squares category structure from \cref{ex:waldhausen}. 
Further examples (and non-examples) of proto-Waldhausen categories are given in \cref{sec:examples}.

\begin{example}\label{ex:waldisprotowald}
If $\cat C$ is a Waldhausen category, then the squares category defined from $\cat C$ is proto-Waldhausen. To check condition (i), note that we can complete any span as below left \[
\begin{tikzcd}[row sep=small, column sep=small]
  & A'\ar[rr,>->]\ar[dd,->>] && B'\\
  A\ar[dd,->>]\ar[ur,->>]\ar[rr,>->, crossing over] && B\ar[ur,->>] &\\
  & C' && \\
   C\ar[ur,->>] && &\\
\end{tikzcd} \hspace{2cm}
\begin{tikzcd}[bo column sep, bo row sep]
  & A'\ar[rr,>->]\ar[dd,->>] && B'\ar[dd,->>]\\
  A\ar[ur,->>]\ar[rr,>->, crossing over]\ar[dd,->>] && B\ar[ur,->>] &\\
  & C'\ar[rr,>->] && B'\cup_{A'} C'\\
   C\ar[ur,->>]\ar[rr,>->] && B\cup_A C\ar[ur,->>,dashed] \ar[from=uu,->>, crossing over]&\\
\end{tikzcd}
\] to a diagram as above right by taking pushouts. The required vertical map is given by the universal property of the pushout for $B\sqcup_A C$. By construction, the front and back faces of the cube are squares in the double category, and the right face in the cube is a commutative diagram. To see that the bottom face we obtain is a square in the double category, note that there is a weak equivalence \[(B\cup_A C)\cup_C C'\cong B\cup_A C'\cong (B\cup_A A')\cup_{A'} C'\xrightarrow{\sim} B'\cup_{A'} C'\] where the last map uses the gluing axiom and the weak equivalence $B\cup_A A'\xrightarrow{\sim} B'$ as the top face of the cube is a square in the double category. Functoriality of this section is guaranteed by the universal property of the pushout.

The natural transformation $w$ of condition (ii) is constructed using the definition of the squares in the squares category obtained from $\cat C$. Finally, the naturality of $w$ is a direct consequence of the universal property of the pushout.
\end{example}

\subsection{A Waldhausen--Thomason comparison}

In this section, we show that the constructions $T_\bullet \dcat C$ and $S^{\square}_\bullet \dcat C$ agree after realization whenever $\dcat C$ is proto-Waldhausen, following Waldhausen's strategy (see \cite[\S 1.3]{waldhausen:1983}). These simplicial categories are not directly connected through a simplicial map. Instead, the key is to construct a third simplicial category $T^+_\bullet\dcat C$ together with simplicial maps $$T_\bullet\dcat C \leftarrow T^+_\bullet \dcat C\to S^{\square}_\bullet\dcat C$$ which induce homotopy equivalences after realization.

We start by introducing the auxiliary simplicial category $T^+_\bullet\dcat C$. Intuitively, its role is to extend the objects of $T_\bullet\dcat C$ to include choices of ``cofibers''. In the classical setting of Waldhausen categories, these are constructed as actual cofibers by taking sequential pushouts. In our setting, the role played by pushouts squares is replaced by the squares in the squares category.

\begin{definition}\label{defTplus}
    Given a squares category $\dcat C$, let $T^+_n\dcat C$ denote the category whose objects are diagrams of the form
    \begin{equation}\label{eqn1}\begin{tikzcd}
        C_{0}\ar[rd, phantom, "\square"] \ar[d, ->>]\ar[r, >->] & C_{1} \ar[r, >->]\ar[d, ->>] \ar[rd, phantom, "\square"] & C_{2} \ar[r, >->] \ar[d, ->>] \ar[rd, phantom, "\square"] & \cdots \ar[r, >->]  \ar[rd, phantom, "\square"]  & C_{n} \ar[d, ->>] \\
    O \ar[r, >->]& C_{01}\ar[rd, phantom, "\square"]\ar[d, ->>] \ar[r, >->] & C_{02} \ar[r, >->] \ar[d, ->>] \ar[rd, phantom, "\square"] & \cdots \ar[r, >->] \ar[rd, phantom, "\square"]  & C_{0n} \ar[d, ->>]\\
    &O\ar[r, >->] & C_{12} \ar[r, >->] & \cdots \ar[r, >->]  & C_{1n} \ar[d, ->>]\\
    &&&  \ddots & \vdots \ar[d, ->>]\\
    &&&&  O
    \end{tikzcd}
    \end{equation} and whose morphisms are vertical natural transformations between these diagrams; that is, pointwise vertical maps such that any square that is formed with two horizontal and two vertical boundaries is a square in $\dcat C$, and any square with four vertical boundaries is a commutative square in the underlying vertical category $\cat V_{\dcat C}$.

    One can check that these assemble into a simplicial category $T^+_\bullet\dcat C$, where the face map $d_i$ removes the $i$th row and column (and composes when appropriate) and degeneracy maps simply insert identity maps and identity squares.
\end{definition}

\begin{proposition}\label{prop:TplustoT}
    If $\dcat C$ is proto-Waldhausen, then the forgetful map $U\colon T^+_\bullet\dcat C\to T_\bullet\dcat C$ is a map of simplicial categories which is a homotopy equivalence after realization.
\end{proposition}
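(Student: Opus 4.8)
The plan is to reduce the statement to a levelwise claim and then, in each simplicial degree, to produce an explicit homotopy inverse to $U_n\colon T^+_n\dcat C\to T_n\dcat C$ built from the proto-Waldhausen structure. That $U$ is a map of simplicial categories is a routine check: forgetting the cofiber objects $C_{ij}$ and retaining only the top row $C_0\cof\dots\cof C_n$ commutes with the face maps (which delete a row and column and compose) and the degeneracy maps (which insert identities). Since $\abs{T^+_\bullet\dcat C}$ and $\abs{T_\bullet\dcat C}$ are the realizations of the associated bisimplicial sets $([m],[n])\mapsto N_mT^+_n\dcat C$ and $([m],[n])\mapsto N_mT_n\dcat C$, and realization of bisimplicial sets carries levelwise weak equivalences to weak equivalences, it suffices to show that $BU_n\colon BT^+_n\dcat C\to BT_n\dcat C$ is a homotopy equivalence for every $n$.

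To this end, I would construct a section $s_n\colon T_n\dcat C\to T^+_n\dcat C$ of $U_n$ by iterating the span completion furnished by the section $s$ of $i^*$ in \cref{defn:protoWald cat}. Given a sequence $C_0\cof C_1\cof\dots\cof C_n$, the vertical map $C_0\quot O$ supplied by terminality of $O$ in $\cat{V}_{\dcat{C}}$ produces a span $O\twoheadleftarrow C_0\cof C_1$, which $s$ completes to a distinguished square defining $C_{01}$ together with a vertical map $C_1\quot C_{01}$. Feeding the resulting span $C_{01}\twoheadleftarrow C_1\cof C_2$ back into $s$ defines $C_{02}$, and so on across the first row; repeating the procedure on each successive row fills in the entire staircase \eqref{eqn1}. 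Functoriality of $s$ in vertical natural transformations guarantees that this assignment is itself functorial, and by construction $U_ns_n=\id_{T_n\dcat C}$, since the top row is left untouched.

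It then remains to compare $s_nU_n$ with the identity on $T^+_n\dcat C$. Here I would use the natural transformation $w\colon si^*\Rightarrow\id$ of \cref{defn:protoWald cat}: at each stage of the iterated completion it supplies a vertical weak equivalence from the freshly completed cofiber to the one recorded in the original object of $T^+_n\dcat C$, and the naturality of $w$ ensures that these comparison maps assemble into a vertical natural transformation $s_nU_n\Rightarrow\id_{T^+_n\dcat C}$ whose components are genuine morphisms in $T^+_n\dcat C$. Since a natural transformation induces a homotopy on classifying spaces, $B(s_nU_n)\simeq\id$; combined with $U_ns_n=\id$, this exhibits $BU_n$ and $Bs_n$ as mutually inverse homotopy equivalences, establishing the levelwise claim and hence the proposition.

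The main obstacle is the bookkeeping in the two inductive constructions. For $s_n$ one must verify that every square produced by the iterated span completion is of the appropriate type in $\dcat C$, so that $s_n$ genuinely lands in $T^+_n\dcat C$ rather than merely producing a commuting diagram. For the comparison transformation one must check that the maps obtained from $w$ fit together so that \emph{every} square appearing in the candidate morphism $s_nU_n\Rightarrow\id$ is admissible, namely that the mixed squares (two horizontal and two vertical edges) are squares in $\dcat C$ and the purely vertical squares commute in $\cat{V}_{\dcat{C}}$. Both are diagram chases that reduce to repeated application of conditions (i) and (ii), but they demand careful tracking of which vertical maps and squares are glued at each step.
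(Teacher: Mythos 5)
Your proposal is correct and follows essentially the same route as the paper: a levelwise section built by iterated span completion using condition (i), a comparison natural transformation onto the identity built from condition (ii), and the reduction to levelwise equivalences via the realization lemma applied to the simplicial map $U$ (the paper likewise observes that the sections fail to be simplicial, so your levelwise framing is exactly right). The only compression is that, beyond the first stage, the components of the comparison transformation are not single components of $w$ but composites $D_{i+1,j+1}\twoheadrightarrow X\twoheadrightarrow C_{i+1,j+1}$, where the first map comes from functoriality of the section $s$ applied to a morphism of spans and the second is a component of $w$ --- precisely the bookkeeping you flag at the end.
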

\begin{proof}
    For each $n$, the map $U_n$ takes an object in $T^+_n\dcat C$ (that is, a diagram as in \ref{eqn1}) to its top row. This is clearly functorial, and it is straightforward to verify that it assembles into a map of simplicial categories.

    We now define a section functor $F_n\colon T_n\dcat C\to T^+_n\dcat C$ for each $n$. This assignment takes an object \[C_0\cof C_1\cof \cdots\cof C_n\] in $T_n\dcat C$ to the diagram
    \[\begin{tikzcd}
        C_{0}\ar[rd, phantom, "\square"] \ar[d, ->>]\ar[r, >->] & C_{1} \ar[r, >->]\ar[d, ->>] \ar[rd, phantom, "\square"] & \cdots \ar[r, >->]  \ar[rd, phantom, "\square"]  & C_{n} \ar[d, ->>] \\
    O \ar[r, >->]& D_{01}\ar[rd, phantom, "\square"]\ar[d, ->>] \ar[r, >->]  & \cdots \ar[r, >->] \ar[rd, phantom, "\square"]  & D_{0n} \ar[d, ->>]\\
    &O\ar[r, >->] & \cdots \ar[r, >->]  & D_{1n} \ar[d, ->>]\\
    &&  \ddots & \vdots \ar[d, ->>]\\
    &&&  O
    \end{tikzcd}
    \] The rest of the data here is constructed using condition (i) of \cref{defn:protoWald cat}\footnote{In the definition, the existence of a section $s$ is a property rather than additional data, so a priori there could be several choices of $s$ to complete the spans; each such choice gives rise to a different section $F_n$.} to complete the spans sequentially as follows:
    \[\begin{tikzcd}
        C_0 \ar[rd, phantom, "\square"]\ar[d, ->>]\ar[r, >->] & C_1\ar[d, ->>, dashed]\\
        O \ar[r, >->, dashed] & D_{01}\\
    \end{tikzcd} \ ; \
    \begin{tikzcd}
        C_i \ar[rd, phantom, "\square"]\ar[d, ->>]\ar[r, >->] &  C_{i+1}\ar[d, ->>, dashed]\\
        D_{0 i} \ar[r, >->, dashed] & D_{0i+1}\\
    \end{tikzcd} \ ; \
    \begin{tikzcd}
        D_{i, i+1}\ar[rd, phantom, "\square"] \ar[d, ->>]\ar[r, >->] &  D_{i, i+2}\ar[d, ->>, dashed]\\
        O \ar[r, >->, dashed] & D_{i+1,i+2}\\
    \end{tikzcd} \ ; \
    \begin{tikzcd}
        D_{i,j} \ar[rd, phantom, "\square"]\ar[d, ->>]\ar[r, >->] & D_{i,j+1}\ar[d, ->>, dashed]\\
        D_{i+1,j} \ar[r, >->, dashed] & D_{i+1,j+1}\\
    \end{tikzcd}.\] A morphism of sequences in $T_n\dcat C$ induces a morphism in $T^+_n\dcat C$ between these span completions, and this is functorial as a consequence of the functoriality of the span completions from condition (i).

    Clearly $U_n F_n=\id$. To conclude our result, it suffices to construct a natural transformation $\tau\colon F_n U_n\Rightarrow\id$ for each $n$, as this natural transformation will realize to a homotopy. Given an object $C\in T^+_n\dcat C$ as in diagram (\ref{eqn1}), we need to construct the data of a vertical natural transformation
    \[
\begin{tikzcd}[row sep=tiny, column sep=tiny]
                              & C_0 \ar[rr,>->] \ar[dd, ->>]&                                            & C_1\ar[rr, >->]\ar[dd, ->>]   &                   & \cdots\ar[rr, >->] &                               & C_n\ar[dd, ->>]\\
C_0\ar[ur,equal,gray] \ar[rr, >->, crossing over]\ar[dd, ->>] &                 & C_1\ar[ur,equal,gray] \ar[rr, >->, crossing over]  &                               & \cdots\ar[rr, >->] &                   & C_n \ar[ur,equal,gray] &  \\
                              & O   \ar[rr,>->] &                                            & C_{01}\ar[rr, >->]\ar[dd, ->>]&                   & \cdots\ar[rr, >->] &                               & C_{0n}  \ar[dd, ->>]  \\
O\ar[ur,equal,gray]  \ar[rr, >->]              &                 & D_{01}\ar[rr, >->, crossing over]\ar[dd, ->>] \ar[from=uu,->>, crossing over]            &                               & \cdots\ar[rr, >->] &                   & D_{0n} \ar[from=uu,->>, crossing over] & \\
                              &                 &                                            & O \ar[rr, >->]                &                   & \cdots\ar[rr, >->] &                               & C_{1n} \ar[dd, ->>] & & C\\
                              &                 & O\ar[rr, >->] \ar[ur,equal]                &                               & \cdots\ar[rr, >->] &                   & D_{1n}\ar[dd, ->>]\ar[from=uu,->>, crossing over]&  & F_n U_n (C)\ar[ur,->>,"\tau_C",gray] &\\
                              &                 &                                            &                               &                   &    \ddots         &                               & \vdots\ar[dd, ->>]\\
                              &                 &                                            &                               &    \ddots         &                   &  \vdots\ar[dd, ->>]           & \\
                              &                 &                                            &                               &                   &                   &                               & O\\
                              &                 &                                            &                               &                   &                   &      O  \ar[ur,equal,gray]         & \\
    \end{tikzcd}
    \] We define the required vertical maps inductively, starting left to right in the top row and then moving on to the next. To illustrate an arbitrary step, suppose that we have constructed the data
    \begin{equation}\label{eqn2}\begin{tikzcd}[row sep=tiny,column sep=tiny]
        & C_{i,j}\ar[rr, >->]\ar[dd,->>] & & C_{i,j+1}\ar[dd,->>] \\
        D_{i,j}\ar[ur,->>,gray]\ar[rr, >->, crossing over]\ar[dd,->>] & & D_{i,j+1}\ar[ur,->>,gray] & \\
        & C_{i+1 ,j}\ar[rr,>->] & & C_{i+1,j+1}\\
        D_{i+1,j}\ar[ur,->>,gray]\ar[rr,>->] & & D_{i+1,j+1}\ar[from=uu,->>, crossing over] &\\
      \end{tikzcd}\end{equation} where all squares are  either commutative or distinguished as appropriate. We first use condition (i) of \cref{defn:protoWald cat} to complete the span on the back face to an object $X\in\dcat C$. Since the diagram \ref{eqn2} contains the data of a map of spans, we get an induced diagram as below left where all squares are either commutative or distinguished.
      \[\begin{tikzcd}[row sep=tiny,column sep=tiny]
        & C_{i,j}\ar[rr, >->]\ar[dd,->>] & & C_{i,j+1}\ar[dd,->>] \\
        D_{i,j}\ar[ur,->>,gray]\ar[rr, >->, crossing over]\ar[dd,->>] & & D_{i,j+1}\ar[ur,->>,gray] & \\
        & C_{i+1 ,j}\ar[rr,>->] & & X\\
        D_{i+1,j}\ar[ur,->>,gray]\ar[rr,>->] & & D_{i+1,j+1}\ar[ur,->>,gray,dashed]\ar[from=uu,->>, crossing over] &\\
      \end{tikzcd} \hspace{1.5cm}
      \begin{tikzcd}[row sep=tiny,column sep=tiny]
        & C_{i,j}\ar[rr, >->]\ar[dd,->>] & & C_{i,j+1}\ar[dd,->>] \\
        C_{i,j}\ar[ur,equal,gray]\ar[rr, >->, crossing over]\ar[dd,->>] & & C_{i,j+1}\ar[ur,equal,gray] & \\
        & C_{i+1 ,j}\ar[rr,>->] & & C_{i+1,j+1}\\
        C_{i+1,j}\ar[ur,equal,gray]\ar[rr,>->] & & X\ar[ur,->>,gray,dashed,"w_{C_{i+1,j+1}}"'] \ar[from=uu,->>, crossing over]&\\
      \end{tikzcd}\] Next, we use the data on objects of condition (ii) of \cref{defn:protoWald cat} to get a diagram as above right, where again, all squares are either commutative or distinguished. The composite of these two diagrams in the vertical (gray) direction yields the required vertical map $$D_{i+1,j+1}\twoheadrightarrow X\twoheadrightarrow C_{i+1,j+1}$$ as well as the required squares.

      For the naturality of $\tau$, we must check that for every vertical natural transformation $X\twoheadrightarrow X'$ in $T^+_n\dcat C$, the resulting diagram
      \[
\begin{tikzcd}
  D_{ij}\rar[->>]\dar[->>,"w_{C_{ij}}"'] & D'_{ij}\dar[->>,"w_{C'_{ij}}"]\\
  C_{ij}\rar[->>] & C'_{ij}
\end{tikzcd}
\]
commutes in $\cat V_{\dcat C}$. This is precisely the naturality of $w$ given by condition (ii) of \cref{defn:protoWald cat}.
\end{proof}

\begin{remark}\label{rmk:sectionsnotsimplicial}
    Note that the sections $F_n$ constructed above do not necessarily assemble into a map of simplicial categories. Indeed, if we examine the action of the inner faces, having a simplicial map would require that the composite of two span completions as below left
    \[
    \begin{tikzcd}
       A\ar[rd, phantom, "\square"]\ar[r, >->] \ar[d, ->>] & B\ar[rd, phantom, "\square"]\ar[r, >->] \ar[d, ->>,dashed] & C \ar[d, ->>,dashed]\\
       A'\ar[r, >->, dashed]  & B'\ar[r, >->,dashed]  & C'
    \end{tikzcd} \hspace{2cm}
    \begin{tikzcd}
       A\ar[rrd, phantom, "\square"]\ar[r, >->] \ar[d, ->>] & B\ar[r, >->]  & C \ar[d, ->>,dashed]\\
       A'\ar[rr, >->, dashed]  &   & D
    \end{tikzcd}
    \] agree with the span completion of the composite horizontal maps as above right. However, we do not expect this condition to hold in most examples of interest; for instance, when span completions are obtained from pushouts, the objects $C'$ and $D$ above will only agree \emph{up to isomorphism}.
\end{remark}


We now construct the homotopy equivalence between $T^+_\bullet\dcat C$ and $S^{\square}_\bullet\dcat C$.

\begin{proposition}
   For any squares category $\dcat C$, the forgetful map $U\colon T^+_\bullet\dcat C\to S^{\square}_\bullet\dcat C$ is a map of simplicial categories which is a homotopy equivalence after realization.
\end{proposition}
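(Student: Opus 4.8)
The plan is to follow the same strategy as \cref{prop:TplustoT}: construct a levelwise section of $U$ together with a natural transformation witnessing a deformation, and then pass to realizations. The key simplification compared to \cref{prop:TplustoT} is that no proto-Waldhausen hypothesis is available \emph{or needed} here, because $U_n$ only forgets data that can be reconstructed on the nose, without completing any spans. First I would pin down $U_n$ precisely. An object of $T^+_n\dcat C$ consists of a top row $C_0\cof C_1\cof\dots\cof C_n$ together with its vertical maps $C_j\quot C_{0j}$ (where $C_{00}:=O$) down to the sub-diagram underneath, and that sub-diagram -- the rows strictly below the top one -- is precisely an object of $S^{\square}_n\dcat C$, with $A_{ij}=C_{ij}$ for $i<j$ and $A_{ii}=O$ along the diagonal. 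The forgetful map $U_n$ simply discards the top row and its vertical maps, returning this underlying $S^{\square}_n$-diagram; that $U$ is a map of simplicial categories is immediate, since deleting the top row commutes with the face and degeneracy operations.

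Next I would define a levelwise section $G_n\colon S^{\square}_n\dcat C\to T^+_n\dcat C$ by re-attaching a top row that duplicates the second row. Given an $S^{\square}_n$-object $\{A_{ij}\}$, set $C_j:=A_{0j}$ (so in particular $C_0=A_{00}=O$) and declare every vertical map $C_j\quot C_{0j}$ from the new top row to be the identity. The resulting top-strip squares then have identity vertical boundaries and equal horizontal top and bottom edges, so they are the horizontal identity squares on the maps $A_{0j}\cof A_{0,j+1}$, which exist in any flat double category; hence $G_n$ indeed lands in $T^+_n\dcat C$. On a morphism, $G_n$ reuses the component $\tau_{0j}$ on the new top node $C_j$, and the additional squares that must be checked are either squares already present in the given $S^{\square}_n$-morphism or trivially commuting identity squares. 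By construction $U_nG_n=\id$.

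It then remains to connect $G_nU_n$ to $\id_{T^+_n\dcat C}$. The composite $G_nU_n$ leaves the $S^{\square}_n$-part untouched and only replaces the top row $C_0,\dots,C_n$ by the second row $O,C_{01},\dots,C_{0n}$. The natural transformation $\tau\colon\id\Rightarrow G_nU_n$ I would use has, as its components on the top-row nodes, the structural vertical maps $C_j\quot C_{0j}$ that are already part of the object, and identities on every other node. Checking that $\tau$ is a vertical natural transformation reduces to two observations: the squares formed along the top horizontal maps are exactly the top-strip squares of the original $T^+_n$-object, hence squares in $\dcat C$; and every remaining square has identity boundaries, so it is either a horizontal identity square or commutes trivially in $\cat V_{\dcat C}$. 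Naturality of $\tau$ in the object $C$ is inherited from the naturality of these structural maps, so $\tau$ is a natural transformation of endofunctors of $T^+_n\dcat C$, which realizes to a homotopy $B(G_nU_n)\simeq\id$.

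Finally I would assemble the conclusion. For each $n$, these two facts ($U_nG_n=\id$ and $B(G_nU_n)\simeq\id$) show that $BU_n$ is a homotopy equivalence with inverse $BG_n$. Since $U$ is simplicial, $BU_\bullet$ is a levelwise homotopy equivalence of simplicial spaces, and as these are classifying spaces of categories (hence good/proper simplicial spaces with cofibrant degeneracies) the realization lemma yields a homotopy equivalence $\abs{T^+_\bullet\dcat C}\to\abs{S^{\square}_\bullet\dcat C}$. I expect the main subtlety to be purely bookkeeping -- verifying that $G_n$ and $\tau$ genuinely satisfy the square-type conditions defining objects and morphisms of $T^+_n\dcat C$ -- rather than any genuine obstruction, precisely because nothing needs to be completed. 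As in \cref{rmk:sectionsnotsimplicial}, the sections $G_n$ need not assemble into a simplicial map, so it is essential that the argument be made levelwise and then fed into the realization lemma, rather than attempting to produce a simplicial homotopy inverse directly.
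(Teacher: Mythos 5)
Your proposal is correct and matches the paper's proof essentially step for step: the same section (re-attach a top row duplicating the row $O\cof C_{01}\cof\dots\cof C_{0n}$ with identity vertical maps and identity squares), the same natural transformation $\id\Rightarrow G_nU_n$ built from the structural vertical maps $C_j\quot C_{0j}$ and identities elsewhere, and the same levelwise-then-realize conclusion, including the observation that the sections fail to be simplicial. No gaps.
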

\begin{proof}
    For each $n$, the map $U_n$ takes an object in $T^+_n\dcat C$ (that is, a diagram as in \cref{eqn1}) to the subdiagram obtained by deleting its top row, which is an object in $S^{\square}_\bullet\dcat C$. This is clearly functorial, and it is straightforward to verify that it assembles into a map of simplicial categories.

    We now define a section functor $F_n\colon S^{\square}_n\dcat C\to T^+_n\dcat C$ for each $n$. This assignment takes an object in $S_n\dcat C$ as depicted below left to the object of $T^+_n\dcat C$ depicted below right.
    \[\begin{tikzcd}
    &\dar[phantom]&&&\\
    O \ar[r, >->]& C_{01}\ar[rd, phantom, "\square"]\ar[d, ->>] \ar[r, >->] & C_{02} \ar[r, >->] \ar[d, ->>] \ar[rd, phantom, "\square"] & \cdots \ar[r, >->] \ar[rd, phantom, "\square"]  & C_{0n} \ar[d, ->>]\\
    &O\ar[r, >->] & C_{12} \ar[r, >->] & \cdots \ar[r, >->]  & C_{1n} \ar[d, ->>]\\
    &&&  \ddots & \vdots \ar[d, ->>]\\
    &&&&  O
    \end{tikzcd} \ \
    \begin{tikzcd}
        O\ar[rd, phantom, "\square"] \ar[d, equal]\ar[r, >->] & C_{01} \ar[r, >->]\ar[d, equal] \ar[rd, phantom, "\square"] & C_{02} \ar[r, >->] \ar[d, equal] \ar[rd, phantom, "\square"] & \cdots \ar[r, >->]  \ar[rd, phantom, "\square"]  & C_{0n} \ar[d, equal] \\
    O \ar[r, >->]& C_{01}\ar[rd, phantom, "\square"]\ar[d, ->>] \ar[r, >->] & C_{02} \ar[r, >->] \ar[d, ->>] \ar[rd, phantom, "\square"] & \cdots \ar[r, >->] \ar[rd, phantom, "\square"]  & C_{0n} \ar[d, ->>]\\
    &O\ar[r, >->] & C_{12} \ar[r, >->] & \cdots \ar[r, >->]  & C_{1n} \ar[d, ->>]\\
    &&&  \ddots & \vdots \ar[d, ->>]\\
    &&&&  O
    \end{tikzcd}
    \] Clearly $F_n$ is a functor, defined on maps in the evident way. Note that these functors do not assemble into a simplicial map, as they do not commute with $d_0$. Moreover, we have $U_nF_n=\id$; we conclude our proof by constructing a natural transformation $\tau\colon\id\Rightarrow F_nU_n$.

For each object $X\in T^+_n\dcat C$, the component $\tau_X$ is the vertical natural transformation
    \[
\begin{tikzcd}[row sep=tiny, column sep=tiny]
                              & O \ar[rr,>->] \ar[dd, equal]&                                            & C_{01}\ar[rr, >->]\ar[dd, equal]   &                   & \cdots\ar[rr, >->] &                               & C_{0n}\ar[dd, equal]\\
C_0\ar[ur,->>,gray] \ar[rr, >->,crossing over]\ar[dd, ->>] &                 & C_1\ar[ur,->>,gray] \ar[rr, >->, crossing over]  &                               & \cdots\ar[rr, >->] &                   & C_n \ar[ur,->>,gray] &  \\
                              & O   \ar[rr,>->] &                                            & C_{01}\ar[rr, >->]\ar[dd, ->>]&                   & \cdots\ar[rr, >->] &                               & C_{0n}  \ar[dd, ->>]  \\
O\ar[ur,equal,gray]  \ar[rr, >->]              &                 & C_{01}\ar[ur,equal,gray] \ar[rr, >->,crossing over]\ar[dd, ->>]   \ar[from=uu,->>, crossing over]          &                               & \cdots\ar[rr, >->] &                   & C_{0n}\ar[ur,equal]\ar[from=uu,->>, crossing over]  & \\
                              &                 &                                            & O \ar[rr, >->]                &                   & \cdots\ar[rr, >->] &                               & C_{1n} \ar[dd, ->>] \\
                              &                 & O\ar[rr, >->] \ar[ur,equal,gray]                &                               & \cdots\ar[rr, >->] &                   & C_{1n}\ar[ur,equal,gray] \ar[dd, ->>]\ar[from=uu,->>, crossing over]&  \\
                              &                 &                                            &                               &                   &    \ddots         &                               & \vdots\ar[dd, ->>]\\
                              &                 &                                            &                               &    \ddots         &                   &  \vdots\ar[dd, ->>]           & \\
                              &                 &                                            &                               &                   &                   &                               & O\\
                              &                 &                                            &                               &                   &                   &      O  \ar[ur,equal,gray]  & \\
    \end{tikzcd}
    \] It is straightforward to verify that $\tau$ is natural.
\end{proof}

As an immediate corollary, we have the following result.

\begin{theorem}\label{thm:square and S comparison}
    If $\dcat C$ is a proto-Waldhausen category, there is an equivalence of spaces $\abs{T_\bullet\dcat C}\xrightarrow{\simeq}\abs{S^{\square}_\bullet \dcat C}$.
\end{theorem}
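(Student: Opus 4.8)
The plan is to read off the statement as an immediate consequence of the two preceding propositions, exactly in the spirit of Waldhausen's Thomason comparison. Those propositions produce a span of simplicial categories
\[
T_\bullet\dcat C \xleftarrow{U} T^+_\bullet \dcat C \xrightarrow{U} S^{\square}_\bullet\dcat C,
\]
in which each forgetful functor $U$ becomes a homotopy equivalence after realization. Applying $\abs{-}$ therefore yields
\[
\abs{T_\bullet\dcat C} \xleftarrow{\simeq} \abs{T^+_\bullet \dcat C} \xrightarrow{\simeq} \abs{S^{\square}_\bullet\dcat C}.
\]
To finish, I would select a homotopy inverse of the left-hand equivalence and compose it with the right-hand equivalence, giving the desired map $\abs{T_\bullet\dcat C}\xrightarrow{\simeq}\abs{S^{\square}_\bullet \dcat C}$; since homotopy equivalences are closed under composition, this is again an equivalence.

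It is worth emphasizing \emph{why} the comparison has to be routed through the auxiliary object $T^+_\bullet\dcat C$ rather than being witnessed by a single simplicial map. As recorded in \cref{rmk:sectionsnotsimplicial}, the section functors $F_n$ appearing in both propositions do not assemble into maps of simplicial categories, because the span completions furnished by condition (i) of \cref{defn:protoWald cat} are only determined up to isomorphism and hence fail to strictly commute with the inner face maps. The zigzag strategy sidesteps this: the forgetful maps $U$ are strictly simplicial in both directions, and the failure of simpliciality is quarantined inside the homotopies, which are built from the natural transformations $\tau$ constructed degreewise in each proposition.

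Consequently there is no remaining obstacle at the level of this corollary. All the genuine work---constructing the sectionwise functors $F_n$, completing spans via the proto-Waldhausen structure, and producing the natural transformations $\tau$ realizing $F_nU_n\simeq\id$ and $U_nF_n=\id$ at each simplicial level---is entirely contained in the two propositions. The present statement is then purely formal, amounting only to the observation that a span of homotopy equivalences of spaces can be inverted on one leg and composed.
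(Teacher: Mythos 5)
Your proposal is correct and matches the paper's own argument exactly: the theorem is deduced as an immediate corollary of \cref{prop:TplustoT} and the subsequent proposition, by inverting one leg of the realized span $\abs{T_\bullet\dcat C}\xleftarrow{\simeq}\abs{T^+_\bullet\dcat C}\xrightarrow{\simeq}\abs{S^{\square}_\bullet\dcat C}$. Your observation about why the zigzag through $T^+_\bullet\dcat C$ is necessary (the sections $F_n$ failing to be simplicial, per \cref{rmk:sectionsnotsimplicial}) is also faithful to the paper's discussion.
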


In particular we deduce that for any Waldhausen category the $K$-theory space produced by our $S^{\square}_\bullet$-construction, whose staircase diagrams involve pushouts up to weak equivalence, recovers the correct space up to homotopy.

\begin{corollary}\label{cor:WaldhausenisProtoWaldhausen}
    The $S^{\square}_\bullet$-construction of \cref{defn:S-dot of cat w squares} gives another model for the $K$-theory of a Waldhausen category.
\end{corollary}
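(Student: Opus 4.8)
The plan is simply to assemble the results already established in this section. Given a Waldhausen category $\cat C$, I would first pass to its associated squares category $\dcat C$ as constructed in \cref{ex:waldhausen}. By \cref{ex:waldisprotowald}, this squares category is proto-Waldhausen, so the hypotheses of \cref{thm:square and S comparison} are satisfied and we obtain a homotopy equivalence $\abs{T_\bullet\dcat C}\xrightarrow{\simeq}\abs{S^{\square}_\bullet\dcat C}$ after realization.

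Next I would recall from \cref{ex:waldhausen} that the $T_\bullet$-construction of $\dcat C$ computes precisely the Thomason model for the $K$-theory of $\cat C$, which Waldhausen shows agrees with the classical $S_\bullet$-construction (see \cite[Section 1.3]{waldhausen:1983}). In particular, $K^{\square}(\dcat C)=\Omega_O\abs{T_\bullet\dcat C}$ is the $K$-theory space of $\cat C$ in the usual sense.

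It then remains only to check that the equivalence of \cref{thm:square and S comparison} descends to based loop spaces. The zig-zag $T_\bullet\dcat C\leftarrow T^+_\bullet\dcat C\to S^{\square}_\bullet\dcat C$ is built from forgetful functors and section functors, all of which send the fully-degenerate diagram (every entry equal to $O$) to the corresponding fully-degenerate diagram; hence the equivalence is pointed with respect to the distinguished basepoint coming from $O$. Taking $\Omega$ therefore yields $\Omega\abs{S^{\square}_\bullet\cat C}\simeq\Omega_O\abs{T_\bullet\dcat C}=K^{\square}(\dcat C)$, and combining with the previous paragraph identifies this space with Waldhausen's $K(\cat C)$.

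The argument is essentially formal, so there is no serious obstacle; the only point requiring genuine care is the basepoint-compatibility just discussed, which ensures that the unbased equivalence of realizations induces an equivalence on loop spaces and not merely on the spaces themselves. One might also remark explicitly, as flagged in \cref{rmk:weaklyexact}, that the objects of $S^{\square}_\bullet\cat C$ record pushouts only \emph{up to weak equivalence} rather than on the nose, so that $S^{\square}_\bullet\cat C$ need not agree with $wS_\bullet\cat C$ before realization — the content of the corollary is precisely that this discrepancy washes out after passing to $K$-theory, which is exactly what the chain of equivalences above delivers.
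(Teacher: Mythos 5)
Your proposal is correct and follows exactly the paper's route: the paper's proof is a one-line citation of \cref{ex:waldisprotowald} and \cref{thm:square and S comparison}, combined with the identification (from \cref{ex:waldhausen}) of $K^{\square}(\dcat C)$ with the Thomason model. Your additional check of basepoint-compatibility when passing to loop spaces is a reasonable detail the paper leaves implicit, but it does not change the argument.
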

\begin{proof}
  This is a consequence of \cref{ex:waldisprotowald,thm:square and S comparison}.
\end{proof}

\section{Examples}\label{sec:examples}

 In this section we include a compilation of examples of squares categories. For each of them, we determine their classes of horizontal and vertical weak equivalences, and whether they are proto-Waldhausen.  

\subsection{Finite sets}\label{ex:FinSet}
 
    The category $\Fin\Set$ of finite sets admits a squares structure where horizontal morphisms are injections, vertical morphisms are opposites of injections, and squares are underlying pushout squares. The distinguished object is the empty set.  This example can be found in \cite[Example 1.12]{CKMZ:squares}. 

    Interpreting \cref{defn:weakequiv} in this context, we see that horizontal weak equivalences are bijections and the vertical weak equivalences are opposites of bijections. This example is proto-Waldhausen since every span can be completed\[
    \begin{tikzcd}
        A \ar[r, >->] \ar[d, ->>] & B\\
        C
    \end{tikzcd} \leadsto \begin{tikzcd}
        A \ar[r, >->] \ar[d, ->>] \ar[rd, phantom, "\square"] & B\ar[d, ->>] \\
        C \ar[r, >->] & C\cup B\setminus A
    \end{tikzcd} \] and $C\cup B\setminus A$ is uniquely determined up to isomorphism. Note that these squares are stable: a commutative square of finite sets and injections is a pushout if and only if it is also a pullback.

\subsection{Polytopes}\label{ex:polytopes}
 
   Let $G$ be a subgroup of the group of isometries of $\RR^n$. There is a squares category $\dcat P^n_G$ whose objects are polytopes in $\RR^n$, where a polytope is a finite union of $n$-simplices in $\RR^n$ (see \cite[Section 2.1]{malkiewich} for a more detailed definition). The horizontal morphisms are inclusions in $\RR^n$ of the form $g\cdot P\subseteq Q$ where $g\in G$ is an isometry, and the vertical morphisms are opposites of these. The distinguished object is the empty set, and a square\[
    \begin{tikzcd}
        P \ar[r, >->, "g_0"] \ar[d, ->>, swap, "g_1^{\op}"] & Q \ar[d, ->>, "g_3^{\op}"] \\
        Q' \ar[r, >->, swap, "g_2"] & R
    \end{tikzcd}
    \] is an underlying commutative square so that $Q' = g_1^{-1}P\cap g_2^{-1}R$ and $g_0 P\cup_{g_0g_1 Q'} g_3 R = Q$. We emphasize that the intersection is taken in the category of polytopes and the union is taken as subsets of $\RR^n$, so for example, the following is a square
    \[\begin{tikzpicture}[scale=0.75]
    
    \draw[thick, fill=blue!20] (0,0) --(1,0) -- (1,1) -- (0,0);
    \draw[blue, thick, fill=blue!40] (0.5, 0.5) -- (1,0) -- (0.75, 0.75) -- (0.5, 0.5);

    \path (3,0.5) node[font=\Huge] {\(\cof\)};

    \draw[blue!60, fill=blue!20] (6,1) -- (6,0)-- (5,0) -- (6,1);
    \draw[thick] (5,1) --(6,1) -- (6,0)-- (5,0) -- (5,1);
    \draw[blue, thick, fill=blue!40] (5.5, 0.5) -- (6,0) -- (5.75, 0.75) -- (5.5, 0.5);

    \path (0.5,-1.5) node[font=\Huge, rotate=-90] {\(\quot\)};
    \path (5.5,-1.5) node[font=\Huge, rotate=-90] {\(\quot\)};

    \draw[blue, thick, fill=blue!40] (0.5,-3.5) -- (1,-4) -- (0.75, -3.25) -- (0.5, -3.5);

    \path (3,-3.5) node[font=\Huge] {\(\cof\)};
    
    \draw[thick] (5,-4) -- (6,-3) -- (5,-3) -- (5,-4);
    \draw[blue, thick, fill=blue!40] (5.5, -3.5) -- (6, -4) -- (5.75, -3.25) -- (5.5, -3.5);
    \end{tikzpicture},\] 
    even though the intersection of $P$ and $R$ as subsets of $\RR^2$ would include the entire diagonal line of the square.

      The weak equivalences in this squares category are given by the isometries in $G$: the horizontal equivalences are actual isometries and the vertical equivalences are opposites of isometries. This example is proto-Waldhausen, again essentially because we can take complements. For instance, we have\[\begin{tikzpicture}[scale=0.75]
    
    \draw[thick, fill=blue!20] (0,0) --(1,0) -- (1,1) -- (0,0);
    \draw[blue, thick, fill=blue!40] (0.5, 0.5) -- (1,0) -- (0.75, 0.75) -- (0.5, 0.5);

    \path (3,0.5) node[font=\Huge] {\(\cof\)};

    \draw[blue!60, fill=blue!20] (6,1) -- (6,0)-- (5,0) -- (6,1);
    \draw[thick] (5,1) --(6,1) -- (6,0)-- (5,0) -- (5,1);
    \draw[blue, thick, fill=blue!40] (5.5, 0.5) -- (6,0) -- (5.75, 0.75) -- (5.5, 0.5);

    \path (0.5,-1.5) node[font=\Huge, rotate=-90] {\(\quot\)};

    \draw[blue, thick, fill=blue!40] (0.5,-3.5) -- (1,-4) -- (0.75, -3.25) -- (0.5, -3.5);

 \path (7,-1.5) node[font=\Huge, rotate=0] {\(\leadsto\)};
    \end{tikzpicture}
~~~~~~~~~~~~~~~~~~~~~~~~~~~~~~~~~~~~~~~~~~~~~~~~~~~~~~~~~~~~~~~~~~~~
    \begin{tikzpicture}[scale=0.75]
    
    \draw[thick, fill=blue!20] (0,0) --(1,0) -- (1,1) -- (0,0);
    \draw[blue, thick, fill=blue!40] (0.5, 0.5) -- (1,0) -- (0.75, 0.75) -- (0.5, 0.5);

    \path (3,0.5) node[font=\Huge] {\(\cof\)};

    \draw[blue!60, fill=blue!20] (6,1) -- (6,0)-- (5,0) -- (6,1);
    \draw[thick] (5,1) --(6,1) -- (6,0)-- (5,0) -- (5,1);
    \draw[blue, thick, fill=blue!40] (5.5, 0.5) -- (6,0) -- (5.75, 0.75) -- (5.5, 0.5);

    \path (0.5,-1.5) node[font=\Huge, rotate=-90] {\(\quot\)};
    \path (5.5,-1.5) node[font=\Huge, rotate=-90] {\(\quot\)};

    \draw[blue, thick, fill=blue!40] (0.5,-3.5) -- (1,-4) -- (0.75, -3.25) -- (0.5, -3.5);

    \path (3,-3.5) node[font=\Huge] {\(\cof\)};
    
    \draw[thick] (5,-4) -- (6,-3) -- (5,-3) -- (5,-4);
    \draw[blue, thick, fill=blue!40] (5.5, -3.5) -- (6, -4) -- (5.75, -3.25) -- (5.5, -3.5);
    \end{tikzpicture}\] 
    which is a square in $\dcat P^2_G$ (we do not require polytopes to be convex). In general (suppressing the data of the specified isometries in the morphisms), a span $P \twoheadleftarrow Q\cof R$ corresponds to a sequence $P\cof Q\cof R$ and may be completed to a square via $P \cof P\cup (\overline{Q\setminus R})\cof R$; here we are using the fact that the complement of a polytope inclusion is again a polytope and the union of two polytopes is a polytope. Any other polytope $Q'$ that completes the span is necessarily isometric to $P\cup (\overline{Q\setminus R})$ (since the inclusion $Q'\cof R$ is an isometry onto its image).

\subsection{$SK$-manifolds}\label{ex:og SK}

In \cite{hoekzema/merling/murray/rovi/semikina:2021}, Hoekzema--Merling--Murray--Rovi--Semikina define a category ${\rm Mfld}_d^{\bndry}$ with squares whose $K_0^{\square}$ is an $SK$-group for manifolds with boundary; these groups encode how manifolds of a fixed dimension may be ``cut up'' and ``pasted'' back together \cite{KKNO73}. The objects of ${\rm Mfld}_d^{\bndry}$ are compact, orientable $d$-manifolds with boundary and $\cat H{\rm Mfld}_d^{\bndry} = \cat V {\rm Mfld}_d^{\bndry} = \hom({\rm Mfld}_d^{\bndry})$ are \textit{$SK$-embeddings}, which are embeddings with an additional condition on the boundary (see \cite[Definition 4.1]{hoekzema/merling/murray/rovi/semikina:2021}). The distinguished object is the empty manifold $\varnothing$ and squares are pushout squares.

In order to make ${\rm Mfld}_d^{\bndry}$ a squares category in the sense of \cref{defn:squarescat}, we need to take opposites of the vertical morphisms so that $\varnothing$ is initial in horizontal morphisms and terminal in vertical ones. As noted in \cref{rmk:O choices}, this change does not affect the resulting $K$-theory space. For clarity, we will denote this squares category $\widetilde{\rm{Mfld}}_d^{\bndry}$ to distinguish it from the original definition.

 The horizontal weak equivalences in $\widetilde{{\rm Mfld}}_d^{\bndry}$ are diffeomorphisms (rel boundary) and vertical weak equivalences are the opposites of diffeomorphisms (rel boundary). However, the squares category $\widetilde{\rm Mfld}_d^{\bndry}$ is not proto-Waldhausen because not every span of morphisms can be completed to a square. Recall that\[
\begin{tikzcd}
    N \ar[r, >->] \ar[d, ->>] & M\\
    M'
\end{tikzcd} \text{ in }\widetilde{\rm Mfld}_d^{\bndry} ~\leadsto \begin{tikzcd}
    N \ar[r, hook] & M\\
    M' \ar[u, hook]
\end{tikzcd} \text{ in }{\rm Mfld}_d^{\bndry}
\] where $\hookrightarrow$ denotes the $SK$-embeddings from \cite[Definition 4.1]{hoekzema/merling/murray/rovi/semikina:2021}. In particular, taking $M'=\varnothing$, completing the span above to a square is like asking for $N\hookrightarrow M$ to have a complement. Although the conditions on $SK$-embeddings ensure that $\overline{M\setminus N}$ is an object of ${\rm Mfld}_d^{\bndry}$, the commuting square\[
\begin{tikzcd}
    N \ar[r, hook] & M\\
    \varnothing \ar[u, hook] \ar[r, hook] & \overline{M\setminus N} \ar[u, hook]
\end{tikzcd}
\]is not a pushout in ${\rm Mfld}_d^{\bndry}$. 

There are two potential approaches one could take in order to solve this shortcoming. One option is to give an $S_\bullet$-construction for ${\rm Mfld}^d_{\bndry}$ following \cref{rmk:augmented Sdot}, with the augmentation $\cat A$ given by cylinders $N\times I$ for $N$ a closed, orientable $(d-1)$-manifold. One could define a notion of augmented proto-Waldhausen category and make analogous arguments, although we do not pursue these ideas in this paper. Another option is to adjust the definition of ${\rm Mfld}_d^{\bndry}$ so that the commutative squares above are distinguished. One could define a notion of \textit{$SK$-equivalence} and then ask for squares to be ``pushouts up to $SK$-equivalences,'' much like the Thomason construction. We intend to further explore these ideas, together with the resulting $K$-theory, in future work. 

\subsection{Partial monoids}\label{ex:partial monoid}

The next example is \cite[Example 7.1]{BOORS:2016}. Recall that a \textit{partial monoid} is a set $M$ with a \textit{partial operation} $*\colon M_2\to M$ for some $M_2\subseteq M\times M$. This partial operation is required to have a unit $1\in M$ and be associative when defined (c.f. \cite[Example 2.1]{BOORS:2016}). The \textit{nerve} of a partial monoid is the simplicial set $N_\bullet M$ with $N_0M=\{1\}$, $N_1M = M$, and for $k\geq 1$, $N_k M\subseteq M^{\times k}$ are the composable $k$-tuples, i.e. those $(m_1,\dots, m_k)$ so that $(m_1*\cdots*m_i, m_{i+1})\in M_2$ for all $i$. The face maps apply the operation $*$ in the appropriate slot and the degeneracies insert the unit. The \textit{classifying space}  $BM$ is the realization of this simplicial set.

There is a squares category $\dcat M$ whose $K^{\square}$-theory space is $\Omega BM$. The objects of $\dcat M$ are the elements $m\in M$ and the distinguished object is the unit. Horizontal and vertical morphisms are witnessed by right- and left-multiplication, respectively\[
\begin{tikzcd}
    a \ar[rr, >->, "{(a,b)\in M_2}"] && a*b
\end{tikzcd} \text{ and } \begin{tikzcd}
    a*b \ar[rr, ->>, "{(a,b)\in M_2}"] && b
\end{tikzcd} 
\] and squares witness the associativity of $*$,\[
\begin{tikzcd}
    c*a \ar[rr, >->, "{(c*a,b)\in M_2}"] \ar[dd, swap, ->>, "{(c,a)\in M_2}"]  \ar[rrdd, phantom, "\square"] && c*a*b \ar[dd, ->>, "{(c,a*b)\in M_2}"] \\
    &&\\
    a \ar[rr, >->, swap, "{(a,b)\in M_2}"]  && a*b 
\end{tikzcd}.
\] To see that $K^{\square}(\dcat M)\cong \Omega B M$, observe that the double nerve $\dcat{N}\dcat M$ is isomorphic as a simplicial set to the edgewise subdivision of $N_\bullet M$. Hence \[
\abs{T_\bullet \dcat M}\cong \abs{\dcat{N}\dcat{M}} \cong \abs{sd(N_\bullet M)}\cong B M
\] using the fact that the edgewise subsivision of a simplicial set does not change its geometric realization \cite{segal:1973}.

The horizontal and vertical weak equivalences in this squares category are simply the identities, and it is straightforward to check that this example is proto-Waldhausen. In fact, this is an example of a \textit{stable pointed double category} (a definition from \cite{BOORS:2016} which we recall later in \cref{defn:stable dbl cat}) which is a strictly stronger notion.

\subsection{Graphs}\label{ex:graphs}
   There are several ways to obtain a squares category from graphs:
   \begin{itemize}
    \item[(1)] The double category of graphs described in \cite[Example 7.3]{BOORS:2016} is a squares category, where the distinguished object is the empty graph. For a fixed ambient graph $G$, the objects of this double category are subgraphs $H\hookrightarrow G$, horizontal morphisms are full subgraph inclusions (rel $G$) and vertical morphisms are opposites of full subgraph inclusions (rel $G$). Note that a full subgraph inclusion $H'\hookrightarrow H$ is equivalently specified by a partition $V(H) = V(H') \amalg V(H')^c$. Squares are as described in \cite[Example 7.3]{BOORS:2016}, specified by a partition of vertices into three pieces; for instance, \[
    \begin{tikzpicture}[scale=0.75]
    \fill[blue] (0,0) circle (3pt);
    \fill (1,1) circle (3pt);
    \fill (0,1) circle (3pt);
    
    \draw (0.2,1) --(0.8,1);
    \draw (0,0.2) --(0,0.8);

    \path (3,0.5) node[font=\Huge] {\(\cof\)};

    \fill[blue] (5,0) circle (3pt);
    \fill (6,1) circle (3pt);
    \fill (5,1) circle (3pt);
    \fill (6,0) circle (3pt);
    
    \draw (5.2,1) --(5.8,1);
    \draw (5,0.2) --(5,0.8);
    \draw (5.2,0) --(5.8,0);
    \draw (6,0.2) --(6,0.8);

    \path (0.5,-1.5) node[font=\Huge, rotate=-90] {\(\quot\)};
    \path (5.5,-1.5) node[font=\Huge, rotate=-90] {\(\quot\)};

    \fill[blue] (0,-3) circle (3pt);

    \path (3,-3) node[font=\Huge] {\(\cof\)};

    \fill[blue] (5,-3) circle (3pt);
    \fill (6,-3) circle (3pt);
    
    \draw (5.2,-3) --(5.8,-3);
    \end{tikzpicture}
    \] is a square and the corresponding partition of vertices is\[
    \begin{tikzpicture}[scale=1.5]
    \fill (0,0) circle (3pt);
    \fill (1,1) circle (3pt);
    \fill (0,1) circle (3pt);
    \fill (1,0) circle (3pt);
    
    \draw[thick] (0.2,1) --(0.8,1);
    \draw[thick] (0,0.2) --(0,0.8);
    \draw[thick] (0.2,0) --(0.8,0);
    \draw[thick] (1,0.2) --(1,0.8);

    \draw[blue, thick] (0,0) circle (8pt);
    \node at (-0.5,0) {\textcolor{blue}{$1$}};
    
    \draw[green!50!black, thick] (1,0) circle (8pt);
    \node at (1.5,0) {\textcolor{green!50!black}{$3$}};
    
    \draw[orange, thick] (0.5, 1) ellipse (20pt and 10pt);
    \node at (0.5,1.5) {\textcolor{orange}{$2$}};
    \end{tikzpicture}.
    \]   
        \item[(2)] Horizontal morphisms are subgraph inclusions, vertical morphisms are opposites of subgraph inclusions, and squares are underlying commutative diagrams which are pushouts on the sets of vertices. The difference is that we are not working relative to a fixed graph $G$.
        \item[(3)] Horizontal morphisms are subgraph inclusions, vertical morphisms are opposites of subgraph inclusions, and squares are underlying pushouts in the category of finite graphs. The $K^{\square}$-theory of this squares category should essentially be the $K$-theory of the category with covering families from \cite[Definition 3.6]{Calle/Gould:24}. However, there is no comparison between square $K$-theory and the $K$-theory for categories with covering families at this time, although it is likely one could pursue such a comparison using ideas of \cite[Section 1.8]{waldhausen:1983}.
    \end{itemize}

    The weak equivalences in the squares category of graphs of (1) are simply the identities; hence this example is proto-Waldhausen.
    For (2) and  (3), the weak equivalences are given by graph isomorphisms. Example (2) is proto-Waldhausen, where the span completion $s$ takes \[
\begin{tikzcd}
    H \ar[r, >->] \ar[d, ->>] & G\\
    H'
\end{tikzcd} \leadsto \begin{tikzcd}
    H \ar[r, >->] \ar[d, ->>] & G \ar[d, ->>] \\
    H' \ar[r, >->] & G'
\end{tikzcd}
\] where $G'$ is the full subgraph of $G$ on $V(H')\cup V(G)\setminus V(H)$; the components of the natural transformation $w$ are graph inclusions which are identity on objects.
Example (3) is \textit{not} proto-Waldhausen since not every span can be completed. For instance, for the specific example illustrated previously, there is no way to complete the span to a square which is a pushout in the category of graphs.







\subsection{Proto-exact categories}\label{ex:protoexact}

Proto-exact categories were introduced in \cite[\S 2.4]{dyckerhoff/kapranov:19} as a generalization of exact categories. Just as Waldhausen categories generalize exact categories, every proto-exact category is a proto-Waldhausen category in a natural way.

\begin{definition}
    A \textit{proto-exact category}  is a pointed category $\cat C$ with two distinguished classes of morphisms $\cat M$ and $\cat E$ called \textit{admissible monomorphisms} and \textit{admissible epimorphisms}, respectively, satisfying the following:\begin{itemize}
        \item Augmented: the zero object $*\in \cat C$ is initial in $\cat M$ and terminal in $\cat E$. Any morphism $*\to A$ is in $\cat M$, and any morphism $A\to *$ is in $\cat E$.
        \item Closure: $\cat M$ and $\cat E$ are closed under composition and contain all isomorphisms.
        \item Bicartesian squares: a commutative square of the form\[
        \begin{tikzcd}
            \scriptstyle\bullet \ar[r, >->] \ar[d, ->>] & \scriptstyle\bullet \ar[d, ->>] \\
            \scriptstyle\bullet \ar[r, >->] & \scriptstyle\bullet
        \end{tikzcd}
        \] is cartesian if and only if it is cocartesian. We distinguish bicartesian squares with a $\square$ in the center.
        \item Stable: Every span and cospan determine a bicartesian square,\[
        \begin{tikzcd}
            \scriptstyle\bullet \ar[r, >->] \ar[d, ->>] & \scriptstyle\bullet  \\
            \scriptstyle\bullet  &
        \end{tikzcd}\mapsto\begin{tikzcd}
            \scriptstyle\bullet \ar[r, >->] \ar[d, ->>] \ar[rd, phantom, "\square"] & \scriptstyle\bullet \ar[d, ->>] \\
            \scriptstyle\bullet \ar[r, >->] & \scriptstyle\bullet
        \end{tikzcd} ~\text{ and }~ \begin{tikzcd}
             & \scriptstyle\bullet \ar[d, ->>] \\
            \scriptstyle\bullet \ar[r, >->] & \scriptstyle\bullet
        \end{tikzcd}\mapsto\begin{tikzcd}
            \scriptstyle\bullet \ar[r, >->] \ar[d, ->>] \ar[rd, phantom, "\square"] & \scriptstyle\bullet \ar[d, ->>] \\
            \scriptstyle\bullet \ar[r, >->] & \scriptstyle\bullet
        \end{tikzcd}
        \]
    \end{itemize}
\end{definition}

Examples of proto-exact categories include the category of finite pointed sets and the category of finitely generated projective $R$-modules.

A proto-exact category determines a category with squares in a natural way, whose horizontal and vertical weak equivalences are the isomorphisms in $\cat M$ and $\cat E$, respectively. Moreover, using the universal properties of these bicartesian squares, one can check that these squares categories are proto-Waldhausen. Note that the fact that squares are bicartesian also implies that both horizontal and vertical weak equivalences are isomorphisms. Then by \cref{thm:square and S comparison}, we have $\abs{S^{\square}_\bullet\cat C}\simeq \abs{T_\bullet \cat C}$ as spaces.

\subsection{Weak Waldhausen categories}\label{ex:weak Waldhausen}

    In \cite{ogawa/shah}, Ogawa--Shah introduce the notion of a \textit{weak Waldhausen category}, which is a generalization of a Waldhausen category that also includes triangulated categories as examples, and define a Grothendieck group for such categories. Every weak Waldhausen category $\cat C$ defines a squares category, where the horizontal morphisms are the cofibrations, the vertical morphisms are all morphisms in $\cat C$, and the squares are the weak pushout squares of \cite[Remark 2.14(3)]{ogawa/shah}. Morphisms of weak Waldhausen categories, defined in \cite[Definition 2.18]{ogawa/shah}, are precisely the weakly exact functors described in \cref{rmk:weaklyexact}. In particular, the $S^{\square}_\bullet$-construction is functorial in weak Waldhausen categories and weakly exact functors, and produces a space $K(\cat C):= \Omega \abs{S^{\square}_\bullet \cat C}$ so that $K_0(\cat C)$ is precisely the Grothendieck group defined by Ogawa--Shah.
    
    Weak Waldhausen categories are not proto-Waldhausen, although they are very close. In particular, even if one could define $s$ on objects, this section is only guaranteed to be defined on those morphisms in $\Fun^v(\tinysquare, \dcat C)$ which are pointwise weak equivalences (using \cite[Definition 2.13(WW1)]{ogawa/shah}). We emphasize that this entire structure is needed for the comparison $S^{\square}_\bullet$-construction and $T_\bullet$-construction. However, it is possible that certain examples weak Waldhausen categories may admit a proto-Waldhausen structure. 

\section{Connection with 2-Segal objects}\label{sec:segal connection}

A $2$-Segal set is a simplicial set $X$ which behaves like a ``multi-valued category'' in the sense that it has objects $X_0$ and morphisms $X_1$, but no well-defined notion of composition; in particular, the first map in the span \[
X_1\times_{X_0} X_1 \leftarrow X_2 \to X_1
\] need not be a bijection. However, there is a well-defined ``composition'' of $2$-simplices (whence the ``$2$'' in $2$-Segal) in the sense that the first map in the span\[
X_2\times_{X_1} X_2 \leftarrow X_3 \to X_2
\] is invertible. Moreover, this composition is associative, as witnessed by the fact that the \textit{$2$-Segal maps}\[
X_n \xrightarrow{\cong} X_2\times_{X_1}\dots \times_{X_1} X_2
\] which land in the $(n-1)$-fold iterated pullback are isomorphisms for $n\geq 3$. More generally, one has the notion of \textit{$2$-Segal spaces} which are simplicial spaces so that the $2$-Segal maps above are weak equivalences.

Since their introduction by Dyckerhoff--Kapronov in \cite{dyckerhoff/kapranov:19} and independently by G\'alvez-Carriollo--Kock--Tonks in \cite{GCKT:18}, $2$-Segal spaces have been connected to a variety of different areas of study, including the theory of Hall algebras \cite{Dyckerhoff:18, Penney:17, Walde, Young:18}, ennumerative combinatorics \cite{Carlier, Carlier-Kock, GCKT:18}, higher category theory \cite{feller, Stern:19}, and higher algebraic $K$-theory \cite{BOORS:2016, carawan, poguntke}. In \cite{dyckerhoff/kapranov:19} and \cite{GCKT:18}, both teams of authors observed a particularly striking connection between $2$-Segal sets and higher algebraic $K$-theory; namely, that Waldhausen's $S_\bullet$-construction outputs $2$-Segal objects when fed categorical inputs with enough structure.

In \cite{BOORS:2016}, Bergner--Osorno--Ozornova--Rovelli--Scheimbauer take this idea a step further and identify the precise categorical structure necessary for a version of the $S_\bullet$-construction to produce a $2$-Segal set. The structures they consider, called \textit{stable augmented double categories}, are double categories which (among other conditions) satisfy a ``stability'' condition: every span and cospan can be uniquely completed to a square\[
\begin{tikzcd}
    A \ar[r, >->] \ar[d, ->>] & B\\
    C &
\end{tikzcd} \leadsto \begin{tikzcd}
    A \ar[r, >->] \ar[d, ->>]\ar[rd, phantom, "\square"] & B\ar[d, ->>] \\
    C \ar[r, >->] & D
\end{tikzcd} \hspace{1cm} \text{ and } \hspace{1cm} \begin{tikzcd}
     & B \ar[d, ->>] \\
    C \ar[r, >->] & D
\end{tikzcd} \leadsto \begin{tikzcd}
    A \ar[r, >->] \ar[d, ->>] \ar[rd, phantom, "\square"] & B\ar[d, ->>] \\
    C \ar[r, >->] & D
\end{tikzcd}.
\] The $S_\bullet$-construction of such a double category consists of diagrams of the form\[
\begin{tikzcd}
    A_{00} \ar[r, >->]& A_{01}\ar[rd, phantom, "\square"]\ar[d, ->>] \ar[r, >->] & A_{02} \ar[r, >->] \ar[d, ->>] \ar[rd, phantom, "\square"] & \cdots \ar[r, >->] \ar[rd, phantom, "\square"]  & A_{0n} \ar[d, ->>]\\
    &A_{11} \ar[r, >->] & A_{12} \ar[r, >->] & \cdots \ar[r, >->]  & A_{1n} \ar[d, ->>]\\
    &&&  \ddots & \vdots \ar[d, ->>]\\
    &&&&  A_{nn}
\end{tikzcd}
\] where each $A_{ii}$ is in the \textit{augmentation} of the double category. The main result of \cite{BOORS:2016} is that their $S_\bullet$-construction gives an equivalence of categories between stable augmented double categories and $2$-Segal sets; the inverse is very explicit and uses a path space construction (see \cite[Section 5]{BOORS:2016}). Hence the $S_\bullet$-construction of a stable augmented double category is completely characterized by having a $2$-Segal structure.

Inspired by this result, one could ask when the $K$-theory of a category with squares determines a $2$-Segal object. In this section, we introduce \textit{stable squares categories} (\cref{defn:stable squares cat}), which are like the stable (pointed) double categories above but with weak equivalences. Because of these weak equivalences, stable squares categories do not always produce $2$-Segal objects. However, when the weak equivalences are invertible (along with one additional, mild condition, see \cref{defn:isostable}), the $S^{\square}_\bullet$-construction of a stable squares category is a $2$-Segal space (\cref{thm:2Segal from squares}). Examples that satisfy these conditions include finite sets (\cref{ex:FinSet}), polytopes (\cref{ex:polytopes}), and the Waldhausen category of finitely-generated projective $R$-modules.

\begin{remark}
    In \cite{BOORS:2021}, Bergner--Osorno--Ozornova--Rovelli--Scheimbauer broaden the scope of their results to include more homotopical contexts, using a sufficiently nice model category $\cat A$. Their main result is that a kind of $S_\bullet$-construction induces a Quillen equivalence between the category of (unital) $2$-Segal objects and a category of \textit{augmented stable double Segal objects} in $\cat A$. These augmented stable double Segal objects can be interpreted as the internal double nerve of a double $\cat A$-category (a category internal to categories internal to $\cat A$), and one could ask about connections to categories with squares and a $T_\bullet$-construction in this context. This would require developing a theory of internal squares categories, which we do not pursue in this paper (but is considered for spaces in \cite{HRS:22}).
\end{remark}

\subsection{2-Segal objects}
There are many equivalent ways to formulate what it means to be a $2$-Segal object, one of which is the following. We refer the reader to \cite{dyckerhoff/kapranov:19, BOORS:2021} for a more detailed discusion of $2$-Segal objects.

\begin{definition}\label{defn:2Segal}
    A \textit{2-Segal object in a category $\cat A$} is a functor $X\colon \Delta^{\op}\to \cat A$ so that for every $n\geq 3$, and any $0\leq i<j\leq n$, the diagram\[
\begin{tikzcd}
    \{0,\dots, n\} & \{i,\dots, j\}\ar[l] \\
    \{0,\dots, i,j,\dots,n\} \ar[u] & \{i,j\} \ar[l] \ar[u]
\end{tikzcd}
\] in $\Delta$ is sent to a homotopy pullback\[
    \begin{tikzcd}
        X_n \ar[r] \ar[d] & X_{\{i, \dots, j\}} \ar[d] \\
         X_{\{0,\dots, i,j, \dots, n\}}\ar[r] & X_{\{i,j\}}
    \end{tikzcd}
    \] in $\cat A$. We let $2\Seg_{\cat A}$ denote the category of $2$-Segal objects in $\cat A$ and maps between them (i.e. natural transformations), only using the subscript $\cat A$ when the ambient category is not clear from context.
\end{definition}

\begin{remark}\label{rmk:check 2Segal}
    As shown in \cite[Proposition 1.17]{BOORS:2021}, for each $n\geq 3$, it suffices to consider the diagrams for $(i,j)=(0,2)$ and $(i,j)=(n-2,n)$.
\end{remark}

In the literature, many authors impose the additional condition that their $2$-Segal objects are \textit{unital}, meaning that the squares\[
\begin{tikzcd}
    X_1 \ar[r, "d_0"] \ar[d, swap, "s_1"] & X_0 \ar[d, "s_0"] \\
    X_2 \ar[r, swap, "d_0"] & X_1
\end{tikzcd} ~\text{ and }~ \begin{tikzcd}
    X_1 \ar[r, "d_1"] \ar[d, swap, "s_0"] & X_0 \ar[d, "s_0"] \\
    X_2 \ar[r, swap, "d_2"] & X_1
\end{tikzcd}
\] are pullbacks (c.f. \cite[Lemma 1.11]{BOORS:2016}). However, \cite{felleretal} has since shown that every $2$-Segal space is unital, and so we will not make such a restriction.

\begin{definition}
    A $2$-Segal object is \textit{reduced} if $X(0)=*$. We let $2\Seg_*\subseteq 2\Seg$ denote the subcategory of reduced $2$-Segal objects.
\end{definition}

One well-studied example of $2$-Segal objects comes from Waldhausen's $S_\bullet$-construction on an exact category. As discussed in \cite[Remark 7.3.7]{dyckerhoff/kapranov:19}, it is known that $S_\bullet(\cat C)$ may not be $2$-Segal for an arbitrary Waldhausen category $\cat C$ (see also \cite{carawan}). However, in \cite{BOORS:2016}, Bergner--Osorno--Ozornova--Rovelli--Scheimbauer show that every $2$-Segal set arises as the $S_\bullet$-construction of a certain kind of double category, called \textit{stable augmented double categories}. For simplicity, we will restrict from augmented to pointed double categories (corresponding to reduced $2$-Segal sets), although one could similarly study the augmented case (c.f. \cref{rmk:augmented Sdot}).

\begin{definition}\label{defn:stable dbl cat}
    A double category $\mathbb{C}$ is \textit{stable} if every square is uniquely determined by its span and cospan, meaning there are bijections\[
    \cat H_{\dcat C}\times_{\ob \dcat C} \cat V_{\dcat C} \xleftarrow{}{\Sq}(\dcat C) \xrightarrow{} \cat V_{\dcat C} \times_{\ob \dcat C}  \cat H_{\dcat C}  \] given by the maps
    \[
    \begin{tikzcd}
            & \scriptstyle\bullet \ar[d, ->>] \\
            \scriptstyle\bullet\ar[r, >->] & \scriptstyle\bullet
    \end{tikzcd}
    \mapsfrom
    \begin{tikzcd}
         \scriptstyle\bullet \ar[r, >->] \ar[d, ->>]& \scriptstyle\bullet \ar[d, ->>]\\
        \scriptstyle\bullet \ar[r, >->]  & \scriptstyle\bullet
    \end{tikzcd} \mapsto
        \begin{tikzcd}
         \scriptstyle\bullet \ar[r, >->] \ar[d, ->>]& \scriptstyle\bullet\\
        \scriptstyle\bullet
    \end{tikzcd} .
    \]
\end{definition}

A stable double category $\dcat{C}$ is, by definition, a flat double category. Hence a pointed stable double category is a squares category, so we may consider its $S_\bullet^{\square}$-construction.

\begin{proposition}\label{rmk:WITissquares}
If $\dcat C$ is a pointed stable double category, then the simplicial set of objects $\ob S^{\square}_\bullet \dcat{C}$ agrees with the $S_\bullet$-construction of \cite{BOORS:2016}.
\end{proposition}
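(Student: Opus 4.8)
The plan is to show that, at each simplicial level, both constructions produce literally the same set of staircase diagrams, and then that the simplicial operators coincide. I would argue level by level and conclude with naturality in $[n]$.

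First I would unwind $\ob S^{\square}_n\dcat C$. By \cref{defn:S-dot of cat w squares}, an object is a double functor $F\colon S^{\square}_n\to\dcat C$ with $F(A_{ii})=O$. Since $S^{\square}_n$ is the flat double category \emph{generated by} the staircase data, and since $\dcat C$ is flat, such a functor is freely specified by the images of the generating objects and horizontal/vertical morphisms, subject \emph{only} to the requirement that each generating unit square of the staircase be sent to a square of $\dcat C$. Flatness of both $S^{\square}_n$ and $\dcat C$ guarantees there are no further coherence conditions: composites of squares are again squares, and in a flat double category these are determined by their boundaries, so $F$ is forced on all non-generating cells. Thus $\ob S^{\square}_n\dcat C$ is exactly the set of staircase diagrams in $\dcat C$ with $C_{ii}=O$ whose designated squares lie in $\dcat C$.

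Next I would observe that a pointed stable double category is in particular flat: by \cref{defn:stable dbl cat} a square is determined by its span, hence a fortiori by its boundary. Together with pointedness this makes $\dcat C$ a squares category (\cref{defn:squarescat}), so that both $S^{\square}_\bullet\dcat C$ and the $S_\bullet$-construction of \cite{BOORS:2016} are defined. In the pointed case the augmentation is $\{O\}$, and the $S_\bullet$-construction of \cite{BOORS:2016} consists of the very same staircase diagrams, with diagonal entries $O$ and all designated squares being squares of $\dcat C$; comparing the two descriptions yields a level-wise bijection on the nose. If instead one takes the reduced description of \cite{BOORS:2016}, recording only the top row $O=A_{00}\cof A_{01}\cof\cdots\cof A_{0n}$, I would use stability together with $O$ being terminal in the vertical direction to reconstruct the staircase uniquely by iterated span-completion, and conversely note that any staircase whose squares lie in $\dcat C$ is forced to be this completion; this again gives the same bijection. (Should the staircase orientation of \cite{BOORS:2016} be the vertical opposite of ours, the identification is unaffected by \cref{rmk:O choices}.)

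Finally I would check the bijection is simplicial. Both simplicial sets arise from the same cosimplicial double category $[n]\mapsto S^{\square}_n$: the face maps delete the $i$th row and column and compose the straddling morphisms, and the degeneracies insert identity morphisms and identity squares, identically in both settings. I expect the main obstacle to be bookkeeping rather than conceptual content, namely confirming the flatness reduction (that a functor out of the generated $S^{\square}_n$ hides no extra data beyond the unit squares being squares) and matching the face-map conventions of \cite{BOORS:2016}—in particular the ``compose when appropriate'' clause and the treatment of the entries adjacent to the diagonal. Stability itself is used only to license the \cite{BOORS:2016} construction and, under the reduced description, to perform the span-completions; the object-level comparison is otherwise a direct unwinding of definitions.
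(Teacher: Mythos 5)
Your proposal is correct and follows essentially the same route as the paper: observe that stability forces flatness (squares determined by their spans, hence by their boundaries), so a pointed stable double category is a squares category, and then unwind both definitions to see that they produce the same staircase diagrams with the same simplicial operators. The paper compresses the unwinding into ``it is straightforward to check,'' whereas you carry it out explicitly; the content is the same.
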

\begin{proof}
    It is straightforward to check that the definition of $S_\bullet$ given in \cite[Section 4.7]{BOORS:2016} is the simplicial set of objects of the simplicial category described in \cref{defn:S-dot of cat w squares}. 
\end{proof}

We can use this observation and the results of \cite{BOORS:2016} to immediately deduce the following.

\begin{corollary}
    Every reduced $2$-Segal set is equivalent to $\ob S^{\square}_\bullet \dcat C$ for some squares category $\dcat C$.
\end{corollary}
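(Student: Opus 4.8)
The plan is to obtain the required squares category directly from the equivalence of categories in \cite{BOORS:2016}, and then translate their $S_\bullet$-construction into ours using \cref{rmk:WITissquares}. First I would take an arbitrary reduced $2$-Segal set $X$. By the main theorem of \cite{BOORS:2016}, the $S_\bullet$-construction restricts to an equivalence between stable \emph{pointed} double categories and reduced $2$-Segal sets (this is the pointed analogue of their augmented statement, matching reduced to pointed as recalled in the discussion above). Applying the inverse functor---the path space construction of \cite[Section 5]{BOORS:2016}---to $X$ produces a stable pointed double category $\dcat C$ together with an isomorphism $S_\bullet\dcat C \cong X$ of simplicial sets.

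Next I would observe that $\dcat C$ is a squares category: by \cref{defn:stable dbl cat} a stable double category is flat, and it is pointed by construction, so it satisfies \cref{defn:squarescat}. This is exactly the observation made in the proof of \cref{rmk:WITissquares}. It then remains to identify the two $S_\bullet$-constructions. By \cref{rmk:WITissquares}, the simplicial set of objects $\ob S^{\square}_\bullet\dcat C$ agrees with the $S_\bullet$-construction of \cite{BOORS:2016} applied to $\dcat C$. Composing this identification with the isomorphism $S_\bullet\dcat C\cong X$ yields $\ob S^{\square}_\bullet\dcat C\cong X$, as desired.

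There is no substantive obstacle here, since the argument is essentially a bookkeeping exercise combining two results that are already in hand. The only point that requires care is confirming that the equivalence of \cite{BOORS:2016} does restrict cleanly to the pointed/reduced setting---that is, that the path space construction sends a reduced $2$-Segal set to a \emph{pointed} (rather than merely augmented) stable double category, with the distinguished object arising from the basepoint $X(0)=*$. This compatibility is implicit in the discussion preceding \cref{defn:stable dbl cat}, and once it is granted the rest of the proof is immediate.
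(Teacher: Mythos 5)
Your proposal is correct and matches the paper's (implicit) argument exactly: the paper deduces the corollary ``immediately'' from \cref{rmk:WITissquares} together with the equivalence of \cite{BOORS:2016} between stable pointed double categories and reduced $2$-Segal sets, which is precisely the route you take. Your closing remark about checking that the path-space inverse lands in \emph{pointed} (not merely augmented) stable double categories is the one point the paper also glosses over, and your handling of it is appropriate.
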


\begin{remark}\label{rmk:wes in st ptd dbl}
    Note that stability of $\dcat C$ implies that the maps in $w^h\dcat C$ and $w^v\dcat C$ must be \textit{equalities} (not even just isomorphisms). In particular, the same argument from \cref{prop:S-bullet with isos} implies that $\abs{S^{\square}_\bullet \dcat{C}}\simeq \abs{\ob S^{\square}_\bullet \dcat{C}}$; hence the $K^\square$-theory space as a squares category and the space obtained from the $S_\bullet$-construction of \cite{BOORS:2016} agree by \cref{thm:square and S comparison}.
\end{remark}

\subsection{Stable squares categories} We now introduce a notion of ``stability'' for squares categories, that will allow us to connect these structures to 2-Segal objects via the $S^{\square}_\bullet$-construction of \cref{section:Sdot}. The notion of a stable squares category is inspired by that of proto-exact categories; see \cref{ex:protoexact}. 
Essentially, a stable squares category satisfies both the proto-Waldhausen conditions and their duals.

\begin{definition}\label{defn:stable squares cat}
    A squares category $\dcat C$ is \textit{stable} if
    \begin{itemize}
      \item[(i)]
        The functor $i^*\colon \Fun^v\left(\tinysquare, \dcat C\right)\to \Fun^v\left(\tinyspan, \dcat C\right)$ induced by the inclusion $i\colon\tinyspan\hookrightarrow\tinysquare$ admits a section functor $s$.
        \item[(ii)] There is a natural transformation $w\colon si^*\Rightarrow \id$ whose component on a square is\[
\begin{tikzcd}[row sep=tiny, column sep=tiny]
  & A\ar[rr,>->]\ar[dd,->>] && B\ar[dd,->>]\\
  A\ar[ur,equal, gray]\ar[rr,>->,crossing over]\ar[dd,->>] && B\ar[ur,equal, gray]&\\
  & C\ar[rr,>->] && D\\
   C\ar[ur,equal, gray]\ar[rr,>->] && D_0\ar[ur,->>,gray,"w_D"']\ar[from=uu,->>, crossing over] &\\
\end{tikzcd} \hspace{1cm}
\begin{tikzcd}
  & \id\\
  si^*\ar[ur,->>,gray,"w"]\\
\end{tikzcd}.
\]
\item[(iii)] The functor  $j^*\colon \Fun^v\left(\tinysquare, \dcat C\right)\to \Fun^v \left(\tinycospan, \dcat C\right)$ induced by the inclusion $j\colon\tinycospan\hookrightarrow\tinysquare$ admits a section functor $t$.
        \item[(iv)] There is a natural transformation $u\colon tj^*\Rightarrow \id$ whose component on a square is\[
\begin{tikzcd}[row sep=tiny, column sep=tiny]
  & A\ar[rr,>->]\ar[dd,->>] && B\ar[dd,->>]\\
  A_0\ar[ur,->>,gray,"u_A"]\ar[rr,>->,crossing over]\ar[dd,->>] && B\ar[ur,equal, gray] &\\
  & C\ar[rr,>->] && D\\
   C\ar[ur,equal, gray]\ar[rr,>->] && D\ar[ur,equal,gray]\ar[from=uu,->>, crossing over] &\\
\end{tikzcd} \hspace{1cm}
\begin{tikzcd}
  & \id\\
  tj^*\ar[ur,->>,gray,"u"]\\
\end{tikzcd}.
\] and such that the map $u_A$ is a weak equivalence for all $A$.
    \end{itemize}
\end{definition}

\begin{remark}
    The first two conditions in the above definition just say that $\dcat C$ is proto-Waldhausen. Conditions (iii) and (iv) are analogous to (i) and (ii) but for cospans instead of spans, the only notable difference being that we need the separate requirement that the natural transformation is point-wise a weak equivalence, as this is no longer implied by the rest of the data. Dually to the proto-Waldhausen axioms, conditions (iii) and (iv) invoke the idea that distinguished squares are pullbacks up to weak equivalence. In \cref{rmk:stablealternatives}, we explain our choice of ``dualization'' that only includes vertical natural transformations, and mention other plausible notions of stability. 
\end{remark}

Our first example of stable squares category shows how they provide a generalization of the pointed stable double categories of \cite{BOORS:2016}.

\begin{proposition}\label{prop:stable dblcat is stable sqcat}
    Every pointed stable double category is a stable squares category.
\end{proposition}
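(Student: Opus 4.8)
The plan is to prove something slightly stronger than conditions (i)--(iv) literally demand: that stability promotes the object-level bijections of \cref{defn:stable dbl cat} to \emph{isomorphisms of categories}
\[
i^*\colon \Fun^v\!\left(\tinysquare, \dcat C\right)\xrightarrow{\ \cong\ } \Fun^v\!\left(\tinyspan, \dcat C\right)
\qquad\text{and}\qquad
j^*\colon \Fun^v\!\left(\tinysquare, \dcat C\right)\xrightarrow{\ \cong\ } \Fun^v\!\left(\tinycospan, \dcat C\right).
\]
Granting this, I would take $s=(i^*)^{-1}$ and $t=(j^*)^{-1}$, so that $si^*=\id$ and $tj^*=\id$ strictly; the natural transformations $w$ and $u$ of conditions (ii) and (iv) are then the identity transformations, and in particular each component $w_D$ and $u_A$ is an identity vertical morphism. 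Since $\id_A\colon A\quot A$ participates in the unit square on the (unique) horizontal morphism $O\cof A$ provided by pointedness, it is a vertical weak equivalence by \cref{defn:weakequiv}, so the extra hypothesis on $u_A$ in condition (iv) is automatic. I would also record at the outset that a stable double category is flat (a square determined by its span is \emph{a fortiori} determined by its full boundary) and, being pointed by assumption, is therefore a squares category in the sense of \cref{defn:squarescat}.

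The heart of the argument is upgrading the object bijection to a bijection on morphisms, i.e.\ showing $i^*$ is fully faithful (the cospan case being dual). Fix squares $X,Y$ and a morphism $\eta\colon i^*X\Rightarrow i^*Y$ of their spans, consisting of vertical maps $\eta_a,\eta_b,\eta_c$ on the three span-corners together with the top-face square and the left-face commuting square; I must produce a \emph{unique} $\eta_d\colon X(d)\quot Y(d)$ completing $\eta$ to a morphism of squares. For uniqueness I would note that the required bottom face is itself a square whose span is $\bigl(X(c)\cof X(d),\ \eta_c\bigr)$, so by stability that square -- hence its right edge $\eta_d$ -- is forced. For existence I would complete the same span to the unique square $B$, with right edge $\beta\colon X(d)\quot\bar d$, and then vertically compose $X$ with $B$; the resulting square has top edge $X(a)\cof X(b)$ and left edge $\eta_c\circ X(p)=p'\circ\eta_a$ (using left-face commutativity), which is exactly the span of the vertical composite of the top face with $Y$. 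By stability these two composites coincide, and reading off their shared bottom edge, corner, and right edge identifies $\bar d$ with $Y(d)$, the bottom of $B$ with the bottom of $Y$, and shows $\beta$ satisfies the right-face relation. Hence $\eta_d:=\beta$ works.

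The main obstacle is precisely this morphism-level step: stability is stated only as a bijection on the \emph{set} of squares, and the content of the proposition is that this bijection is natural enough to extend vertical natural transformations of spans (resp.\ cospans) to vertical natural transformations of squares. The composite-of-squares identification above is what converts the bare set-level uniqueness into genuine functoriality; once $i^*$ and $j^*$ are known to be isomorphisms of categories, the remaining verifications (that $s,t$ are sections and that $w,u$ are identities) are immediate. I would close by observing that conditions (iii)--(iv) follow from (i)--(ii) by the evident horizontal-to-vertical dualization of this argument, the only additional input being the already-noted fact that identity vertical morphisms are weak equivalences.
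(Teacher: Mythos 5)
Your proof is correct and takes essentially the same approach as the paper: the heart of your existence argument --- completing the span $(X(c)\cof X(d),\,\eta_c)$ to a square $B$ and comparing the two vertical composites (top face stacked on $Y$, versus $X$ stacked on $B$) over their common span to identify $\bar d$ with $Y(d)$ --- is exactly the paper's construction of $s$ on morphisms, and both treatments reduce the cospan case to the analogous argument and take $w$, $u$ to be identities. The only difference is packaging: you upgrade the section to a strict inverse of $i^*$ (adding the easy uniqueness half of full faithfulness), while the paper merely constructs a section and notes its functoriality follows from uniqueness of span completions; this changes nothing of substance.
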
\begin{proof}
As explained in \cref{rmk:WITissquares}, a pointed stable double category is a squares category in which the weak equivalences $w^h\dcat C$ and $w^v\dcat C$ are just the identity morphisms. For the stability conditions, recall that every span and cospan completes uniquely to a square; this defines the functors $s$ and $t$ on objects. To define the functor $s$ on morphisms, suppose we have a diagram as below left
\[\begin{tikzcd}[row sep=tiny, column sep=tiny]
  & A'\ar[rr,>->]\ar[dd,->>] && B'\ar[dd,->>]\\
  A\ar[ur,->>]\ar[rr,>->,crossing over]\ar[dd,->>] && B\ar[ur,->>] &\\
  & C'\ar[rr,>->] && D'\\
   C\ar[ur,->>]\ar[rr,>->] && D\ar[from=uu,->>, crossing over] &\\
\end{tikzcd} \hspace{2cm}
\begin{tikzcd}
    C\dar[->>]\rar[>->]\ar[dr,phantom, "\square"] & D\dar[->>]\\
    C'\rar[>->] & E
\end{tikzcd}
\] where all squares are  either commutative or distinguished as appropriate. Completing the span 
\[C' \twoheadleftarrow C\cof D\] yields a square as depicted above right. We now have two squares given by the vertical composites
\[\begin{tikzcd}
A\dar[->>]\rar[>->]\ar[dr,phantom, "\square"] & B\dar[->>]\\
    A'\dar[->>]\rar[>->]\ar[dr,phantom, "\square"] & B'\dar[->>]\\
    C'\rar[>->] & D'
\end{tikzcd} \hspace{2cm}
\begin{tikzcd}
A\dar[->>]\rar[>->]\ar[dr,phantom, "\square"] & B\dar[->>]\\
    C\dar[->>]\rar[>->]\ar[dr,phantom, "\square"] & D\dar[->>]\\
    C'\rar[>->] & E
\end{tikzcd}
\] both of which complete the same span
\[C' \twoheadleftarrow A\cof B.\] By assumption, these must agree; then our required map $D\twoheadrightarrow D'$ is the map $D\twoheadrightarrow E$ and the two new squares created in the cube are  either commutative or distinguished. 

Since $s$ was constructed using the unique span completions, it must be functorial. One defines the functor $t$ analogously, using the unique cospan completions. Finally, the required natural transformations $w$ and $u$ are simply the identity.
\end{proof}

\begin{example}
By \cite{BOORS:2016}, every pointed $2$-Segal set $X$ determines a pointed stable double category, and hence a stable squares category. In the case that $X = S_\bullet \dcat C$ is the $S_\bullet$-construction of \cite{BOORS:2016} on a pointed stable double category $\dcat C$, we detail the structure of the corresponding category of squares $\dcat X$. The objects of $\dcat X$ are $*\cof c\in S_1\dcat C$; horizontal morphisms $(*\to c_0) \cof (* \to c_1)$ and  vertical morphisms $(*\to d_0) \quot (*\to d_1)$ are elements of $S_2\dcat C$ of the form \[
    \begin{tikzcd}
        * \ar[r,>->] & c_0 \ar[r, >->] \ar[d, ->>] \ar[rd, phantom, "\square"] & c_1\ar[d, ->>]\\
        & * \ar[r, >->] & c_{01} \ar[d, ->>] \\
        && *
    \end{tikzcd} ~\text{ and }~
    \begin{tikzcd}
        * \ar[r,>->] & c_0 \ar[r, >->] \ar[d, ->>]  \ar[rd, phantom, "\square"]& d_0\ar[d, ->>]\\
        & * \ar[r, >->] & d_1 \ar[d, ->>] \\
        && *
    \end{tikzcd},
    \] respectively. A distinguished square in $\dcat X$ is an element of $S_3\dcat C$,\[
    \begin{tikzcd}
(*\to c_0) \ar[r, >->] \ar[d, ->>]  \ar[dr, phantom, "\square"]& (*\to c_1) \ar[d, ->>]\\
(*\to d_0) \ar[r, >->]& (*\to d_1)
\end{tikzcd} ~=~ \begin{tikzcd}
    * \ar[r, >->] & c \ar[r, >->] \ar[d, ->>] \ar[rd, phantom, "\square"] & c_0 \ar[r, >->] \ar[d, ->>]  \ar[rd, phantom, "\square"]& c_1 \ar[d, ->>]\\
      & * \ar[r, >->] & d_0\ar[r, >->] \ar[d, ->>]  \ar[rd, phantom, "\square"]& d_1 \ar[d, ->>]\\
      &   & * \ar[r, >->]& d \ar[d, ->>]\\
    &&& *
\end{tikzcd}
    \] and the distinguished object is the identity on $*$.
\end{example}

\begin{example} 
The squares category given by any proto-exact category (see \cref{ex:protoexact}) is stable, as one can readily check using the universal properties of bicartesian squares. 
\end{example}

\begin{example}\label{ex:FinSet stable}
    The squares category of finite sets from \cref{ex:FinSet} is stable. Because of the symmetry of the squares, the cospan completion $t$ may be defined just as the span completion $s$ was.
\end{example}

\begin{example}\label{ex:polytope stable}
    The polytope example from \cref{ex:polytopes} is stable. Similarly to finite sets, stability follows from the definition of squares in $\dcat{P}^n_G$ and their symmetry.
\end{example}

\begin{remark}
    In \cite[Section 2.3]{FL:1991}, Fiedorowicz--Loday consider double categories that satisfy a condition very similar to \cref{defn:stable squares cat}(iii) and show that every crossed simplicial group determines such a double category. In \cite[Proposition 2.6]{FL:1991}, they further show that the classifying space of these double categories can be modeled by a generalized $Q$-construction. It would be interesting to consider when the $K^{\square}$-theory of a squares category can be modeled by a similar $Q$-construction, but this is beyond the scope of this paper.
\end{remark}

\subsection{2-Segal objects from stable squares categories} 
The $S^{\square}_\bullet$-construction from \cref{defn:S-dot of cat w squares} is precisely the one defined by Dyckerhoff--Kapranov for proto-exact categories. In particular, $S^{\square}_\bullet (\cat C)$ is a simplicial groupoid when $\cat C$ is proto-exact, and in \cite[Proposition 2.4.8]{dyckerhoff/kapranov:19} Dyckerhoff--Kapranov use this to show that $[n]\mapsto BS_n(\cat C)$ is a $2$-Segal space. We can generalize their methods to stable squares categories.

\begin{definition}\label{defn:isostable}
    A squares category $\dcat C$ is \textit{isostable} if it is a stable squares category whose weak equivalences are invertible, and moreover given
      \[\begin{tikzcd}
     A\ar[d, ->>, "g"', "\cong"] \ar[r, >->,"f"]& B \ar[d, ->>,"h","\cong"'] \\
    C \ar[r, >->,"k"] & D
\end{tikzcd}
\hspace{2cm}
\begin{tikzcd}
     C\ar[d, ->>, "g^{-1}"', "\cong"] \ar[r, >->,"k"]& D \ar[d, ->>,"h^{-1}","\cong"'] \\
    A \ar[r, >->,"f"] & B
\end{tikzcd},
\] the left diagram is a square in $\dcat{C}$ if and only if the right one is. 
\end{definition}

The purpose of \cref{defn:isostable} is to ensure the following lemma holds. 

\begin{lemma}\label{lem:isostable gives gpd}
    If $\dcat C$ is isostable, then $S^{\square}_\bullet \dcat C$ is a simplicial groupoid.
\end{lemma}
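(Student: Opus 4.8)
The plan is to prove the stronger statement that each category $S^{\square}_n\dcat C$ is a groupoid; since the simplicial structure maps of $S^{\square}_\bullet\dcat C$ are functors, this is exactly what it means for $S^{\square}_\bullet\dcat C$ to be a simplicial groupoid. So fix $n$ and let $\tau\colon F\Rightarrow G$ be a morphism in $S^{\square}_n\dcat C$, i.e.\ a vertical natural transformation with components $\tau_{ij}\colon F(A_{ij})\twoheadrightarrow G(A_{ij})$. By \cref{rmk:weakequivsforfree} every component $\tau_{ij}$ lies in $w^v\dcat C$, and because $\dcat C$ is isostable its weak equivalences are invertible in $\cat V_{\dcat C}$. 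I would therefore produce a candidate inverse $\sigma\colon G\Rightarrow F$ by inverting componentwise, $\sigma_{ij}=\tau_{ij}^{-1}$, and the substance of the argument is to check that $\sigma$ is again a vertical natural transformation.

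There are two conditions to verify, corresponding to the two diagrams of \cref{def:naturaltr}. First, for each horizontal morphism $f\colon A_{ij}\cof A_{i,j+1}$ of $S^{\square}_n$ I need the boundary
\[
\begin{tikzcd}
G(A_{ij}) \ar[r, >->, "Gf"] \ar[d, ->>, "\tau_{ij}^{-1}"'] \ar[rd, phantom, "\square"] & G(A_{i,j+1}) \ar[d, ->>, "\tau_{i,j+1}^{-1}"] \\
F(A_{ij}) \ar[r, >->, "Ff"'] & F(A_{i,j+1})
\end{tikzcd}
\]
to determine a square in $\dcat C$. Since $\tau$ is a vertical natural transformation, the corresponding boundary with the legs $\tau_{ij},\tau_{i,j+1}$ oriented downward is already a square in $\dcat C$, and both of these vertical legs are invertible; the additional flip condition built into \cref{defn:isostable} says precisely that such a square remains a square after inverting its (invertible) vertical legs, which yields exactly the displayed square. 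Second, for each vertical morphism $u\colon A_{ij}\quot A_{i+1,j}$ of $S^{\square}_n$ I need the square with legs $\tau_{ij}^{-1}$ and $\tau_{i+1,j}^{-1}$ to commute in $\cat V_{\dcat C}$; but this is automatic, since the corresponding square for $\tau$ is a commuting square in the ordinary category $\cat V_{\dcat C}$ with all four vertical legs isomorphisms, so inverting the two $\tau$-legs produces a commuting square in the standard way.

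Having verified that $\sigma$ is a vertical natural transformation, it follows immediately from $\sigma_{ij}=\tau_{ij}^{-1}$ that $\sigma$ is a two-sided inverse to $\tau$ in $S^{\square}_n\dcat C$. Hence every morphism of $S^{\square}_n\dcat C$ is invertible, so each $S^{\square}_n\dcat C$ is a groupoid and $S^{\square}_\bullet\dcat C$ is a simplicial groupoid. The only genuinely nontrivial input is the first condition: without the flip condition of \cref{defn:isostable}, the inverted horizontal-naturality boundaries need not be squares of the double category, and removing exactly this obstruction is the purpose of that clause in the definition of isostable. The vertical-naturality condition and the two-sided inverse check are formal consequences of working in the ordinary category $\cat V_{\dcat C}$, and notably the full span/cospan completion structure of stability is not needed here---only invertibility of the weak equivalences together with the flip condition.
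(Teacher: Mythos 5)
Your proof is correct and takes essentially the same approach as the paper: the paper's (much terser) proof likewise inverts the vertical natural transformation pointwise, using the fact that components lie in $w^v\dcat C$ (\cref{rmk:weakequivsforfree}) together with the flip condition of \cref{defn:isostable} to see that the inverted horizontal-naturality boundaries are still squares of $\dcat C$ --- exactly the subtlety the paper spells out in \cref{rmk:isostable}. One cosmetic slip: in your vertical-naturality check the legs $Fu$ and $Gu$ need not be isomorphisms, but your conjugation argument only uses invertibility of the two $\tau$-legs, so nothing breaks.
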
\begin{proof}
If $w^v\dcat C$ consists of invertible morphisms, then every morphism in $S^{\square}_n\dcat C$ is pointwise invertible. Under the conditions of \cref{defn:isostable}, such a natural transformation admits an inverse natural transformation, whose components are the pointwise inverses.
\end{proof}

\begin{remark}\label{rmk:isostable}
It is not enough to assume that the vertical weak equivalences are all invertible to conclude that $S^{\square}_n\dcat C$ is a groupoid --- one further needs to ensure that squares are appropriately invertible. The subtlety is that, unlike in a 1-categorical setting, a vertical natural transformation of double functors which is pointwise an isomorphism may not admit an inverse natural transformation.
Indeed, suppose we have a vertical natural transformation $\tau\colon F\Rightarrow G$ between double functors $F,G\colon\dcat C\to\dcat D$ such that each component $\tau_C$ is a (vertical) isomorphism. In order for the maps $\tau_C^{-1}$ to assemble into a vertical natural transformation, we must now verify that for each horizontal map $f\colon C\cof C'$ in $\dcat C$ we have a square in $\dcat D$
    \[\begin{tikzcd}
     GC\ar[d, ->>, "\tau_C^{-1}"', "\cong"] \ar[r, >->,"Gf"]& GC' \ar[d, ->>,"\tau_{C'}^{-1}","\cong"'] \\
    FC \ar[r, >->,"Ff"] & FC'
\end{tikzcd}
\] which is not necessarily guaranteed unless we impose the conditions of \cref{defn:isostable}, which are relatively mild in practice.
\end{remark}

\begin{remark}\label{rmk:easywaytogetisostable}
    Suppose $\dcat C$ is a stable squares category whose weak equivalences are isomorphisms. A sufficient condition for $\dcat C$ to be isostable is to be able to choose span and cospan completion functors $s,t$ that behave a certain way when one of the morphisms in the (co)span is an equality:\[
    s\colon \begin{tikzcd}
        A \ar[r, >->, "f"] \ar[d, equal] & B\\
        A 
    \end{tikzcd} \longmapsto \begin{tikzcd}
        A \ar[r, >->, "f"] \ar[d, equal] \ar[rd, phantom, "\square"] & B\ar[d, equal] \\
        A \ar[r, >->, swap, "f"] & B
    \end{tikzcd}
\text{ and }
    s\colon \begin{tikzcd}
        A \ar[r, equal] \ar[d, ->>, swap, "g"] & A\\
        C 
    \end{tikzcd} \longmapsto \begin{tikzcd}
        A \ar[r, equal] \ar[d, ->>, swap, "g"] \ar[rd, phantom, "\square"] & A\ar[d, ->>, "g"] \\
        C \ar[r, equal] & C
    \end{tikzcd}
    \] and similarly for $t$. One can then deduce the final condition of isostability by first completing the span $A \xleftarrow{g^{-1}} C \xrightarrow{k} D$ and considering various compositions of squares.
\end{remark}

When $\dcat C$ is a proto-Waldhausen category whose weak equivalences are isomorphisms, we can show that the elements of $S_n \dcat C$ are completely determined by their top row. If $\dcat C$ is moreover stable, they are also determined by their rightmost column. We now prove these claims, and use them to obtain 2-Segal objects from stable squares categories.

\begin{definition}
    Let $\dcat C$ be a squares category. For each $n\geq 0$, we define $\cat H_n \dcat C$ as the category whose objects are length $n$ sequences of horizontal morphisms \[
O\cof A_1 \cof \dots \cof A_n 
\]
and whose morphisms are vertical natural transformations that are pointwise valued in $w^v\dcat C$. 
\end{definition}

\begin{definition} Similarly, define $\cat V_n\dcat C$ to be the category whose objects are length $n$ sequences of vertical morphisms\[
A_1 \quot \dots \quot A_n \quot O
\] and whose morphisms are vertical natural transformations that are pointwise valued in $w^v\dcat C$. 
\end{definition}

\begin{remark}
    One might hope that the categories $\cat H_n \dcat C$ assemble into a simplicial category, with faces and degeneracies defined in a similar manner as those of $S^{\square}_\bullet \dcat C$, using stability to define the face map $d_0$. However, this is not the case, as the simplicial identities do not hold. The analogous fact was remarked by Waldhausen \cite{waldhausen:1983} in the context of Waldhausen categories, and is the reason why the $S_\bullet$ construction must record the choices of quotients as part of the data. The same issue is present in $\cat V_n\dcat C$. We thank the anonymous referee for pointing out this subtlety.
\end{remark}

\begin{lemma}\label{lem:forget cokernels}
    Let $\dcat C$ be an isostable squares category.
   Then, for each $n\geq 0$, the forgetful functor $U_n\colon S^{\square}_n \dcat C\to \cat H_n \dcat C$ that takes an object to its top row is an equivalence of categories.
\end{lemma}
\begin{proof}
   We prove that each functor $U_n$ is an equivalence of categories by constructing an inverse equivalence. To do this, we will use the constructions (and notation) from \cref{prop:TplustoT} to define a section functor $F_n\colon\cat H_n\dcat C\to S^{\square}_n\dcat C$. Just as in \cref{prop:TplustoT}, the functor $F_n$ takes a sequence in $\cat H_n\dcat C$ to the staircase constructed by sequentially taking the span completions, which exist as $\dcat C$ is proto-Waldhausen. One can readily check that $F_n$ thus constructed takes maps in $\cat H_n\dcat C$ to maps in $S^{\square}_n\dcat C$; see for instance \cref{rmk:weakequivsforfree}. Clearly $U_n F_n=\id$, and we can construct a natural transformation $\tau\colon F_n U_n\Rightarrow\id$ exactly as in \cref{prop:TplustoT}; it suffices to show that $\tau$ is an isomorphism. 

   Now recall that each component $\tau_A\colon F_n U_n (A)\to A$ is a vertical natural transformation whose components are constructed inductively. For an arbitrary step, these are given by the composite
\[\begin{tikzcd}[bo column sep, bo row sep]
            &          & A_{i,j}\ar[rrr,>->]\ar[ddd,->>]   &     &       & A_{i,j+1}\ar[ddd,->>]\\
            &           &                               &            & \\
            & A_{i,j}\ar[rrr,>->,crossing over]\ar[ddd,->>] \ar[uur,equal] &                &                & A_{i,j+1}\ar[uur,equal] & \\
            &           & A_{i+1,j}\ar[rrr,>->]\ar[ddl,equal] &      &     & A_{i+1,j+1} \\
D_{i,j}\ar[rrr,>->,crossing over]\ar[ddd,->>] \ar[uur,->>]    &      &     & D_{i,j+1}\ar[uur,->>,crossing over] &           & \\
            & A_{i+1,j}\ar[rrr,>->] &    &       & X   \ar[uur,->>,gray] \ar[from=uuu,->>, crossing over]     & \\
             &           &            &                   &            & \\
D_{i+1,j}\ar[rrr,>->] \ar[uur,->>]&     &      &D_{i+1,j+1}\ar[uur,->>, gray]\ar[from=uuu,->>, crossing over]& & \\
\end{tikzcd}\]
Here, $D_{ij}=F_nU_n(A)_{ij}$, $X$ is the span completion of the middle face, and the map $X\twoheadrightarrow A_{i+1,j+1}$ is the component $w_{A_{i+1,j+1}}$ of the natural transformation given by condition (ii) of \cref{defn:protoWald cat}; hence, this map is a vertical isomorphism as all vertical weak equivalences in $\dcat C$ are isomorphisms by assumption. On the other hand, the map $D_{i+1,j+1}\twoheadrightarrow X$ is induced by the section $s$ given by condition (i) of \cref{defn:protoWald cat}. 

If we knew that the three given maps $D_{i,j}\twoheadrightarrow A_{i,j}$, $D_{i+1,j}\twoheadrightarrow A_{i+1,j}$ and $D_{i,j+1}\twoheadrightarrow A_{i,j+1}$ were vertical isomorphisms, then by \cref{rmk:isostable} these would give an invertible morphism of spans. Hence, this would ensure that the induced map $D_{i+1,j+1}\twoheadrightarrow X$ is also a vertical isomorphism (since $s$ preserves isomorphisms, by functoriality), and so the composite $D_{i+1,j+1}\twoheadrightarrow X\twoheadrightarrow A_{i+1,j+1}$ would be an isomorphism as well. This is indeed the case, 
as the corresponding maps are isomorphisms in the first inductive step (in fact, they are identities $A_{01}\to A_{01}$, $A_{02}\to A_{02}$ and $O\to O$).  

Thus $\tau_A\colon F_n U_n(A)\to A$ is a vertical natural transformation which is pointwise a vertical isomorphism; by \cref{rmk:isostable} this implies that each $\tau_A$ is invertible, and hence the natural transformation $\tau$ is itself invertible, as desired. 
\end{proof}

We now show the analogous lemma for $\cat V_n \dcat C$. One might hope that, by symmetry, the claim is immediate from \cref{lem:forget cokernels}. However, as highlighted in the following proof, the situation is more subtle; this lack of symmetry is discussed further in \cref{rmk:stablealternatives}.

\begin{lemma}\label{lem:forget kernels}
      Let $\dcat C$ be an isostable squares category. 
      Then, for each $n\geq 0$, the forgetful functor $U_n\colon S^{\square}_n \dcat C\to \cat V_n \dcat C$ that takes an object to its rightmost column is an equivalence of categories.
\end{lemma}\begin{proof}
    The proof is essentially the same as that of \cref{lem:forget cokernels}; the main difference lies in the fact that we must use the cospan completions given by the section $t$ in condition (iii) of \cref{defn:stable squares cat}, instead of span completions. 
    
    The functor $t$ allows us to construct a section functor $G_n\colon \cat V_n\dcat C\to S^{\square}_n\dcat C$ analogous to the functor $F_n$ in the proof of \cref{lem:forget cokernels}. The fact that $G_n$ takes maps in $\cat V_n\dcat C$ to maps in $S^{\square}_n\dcat C$ is less evident than its counterpart statement about $F_n$. To illustrate this, note for instance that given a map $f\colon A\to A'$ in $\cat V_n\dcat C$, the first inductive step in the definition of $G_n f$ uses the cospan completion functor $t$ to produce a cube
    \[\begin{tikzcd}[bo column sep, bo row sep]
  & Y'\ar[rr,>->]\ar[dd,->>] && A'_{n-2,n}\ar[dd,->>]\\
  Y\ar[ur,->>,gray,dashed]\ar[rr,>->,crossing over]\ar[dd,->>] && A_{n-2,n}\ar[ur,->>] &\\
  & O\ar[rr,>->] && A'_{n-1,n}\ar[dd,->>]\\
   O\ar[ur,equal]\ar[rr,>->] && A_{n-1,n}\ar[ur,->>]\ar[dd,->>] \ar[from=uu,->>, crossing over]&\\
   &&& O\\
&& O &\\
\end{tikzcd}\] However, even knowing that $A_{n-1,n}\twoheadrightarrow A'_{n-1,n}$ and $A_{n-2,n}\twoheadrightarrow A'_{n-2,n}$ are weak equivalences, general abstract nonsense is not enough to guarantee that the map $Y\twoheadrightarrow Y'$ is also a weak equivalence (in contrast to the span completion scenario). In this case, the fact that the morphism of the cospans is a pointwise vertical isomorphism implies it is an isomorphism of cospans (using \cref{rmk:isostable}) and consequently $t$ must send this morphism to an isomorphism of squares. Hence $Y\to Y'$ is also a vertical isomorphism. 

We still have that $U_n G_n=\id$, and we can construct a natural transformation $\eta\colon G_n U_n\Rightarrow\id$ analogous to the one defined in the proof of \cref{lem:forget cokernels}, using the natural transformation $u\colon tj^*\Rightarrow\id$ from condition (iv) of \cref{defn:stable squares cat}. Once again, we can show that $\eta$ is invertible, using the functoriality of $t$ and the fact that each vertical morphism $u_A$ is a weak equivalence, which is why this additional condition is required in \cref{defn:stable squares cat}.
\end{proof}

\begin{remark}\label{rmk:stablealternatives}
Our definition of stability was engineered for the two lemmas above to hold since, as we will soon see, they are instrumental in our strategy to obtain 2-Segal objects from squares categories. However, other avenues could be pursued to obtain these results as well: 
\begin{itemize}
    \item A detailed study of the proof of \cref{lem:forget kernels} reveals that the section functor $t$ is only applied to the subcategory $w^v\Fun^v\left(\tinycospan, \dcat C\right)$ of vertical natural transformations which are pointwise weak equivalences. With this in mind, one could modify condition (iii) in \cref{defn:stable squares cat} and ask for a section $t$ to the functor $j^*\colon w^v\Fun^v\left(\tinysquare, \dcat C\right)\to w^v\Fun^v \left(\tinycospan, \dcat C\right)$ instead. 
    \item A stability definition where conditions (i), (ii) are truly dual to (iii), (iv) would likely consider horizontal natural transformations between the cospan and square diagrams, instead of vertical ones. In this approach, it would not be necessary to add the requirement that the natural transformation $u$ is pointwise a weak equivalence, as this would be ensured by construction just as it is for $w$. The reason we choose not to do this is because we would not be able to prove \cref{lem:forget kernels} where each staircase in the $S^{\square}_\bullet$-construction is determined by its rightmost column, since our definition of $S^{\square}_\bullet$ has vertical natural transformations as its morphisms (see \cref{defn:S-dot of cat w squares}). Instead, following this approach would require us to define a horizontal version of $S^{\square}_\bullet$ as well, which would now be determined by its rightmost column whenever horizontal weak equivalences are isomorphisms, and then comparing the vertical and horizontal versions of $S^{\square}_\bullet$. These versions should agree as long as horizontal and vertical weak equivalences are in a bijective correspondence with each other in a way that is compatible with the squares in the double category. This holds, for instance, for any squares category arising from an ECGW-category \cite{SS-CGW}.
    \item The additional condition that $u$ is pointwise a weak equivalence would hold if the squares in $\dcat C$ satisfied the following property: in the picture below, whenever the diagram on the right and the outer diagram are squares in $\dcat C$, then so is the diagram on the left.
    \[\begin{tikzcd}
        A\rar[>->]\dar[->>] & B\rar[>->]\dar[->>] & C\dar[->>]\\
        A'\rar[>->] & B'\rar[>->] & C'
    \end{tikzcd}\] If so, we could do an argument similar to the one in \cref{rmk:weakequivsforfree}. This property is true, for instance, if the squares in $\dcat C$ are the cartesian squares in some ambient category. In this case, the assumption that $\dcat C$ is isostable is also automatic, as identity squares are always squares in any double category.
\end{itemize}
\end{remark}

We now follow the proof of \cite[Proposition 2.4.8]{dyckerhoff/kapranov:19} to show that the $S^{\square}_\bullet$-construction of an isostable squares category produces a $2$-Segal space. The idea is to use the comparison of $S^{\square}_n\dcat C$ with $\cat H_n \dcat C$ and $\cat V_n \dcat C$, and show the desired equivalences for these categories instead. The following two lemmas are the equivalences we will need.

\begin{lemma}\label{lem:lem for V}
    The map\[
   B\cat V_n(\dcat C)\to B\cat V_{\{0,\dots, n-2, n\}}(\dcat C) \times^{h}_{B\cat V_{\{n-2,n\}}(\dcat C)} B\cat V_{\{n-2, n-1,n\}}(\dcat C)
    \] is a homotopy equivalence for all $n\geq 3$.
\end{lemma}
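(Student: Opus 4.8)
The plan is to recognize this statement as the $(i,j)=(n-2,n)$ instance of the $2$-Segal pullback square of \cref{defn:2Segal} for the simplicial category $\cat V_\bullet\dcat C$, and to prove it by the groupoidal strategy of \cite[Proposition 2.4.8]{dyckerhoff/kapranov:19}. The essential point is that, although $B(-)$ does not preserve homotopy pullbacks in general (cf.\ \cref{rmk:Tdot1segalincat}), it \emph{does} turn a strict $1$-categorical pullback of groupoids along an isofibration into a homotopy pullback, and isostability guarantees that all four corners are groupoids. First I would record this groupoid claim: a morphism in any $\cat V_k\dcat C$ is a vertical natural transformation valued pointwise in $w^v\dcat C$, hence pointwise invertible since $\dcat C$ is isostable; by \cref{rmk:isostable} such a pointwise-invertible transformation is itself invertible. (This is exactly the content making $S^{\square}_\bullet\dcat C$ a simplicial groupoid in \cref{lem:isostable gives gpd}, transported along the equivalence of \cref{lem:forget kernels}.)

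Next I would form the strict pullback $P := \cat V_{\{0,\dots,n-2,n\}}\dcat C \times_{\cat V_{\{n-2,n\}}\dcat C} \cat V_{\{n-2,n-1,n\}}\dcat C$ and show that the canonical comparison functor $\cat V_n\dcat C\to P$ is an isomorphism of categories. The crucial observation is that every face map appearing in \emph{this} square is ``nerve-like'': the map to $\cat V_{\{0,\dots,n-2,n\}}\dcat C$ drops the interior vertex $n-1$ and composes, while the maps onto $\cat V_{\{n-2,n-1,n\}}\dcat C$ and $\cat V_{\{n-2,n\}}\dcat C$ drop initial vertices. In particular the special face $d_n$, which alone uses cospan completion and involves a choice, never enters. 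Thus an object of $P$ is a chain on the vertices $\{0,\dots,n-2,n\}$ together with a chain on $\{n-2,n-1,n\}$ agreeing on the overlap $\{n-2,n\}$; since these index sets cover $\{0,\dots,n\}$ and meet exactly in $\{n-2,n\}$, such a compatible pair assembles uniquely into a chain on $\{0,\dots,n\}$. The same gluing works on morphisms (ladders of vertical weak equivalences), so $\cat V_n\dcat C\to P$ is bijective on objects and morphisms.

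Then I would verify that one leg of the cospan, say $q\colon\cat V_{\{n-2,n-1,n\}}\dcat C\to\cat V_{\{n-2,n\}}\dcat C$, is an isofibration of groupoids. Given a $2$-simplex $B_{n-1}\quot B_n\quot O$ and an isomorphism $\phi\colon B_{n-1}\xrightarrow{\cong} C_{n-1}$ in $\cat V_{\{n-2,n\}}\dcat C$ (the component at the terminal vertex $O$ being forced to be $\id_O$, as $O$ is terminal in $\cat V_{\dcat C}$), one lifts $\phi$ to an isomorphism of $2$-simplices by keeping $B_n$ and $O$ and declaring the new edge $C_{n-1}\quot B_n$ to be $(B_{n-1}\quot B_n)\circ\phi^{-1}$; the requisite square commutes by construction, and invertibility of $\phi$ makes the lift an isomorphism. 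Finally I would invoke the standard facts that the nerve of an isofibration between groupoids is a Kan fibration, that $B$ preserves strict pullbacks, and that a strict pullback of spaces along a fibration computes the homotopy pullback. Combined with the isomorphism $\cat V_n\dcat C\cong P$, this yields the desired equivalence $B\cat V_n\dcat C\xrightarrow{\simeq}B\cat V_{\{0,\dots,n-2,n\}}\dcat C\times^{h}_{B\cat V_{\{n-2,n\}}\dcat C}B\cat V_{\{n-2,n-1,n\}}\dcat C$.

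The main obstacle is not any single calculation but making sure that $B$ actually computes the homotopy pullback: this rests on the two structural inputs above working in tandem, namely that the comparison to the strict pullback is an honest isomorphism (which forces me to confirm that no cospan-completion face enters, so that the reassembly is strict rather than merely up to the choices in $t$) and that one leg is a genuine isofibration (which is where terminality of $O$ and invertibility of the weak equivalences are used). The delicate bookkeeping is therefore in the second paragraph's vertex-covering argument together with the explicit lift in the third.
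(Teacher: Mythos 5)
Your proposal is correct, but it takes a genuinely different route from the paper's. The paper compares $\cat V_n(\dcat C)$ with the projective $2$-limit $\cat V_{\{0,\dots,n-2,n\}}(\dcat C)\times^{(2)}_{\cat V_{\{n-2,n\}}(\dcat C)}\cat V_{\{n-2,n-1,n\}}(\dcat C)$, whose objects carry a span of vertical isomorphisms as gluing data; the comparison functor $\psi_n$ into this pseudo-pullback is then only an equivalence, which the paper verifies by hand (a section $q_n$ with $q_n\circ\psi_n=\id$ and a natural transformation $\id\Rightarrow\psi_n\circ q_n$, giving an equivalence after realization by the realization lemma), and it then cites Dyckerhoff--Kapranov's result that projective $2$-limits of groupoids model homotopy limits. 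You instead strictify: you show the comparison into the strict $1$-categorical pullback is an \emph{isomorphism} of categories, which is correct and rests on exactly the two points you isolate --- the cospan-completion face $d_n$ never occurs in this particular $2$-Segal square (the terminal vertex $n$ lies in all four index sets, so it is never deleted), and terminality of $O$ in $\cat V_{\dcat C}$ makes the overlap condition an equality of objects rather than extra data --- and you then conclude via the fibrancy package (isofibration of groupoids, Kan fibration of nerves, Serre fibration after realization, pullback along a fibration is a homotopy pullback). Both arguments run on the same fuel, namely that isostability makes every $\cat V_k(\dcat C)$ a groupoid; but yours trades the paper's explicit homotopy inverse for a strictness observation plus one fibration check, which makes more transparent where terminality of $O$ and invertibility of the weak equivalences enter. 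What the paper's pseudo-pullback template buys is uniformity: it is the same scaffolding reused for the companion result \cref{lem:lem for H}, where the bookkeeping is substantially messier.

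Two harmless imprecisions, neither a gap. First, the leg $\cat V_{\{n-2,n-1,n\}}(\dcat C)\to\cat V_{\{n-2,n\}}(\dcat C)$ drops the \emph{interior} vertex $n-1$ (it is a $d_1$, composing across the middle), not an initial vertex; your isofibration lift nonetheless treats it correctly. Second, for the groupoid claim one needs slightly more than pointwise invertibility of the components: the inverse ladder must again have components in $w^v\dcat C$, i.e.\ $w^v\dcat C$ must be closed under inverses, and this is exactly what the square-inversion condition of \cref{defn:isostable} supplies (apply it to the defining square of a vertical weak equivalence). Your citation of \cref{rmk:isostable} and \cref{lem:forget kernels} does cover this, but the operative mechanism here is that closure, rather than the horizontal-naturality subtlety that \cref{rmk:isostable} is really about.
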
\begin{proof}
    We will show that the functors\[
   \psi_n\colon \cat V_n(\dcat C)\to \cat V_{\{0,\dots, n-2, n\}}(\dcat C) \times^{(2)}_{\cat V_{\{n-2,n\}}(\dcat C)} \cat V_{\{n-2, n-1,n\}}(\dcat C)
    \] are equivalences after realization for all $n\geq 3$. The target of $\psi_n$ is a projective $2$-limit of categories, described as follows (see \cite[Definition 1.3.6]{dyckerhoff/kapranov:19} for a general definition): \begin{itemize}
        \item An object is the data of objects \begin{align*}
            C_1\quot\dots\quot C_{n-2} \quot 0 &\in \cat V_{n-1}(\dcat C),\\
            D_{n-2}\quot D_{n-1}\quot 0 &\in \cat V_{2}(\dcat C)\\
            C\quot 0 \in \cat V_1(\dcat C)
        \end{align*}
        along with a span of vertical isomorphisms $\begin{tikzcd}
            C_{n-2} & C \ar[r, ->>, "\cong"] \ar[l, ->>, swap, "\cong"] & D_{n-2}.
        \end{tikzcd}$ 
        \item A morphism $(f_*, g_*, h)\colon (C_*, D_*, C)\to (C_*', D_*', C')$ is the data of morphisms $f_* \colon C_*\quot C_*'\in \cat V_{n-1}(\dcat C)$, $g_*\colon D_*\quot D_*'\in \cat V_{2}(\dcat C)$, and $h\colon C\to C'\in \cat V_1(\dcat C)$ so that the diagram\[
        \begin{tikzcd}
            C_{n-2} \ar[d, ->>, "f_j"] & C \ar[r, ->>, "\cong"] \ar[l, ->>, swap, "\cong"]\ar[d, ->>] & D_{n-2} \ar[d, ->>, "g_j"] \\
            C_{n-2}' & C' \ar[r, ->>, swap, "\cong"] \ar[l, ->>, "\cong"] & D_{n-2}'
        \end{tikzcd}
        \] commutes. 
    \end{itemize}
    The projective $2$-limit models the homotopy limit when the categories involved are groupoids (c.f. \cite[Proposition 1.3.8]{dyckerhoff/kapranov:19}) and hence it suffices to show $\psi_n$ induces an equivalence after geometric realization, as then\[
    B\cat V_n(\dcat C)\xrightarrow{\sim} B\left(\cat V_{\{0,\dots,n-2,n\}}(\dcat C) \times^{(2)}_{\cat V_{\{n-2,n\}}(\dcat C)} \cat V_{\{n-2,n-1,n\}}(\dcat C)\right) \xrightarrow{\sim} B\cat V_{n-1}(\dcat C) \times^{h}_{B\cat V_{1}(\dcat C)} B\cat V_{2}(\dcat C),
    \] since each $\cat V_k(\dcat C)$ is a groupoid. 

    We can represent an object $(C_*, D_*, C)$ as a commutative diagram\[
    \begin{tikzcd}
        && C_1 \ar[d, ->>] \\
        && \vdots \ar[d, ->>] \\
        D_{n-2} \ar[d, ->>] & C \ar[r, ->>,  "\cong"] \ar[l, ->>, swap, "\cong"] \ar[dd, ->>] & C_{n-2} \ar[dd, ->>] \\
        D_{n-1} \ar[d, ->>] &&\\
        0 & 0 \ar[r, equal] \ar[l, equal] & 0
    \end{tikzcd}.
    \] From such a diagram, we produce an element of $\cat V_n(\dcat C)$ as \[
    0 \quot C_1\quot \dots \quot C_{n-2}\quot D_{n-1} \quot O
    \] where the penultimate morphism is the composition $\begin{tikzcd}
        C_{n-2} \ar[r, ->>, "\cong"] & C \ar[r, ->>, "\cong"] & D_{n-2} \ar[r, ->>] & D_{n-1}
    \end{tikzcd}$. It is straightforward to check that this assignment extends to a functor \[q_n\colon \cat V_{\{0,\dots,n-2,n\}}(\dcat C) \times^{(2)}_{\cat V_{\{n-2,n\}}(\dcat C)} \cat V_{\{n-2,n-1,n\}}(\dcat C)\to \cat V_n(\dcat C)\] such that $q_n\circ \psi_n=\id$ and there is a natural transformation $\id\Rightarrow \psi_n\circ q_n$. The component of this natural transformation on an object $(C_*, D_*, C)$ is the morphism with $f_*=\id$, $g_*$ the identity everywhere except the first component where it is the composition $D_{n-2}\xrightarrow{\cong} C \xrightarrow{\cong} C_{n-2}$, and $h$ is given by the commutative diagram\[
        \begin{tikzcd}
            C_{n-2} \ar[d, equal] & C \ar[r, ->>, "\cong"] \ar[l, ->>, swap, "\cong"]\ar[d, ->>, "\cong"] & D_{n-2} \ar[d, ->>, "\cong"] \\
            C_{n-2} & C_{n-2} \ar[r, equal] \ar[l, equal] & C_{n-2}
        \end{tikzcd}.
        \] The existence of such a natural transformation implies $\psi_n$ is an equivalence after taking classifying spaces, and so we obtain the desired result.
\end{proof}

\begin{lemma}\label{lem:lem for H}
     The map\[
   B\cat H_n(\dcat C)\to B\cat H_{\{0,1,2\}}(\dcat C) \times^{h}_{B\cat H_{\{0,2\}}(\dcat C)} B\cat H_{\{0,2,\dots,n\}}(\dcat C)
    \] is an equivalence for all $n\geq 3$.
\end{lemma}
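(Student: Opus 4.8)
The plan is to mirror the proof of \cref{lem:lem for V}, interchanging the roles of $\cat V$ and $\cat H$, of cospan and span completions, and of the decomposition $(n-2,n)$ with $(0,2)$. Because $\dcat C$ is isostable, $S^{\square}_\bullet\dcat C$ is a simplicial groupoid by \cref{lem:isostable gives gpd}, and by \cref{lem:forget cokernels} each $U_n\colon S^{\square}_n\dcat C\to \cat H_n\dcat C$ is an equivalence of categories; hence every $\cat H_n\dcat C$ is a groupoid. As in \cref{lem:lem for V}, the projective $2$-limit then models the homotopy limit (c.f. \cite[Proposition 1.3.8]{dyckerhoff/kapranov:19}), so it suffices to produce a functor
\[
\psi_n\colon \cat H_n(\dcat C)\to \cat H_{\{0,1,2\}}(\dcat C)\times^{(2)}_{\cat H_{\{0,2\}}(\dcat C)}\cat H_{\{0,2,\dots,n\}}(\dcat C)
\]
that is an equivalence after realization, and to establish this by exhibiting an explicit quasi-inverse and a natural transformation, invoking the realization lemma at the end.

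The functor $\psi_n$ sends a sequence $O\cof A_1\cof\dots\cof A_n$ to the triple of restrictions $O\cof A_1\cof A_2$, $O\cof A_2\cof\dots\cof A_n$, and $O\cof A_2$, equipped with identity vertical isomorphisms. An object of the target is the data of $O\cof A_1\cof A_2\in\cat H_2$, $O\cof B_2\cof\dots\cof B_n\in\cat H_{n-1}$, and $O\cof C\in\cat H_1$, together with a span of vertical isomorphisms $A_2\xleftarrow{\cong}C\xrightarrow{\cong}B_2$; morphisms are compatible triples. One useful observation to record here is that all of the face maps appearing in this $(0,2)$-decomposition are the nerve-like faces $d_i$ with $i\geq 1$; none of them is the span-completion face $d_0$ of $\cat H_\bullet\dcat C$, so the exotic part of the simplicial structure never interferes.

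Next I would build the quasi-inverse $q_n$. Given an object as above, I keep the head $O\cof A_1\cof A_2$ fixed and transport the tail $B_3\cof\dots\cof B_n$ across the composite vertical isomorphism $\gamma\colon B_2\xrightarrow{\cong}A_2$. Concretely, completing the span
\[
\begin{tikzcd}
B_2\ar[r,>->]\ar[d,->>,"\gamma"',"\cong"] & B_3\\
A_2 &
\end{tikzcd}
\]
with the section $s$ of condition (i) of \cref{defn:protoWald cat} produces a horizontal map $A_2\cof \tilde B_3$ together with a vertical map $B_3\twoheadrightarrow\tilde B_3$; iterating along the chain yields $A_2\cof\tilde B_3\cof\dots\cof\tilde B_n$, and $q_n$ outputs $O\cof A_1\cof A_2\cof\tilde B_3\cof\dots\cof\tilde B_n$. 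This is exactly where the argument departs from \cref{lem:lem for V}: there the chain and the gluing isomorphisms were both vertical, so the gluing was a mere composition, whereas here a horizontal chain must be transported across a vertical isomorphism. The bulk of the work is therefore to check that this transport is well-behaved — that the squares produced by $s$ are of the correct type, that $q_n$ is functorial (from functoriality of $s$), and, most importantly, that each transported vertical map $B_j\twoheadrightarrow\tilde B_j$ is again an isomorphism. This last point is where isostability is essential: since $\gamma$ is a vertical isomorphism, the span it defines is isomorphic (via \cref{rmk:isostable}) to an identity span, and functoriality of $s$ forces $s$ to send this to an isomorphism of squares, exactly as in the proof of \cref{lem:forget cokernels}. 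I expect this transport-and-isomorphism bookkeeping to be the only genuine obstacle; everything else is formally parallel to \cref{lem:lem for V}.

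Finally I would verify the two composites. Applying $q_n$ after $\psi_n$ transports the tail across the identity isomorphism $A_2=A_2$, which returns the original sequence, giving $q_n\psi_n=\id$ (and at worst a natural isomorphism, which is all that is needed). In the other direction, the isomorphisms $\tilde B_j\cong B_j$ produced by the transport, together with $\gamma^{-1}\colon A_2\cong B_2$ and the given $C\cong A_2$, assemble into a natural isomorphism $\id\Rightarrow\psi_n q_n$; the compatibility squares needed in the middle factor are precisely the completion squares produced by $s$, so they are of the correct type. By the realization lemma this shows $\psi_n$ is an equivalence after taking classifying spaces, and since the categories involved are groupoids the projective $2$-limit computes the homotopy limit, yielding the desired homotopy equivalence.
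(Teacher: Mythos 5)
Your proposal is correct and follows essentially the same route as the paper's proof: reduction to the projective $2$-limit using the groupoid property from isostability, a quasi-inverse built by transporting the tail chain across the vertical isomorphism via iterated span completions with $s$ (precisely because horizontal and vertical morphisms cannot be composed), and natural transformations in both directions fed into the realization lemma. The only slight overstatement is the on-the-nose equality $q_n\psi_n=\id$ --- the section $s$ need not return the identity completion when the span has an identity leg --- but your hedge that a natural isomorphism suffices covers this, matching the paper's construction of a natural transformation $\id\Rightarrow r_n\circ\phi_n$.
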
\begin{proof}
    The proof for $\cat H_n(\dcat C)$ follows a similar idea as for $\cat V_n(\dcat C)$, but the arguments are more complicated as we need to accommodate the double categorical structure present in $\cat H_n(\dcat C)$. We are crucially making use of the fact that $\dcat C$ is isostable to ensure that each $\cat H_k (\dcat C)$ is a groupoid (see the argument in \cref{lem:isostable gives gpd} as well as \cref{rmk:isostable}). 
    
    It again suffices to show that the functors \[
   \phi_n\colon \cat H_n(\dcat C)\to \cat H_{\{0,1,2\}}(\dcat C) \times^{(2)}_{\cat H_{\{0,2\}}(\dcat C)} \cat H_{\{0,2,\dots, n\}}(\dcat C)
    \] are equivalences. In this case, an object of the target category is of the form\[
        \begin{tikzcd}
        0 \ar[r, >->] \ar[d, equal] \ar[rrd, phantom, "\square"] & C_1 \ar[r, >->] & C_2 & &\\
        0 \ar[d, equal] \ar[rr, >->] \ar[rrd, phantom, "\square"] & & C \ar[d, ->>, "\cong"]  \ar[u, ->>, swap, "\cong"] \\
        0 \ar[rr, >->] & & D_2 \ar[r, >->]& \dots\ar[r, >->]  & D_n
    \end{tikzcd}
        \] and a morphism $(f_*, g_*, h)\colon (C_*, C, D_*)\to (C_*', D_*', C')$ consists of $f_*\in \cat H_2(\dcat C)$, $g_*\in \cat H_{n-2}(\dcat C)$, and $h\in \cat H_1(\dcat C)$ such that \[
        \begin{tikzcd}
            C_2 \ar[d, ->>, "f_j"] & C \ar[r, ->>, "\cong"] \ar[l, ->>, swap, "\cong"]\ar[d, ->>, "h"] & D_2 \ar[d, ->>, "g_j"] \\
            C_2' & C' \ar[r, ->>, swap, "\cong"] \ar[l, ->>, "\cong"] & D_2'
        \end{tikzcd}
        \] commutes.
        
    To see that $\phi_n$ is an equivalence, we again construct a functor in the other direction which will be a homotopy inverse for $\phi_n$ after realization. Unlike in $\cat V_n(\dcat C)$, we cannot make use of \[\begin{tikzcd}
        C_2 \ar[r, ->>, "\cong"] & C \ar[r, ->>, "\cong"] & D_2 \ar[r, >->] & D_{3}
    \end{tikzcd}\]since we cannot compose the horizontal and vertical morphisms. However, given an element in $\cat H_{\{0,1,2\}}(\dcat C) \times^{(2)}_{\cat H_{\{0,2\}}(\dcat C)} \cat H_{\{0,2,\dots,n\}}(\dcat C)$, we obtain a diagram\[
    \begin{tikzcd}
        0 \ar[rr, >->]\ar[d, equal] \ar[rrd, phantom, "\square"] & & D_2 \ar[r, >->] \ar[d, ->>, "\cong"] & \dots\ar[r, >->]  & D_n\\
        0 \ar[d, equal] \ar[rr, >->] \ar[rrd, phantom, "\square"] & & C \ar[d, ->>, "\cong"]\\
        0 \ar[r, >->]  & C_1 \ar[r, >->] & C_2 & &
    \end{tikzcd}
    \] by inverting the given isomorphism in $\cat H_1(\dcat C)$ between $0\cof D_2$ and $0\cof C$. Now complete the bottom row to $0\cof C_1\cof C_2 \cof C_3'\cof \dots \cof C_n'$ where $C_i'$ is inductively constructed by completing the span\[
    \begin{tikzcd}
        D_{i-1}\ar[r, >->] \ar[d, ->>, swap, "\cong"] & D_{i}\\
        C_{i-1}' & 
    \end{tikzcd}
    \] for $3\leq i\leq n$, with $C_2'=C_2$. This assignment on objects extends to a functor $r_n\colon \cat H_{\{0,1,2\}}(\dcat C) \times^{(2)}_{\cat H_{\{0,2\}}(\dcat C)} \cat H_{\{0,2,\dots,n\}}(\dcat C)\to \cat H_n(\dcat C)$; the components of the natural transformation $r_n(f_*, g_*, h)$ which are not identities are produced by the span-completion functor $s$.

    We claim that there are vertical natural transformations $\id\Rightarrow \phi_n\circ r_n$ and $\id\Rightarrow r_n\circ \phi_n$, which completes the proof, as it implies that $r_n$ and $\phi_n$ are homotopy inverses after geometric realization. A component of the first natural transformation is given by the morphism\[
    \begin{tikzcd}
       && 0 \ar[rr, >->] \ar[dddd, gray, bend right, equal] && D_2 \ar[r, >->] \ar[dddd, gray, ->>] \ar[ddddr, phantom, "\square"] & D_3 \ar[r, >->] \ar[dddd, gray, ->>, "\cong"] \ar[ddddr, phantom, "\square"] & \cdots \ar[r, >->] \ar[dddd, phantom, "\dots"] \ar[ddddr, phantom, "\square"] & D_n \ar[dddd, gray, ->>, "\cong"] \\
       & 0 \ar[rr, >->,crossing over]\ar[dl, equal] \ar[ur, equal] \ar[dddd, gray, bend right, equal] && C \ar[dl, ->>, "\cong"] \ar[ur, ->>, "\cong"]  &&&&\\
       0\ar[r, >->,crossing over] \ar[dddd, gray, bend right, equal] &C_1\ar[r, >->,crossing over]  & C_2 &&&&&\\
       &&&&&&&\\
       && 0 \ar[rr, >->]  && C_2 \ar[r, >->] & C_3'  \ar[r, >->] & \cdots \ar[r, >->] & C_n'\\
       & 0 \ar[rr, >->]\ar[dl, equal] \ar[ur, equal] && C_2 \ar[dl, equal] \ar[ur, equal] \ar[from=uuuu, gray, bend right=10, ->>, "\cong", crossing over]&&&&\\
       0\ar[r, >->]  &C_1\ar[r, >->]\ar[from=uuuu, gray, bend right, equal,crossing over] & C_2 \ar[from=uuuu, gray, bend right, equal,crossing over]&&&&&\\
    \end{tikzcd}
    \] and naturality is again ensured by the span-completing functor $s$. 

    For $\id\Rightarrow r_n\circ \phi_n$, a component is given by\[
    \begin{tikzcd}
        0 \ar[r, >->] \ar[d, gray, equal] & C_1\ar[r, >->] \ar[d, gray, equal] & C_2 \ar[r, >->] \ar[d, gray, equal]& C_3\ar[r, >->] \ar[d, gray, ->>, "\cong"] & \cdots \ar[r, >->] \ar[d, phantom, "\cdots"] & C_n\ar[d, gray, ->>, "\cong"]\\
        0\ar[r, >->] & C_1 \ar[r, >->]& C_2 \ar[r, >->]& C_3' \ar[r, >->]& \cdots \ar[r, >->]& C_n'
    \end{tikzcd}
    \] where $C_i'$ is obtained inductively by completing the span\[
    \begin{tikzcd}
        C_{i-1} \ar[r, >->] \ar[d, ->>] & C_i \\
        C_{i-1}'  & 
    \end{tikzcd}
    \] (with $C_2'=C_2$). It is straightforward to check that these components assemble into a natural transformation, again using properties of $s$.
\end{proof}

\begin{theorem}\label{thm:2Segal from squares}
    If $\dcat C$ is an isostable squares category, then $[n]\mapsto BS_n(\cat C)$ is a $2$-Segal space.
\end{theorem}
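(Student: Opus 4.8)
The plan is to reduce the $2$-Segal condition to the two special cases permitted by \cref{rmk:check 2Segal} and then transport each one across the forgetful equivalences established in \cref{lem:forget cokernels} and \cref{lem:forget kernels}. Writing $X_n = BS^{\square}_n\dcat C$, it suffices by \cref{rmk:check 2Segal} to show that for every $n\geq 3$ the squares associated to $(i,j)=(0,2)$ and $(i,j)=(n-2,n)$ in \cref{defn:2Segal} are homotopy pullbacks. I would handle the first using the comparison with $\cat H_\bullet\dcat C$ and the second using the comparison with $\cat V_\bullet\dcat C$; the whole point is that these two squares involve disjoint halves of the face maps, matching exactly the faces that the respective forgetful functors preserve.

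For $(i,j)=(0,2)$, the four maps in the $2$-Segal square
\[
\begin{tikzcd}
X_n \ar[r]\ar[d] & X_{\{0,1,2\}} \ar[d]\\
X_{\{0,2,\dots,n\}}\ar[r] & X_{\{0,2\}}
\end{tikzcd}
\]
are built entirely out of the face maps $d_1,\dots,d_n$, never using $d_0$. I would observe that the top-row forgetful functor $U\colon S^{\square}_\bullet\dcat C\to\cat H_\bullet\dcat C$ strictly commutes with each of $d_1,\dots,d_n$, since deleting the $i$th row and column of a staircase (for $i\geq 1$) restricts on the top row to the nerve-style face of $\cat H_\bullet\dcat C$; only $d_0$, which is defined by span completion, fails to commute. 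Hence $U$ intertwines the displayed square with the corresponding square for $\cat H_\bullet\dcat C$. Since $U$ is a pointwise equivalence of categories by \cref{lem:forget cokernels}, it induces an objectwise equivalence of the two cospans after applying $B$, so the $S^{\square}$-square is a homotopy pullback if and only if the $\cat H$-square is; and the latter is exactly the content of \cref{lem:lem for H}.

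For $(i,j)=(n-2,n)$, the argument is dual. The $2$-Segal square
\[
\begin{tikzcd}
X_n \ar[r]\ar[d] & X_{\{n-2,n-1,n\}} \ar[d]\\
X_{\{0,\dots,n-2,n\}}\ar[r] & X_{\{n-2,n\}}
\end{tikzcd}
\]
now uses only the faces $d_0,\dots,d_{n-1}$ and avoids $d_n$. These are precisely the nerve-style faces of $\cat V_\bullet\dcat C$, and the rightmost-column forgetful functor $U\colon S^{\square}_\bullet\dcat C\to\cat V_\bullet\dcat C$ commutes with all of them (only $d_n$, defined by cospan completion, is exceptional). Using that $U$ is a pointwise equivalence by \cref{lem:forget kernels}, the square transports to the corresponding square for $\cat V_\bullet\dcat C$, which is a homotopy pullback by \cref{lem:lem for V}. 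Together with the previous paragraph, this verifies both required cases.

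The hard part is not any single computation but the bookkeeping in the two middle paragraphs: one must check that the $(0,2)$ square genuinely avoids $d_0$ and the $(n-2,n)$ square genuinely avoids $d_n$, so that the forgetful functors---whose sections are \emph{not} simplicial, as noted after \cref{lem:forget cokernels}---can nonetheless be applied levelwise on exactly the faces they preserve. Once this matching is in place, invariance of homotopy pullbacks under objectwise equivalence of cospans does the rest.
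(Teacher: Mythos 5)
Your proposal is correct and follows essentially the same route as the paper: reduce to the cases $(i,j)=(0,2)$ and $(n-2,n)$ via \cref{rmk:check 2Segal}, and transport each square across the forgetful functors to $\cat H_\bullet\dcat C$ and $\cat V_\bullet\dcat C$ using \cref{lem:forget cokernels}, \cref{lem:forget kernels}, \cref{lem:lem for H}, and \cref{lem:lem for V}. The paper phrases the transport as a commutative square comparing the maps $\Phi_n$, $\Psi_n$ into projective $2$-limits (which model the homotopy pullbacks since, by \cref{lem:isostable gives gpd}, everything is a simplicial groupoid) with their $\cat H$/$\cat V$ counterparts, whereas you verify levelwise compatibility of the face maps explicitly---but this is exactly the commutativity the paper's diagrams rely on, so the arguments coincide.
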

\begin{proof}
We will show that the functors\begin{align*}
           \Phi_n\colon S^{\square}_n(\dcat C)&\to S^{\square}_{\{0,1,2\}}(\dcat C) \times^{(2)}_{S^{\square}_{\{0,2\}}(\dcat C)} S^{\square}_{\{0,2,\dots,n\}}(\dcat C)\\
           \Psi_n\colon S^{\square}_n(\dcat C)&\to S^{\square}_{\{0,\dots,n-2,n\}}(\dcat C) \times^{(2)}_{S^{\square}_{\{n-2,n\}}(\dcat C)} S^{\square}_{\{n-2,n-1,n\}}(\dcat C)
    \end{align*}
are equivalences after realization for all $n\geq 3$. Then, by the criterion given in \cref{rmk:check 2Segal}, the same argument as in the previous two lemmas shows that $[n]\mapsto BS^{\square}_n \dcat C$ is a $2$-Segal space, since $S^{\square}_\bullet(\dcat C)$ is a simplicial groupoid (\cref{lem:isostable gives gpd}). 

For $\Phi_n$, consider the commutative diagram\[
    \begin{tikzcd}
        S^{\square}_n(\dcat C) \ar[r, "\Phi_n"] \ar[d] & S^{\square}_{\{0,1,2\}}(\dcat C) \times^{(2)}_{S^{\square}_{\{0,2\}}(\dcat C)} S^{\square}_{\{0,2,\dots,n\}}(\dcat C) \ar[d] \\
        \cat H_n\dcat C \ar[r, swap, "\phi_n"] & \cat H_{\{0,2,1\}}\dcat C \times^{(2)}_{\cat H_{\{0,2\}}(\dcat C)} \cat H_{\{0,2,\dots, n\}}\dcat C
    \end{tikzcd}
    \] where the vertical maps are induced by the forgetful functor. By \cref{lem:forget cokernels} and \cref{lem:lem for H}, the vertical functors and bottom horizontal functors are equivalences after realization, and hence the top horizontal functor is as well.

    Similarly, for $\Psi_n$, consider the commutative diagram\[
    \begin{tikzcd}
        S^{\square}_n(\dcat C) \ar[r, "\Psi_n"] \ar[d] & S^{\square}_{\{0,\dots,n-2,n\}}(\dcat C) \times^{(2)}_{S^{\square}_{\{n-2,n\}}(\dcat C)} S^{\square}_{\{0,\dots,n-2\}}(\dcat C) \ar[d] \\
        \cat V_n\dcat C \ar[r, swap, "\phi_n"] & \cat V_{\{0,\dots, n-2, n\}}\dcat C \times^{(2)}_{\cat V_{\{n-2,n\}}(\dcat C)} \cat V_{\{n-2,n-1,n\}}\dcat C
    \end{tikzcd}
    \] where the vertical maps are induced by the forgetful functor, and apply \cref{lem:forget kernels} and \cref{lem:lem for V}.
\end{proof}

\begin{remark}
In particular, the above result (together with \cref{thm:square and S comparison}) tells us that there is a model for the algebraic $K$-theory of a stable squares category with isomorphisms which produces a 2-Segal space.
Note that in order to say this, a passage from $T_\bullet$ to $S^{\square}_\bullet$ (and the corresponding comparison) is truly required. Indeed, the realization of the $T_\bullet$-construction does not generally produce a 2-Segal space even when all weak equivalences are isomorphisms, as $T_\bullet(\dcat C)$ is not a simplicial groupoid. For this to happen, \emph{all vertical morphisms} in $\dcat C$ must be isomorphisms, which is not the case in any examples of interest. 
\end{remark}

\begin{example}
    The examples of finite sets (\cref{ex:FinSet}) and polytopes (\cref{ex:polytopes}) are isostable. Hence, by \cref{thm:2Segal from squares}, their $S^{\square}_\bullet$-constructions yield $2$-Segal spaces.
\end{example}

   To conclude  this paper, we summarize the comparison between some of the different double-categorical structures which were introduced in the current paper or previously present in the literature. First note that every stable pointed double category (introduced in \cite[Section 3]{BOORS:2016}) is an example of an isostable squares category (given in \cref{defn:isostable}). This claim follows from \cref{prop:stable dblcat is stable sqcat}, together with the observation that the weak equivalences in a stable pointed double category are identities (hence invertible), and that the square condition in \cref{defn:isostable} is guaranteed by \cref{rmk:easywaytogetisostable}, as by definition, each span in a stable pointed double category has a unique completion to a square.  Since every isostable squares category is a stable one (by definition), we have a chain of inclusions\[
    \boxed{\parbox{8em}{\centering pointed stable\\double categories}}  ~\subsetneq~  \boxed{\parbox{8em}{\centering isostable squares\\ categories}} ~\subsetneq~  \boxed{\parbox{8em}{\centering stable squares\\ categories}}.
   \] 
   Note that these are all proper inclusions. Indeed, any stable squares category whose weak equivalences are not isomorphisms cannot be isostable; and any isostable squares category in which squares are not \emph{uniquely} determined by their span and cospan cannot be pointed stable double categories, as is the case of the isostable squares categories of finite sets and of polytopes of \cref{ex:FinSet,ex:polytopes} whose (co)spans only determine the squares \emph{up to isomorphism}.

   The conditions required of an isostable squares category can then be interpreted as a more flexible version of the ones in a pointed stable double category, where now squares intuitively behave like pushouts and pullbacks, and well-behaved ``up to isomorphism'' constructions are allowed. On the one hand, this added flexibility has the consequence that we cannot claim to produce 2-Segal \emph{sets} from the $K$-theory of isostable squares categories; this is not surprising at all, as it was already understood in \cite{BOORS:2016} that pointed stable double categories are precisely the data required to produce 2-Segal sets. On the other hand, thanks to \cref{thm:2Segal from squares}, these double categories still fit nicely into this story, giving us a wider class of double-categorical inputs that produce 2-Segal \emph{spaces}.

\printbibliography
\end{document}